\documentclass[11pt,reqno]{amsart}
\usepackage[a4paper,margin=27mm]{geometry}
\usepackage[T1]{fontenc}
\usepackage{lmodern,microtype,amsmath,amssymb,mathtools,mathrsfs,enumitem}
\usepackage{xcolor}
\usepackage[colorlinks=true,linkcolor=blue!50!black,citecolor=blue!50!black,urlcolor=blue!50!black,pagebackref=true]{hyperref}
\renewcommand*{\backref}[1]{}
\renewcommand*{\backrefalt}[4]{}
\hypersetup{pdftitle={Finite-Time Singularities of the Kahler--Ricci Flow on Fano Bundles II},
pdfauthor={Wangjian Jian and Jian Song},
pdfsubject={Finite-time singularities in the analytic minimal model program}}
\newtheorem{theorem}{Theorem}[section]
\newtheorem{conjecture}{Conjecture}[section]
\newtheorem{lemma}{Lemma}[section]
\newtheorem{proposition}[lemma]{Proposition}
\newtheorem{corollary}{Corollary}[section]
\theoremstyle{definition}
\theoremstyle{remark}\newtheorem{remark}[lemma]{Remark}
\numberwithin{equation}{section}
\newcommand{\dd}{\mathrm d}
\DeclareMathOperator{\tr}{tr}

\renewcommand{\P}{\mathbb P}
\newcommand{\C}{\mathbb C}\newcommand{\R}{\mathbb R}\newcommand{\Q}{\mathbb Q}
\newcommand{\PP}{\mathbb P}\newcommand{\ddc}{\sqrt{-1}\partial\bar\partial}
\newcommand{\Ric}{\operatorname{Ric}}\newcommand{\Rm}{\operatorname{Rm}}
\newcommand{\Reg}{\operatorname{Reg}}
\newcommand{\Vol}{\operatorname{Vol}}\newcommand{\diam}{\operatorname{diam}}
\newcommand{\Var}{\operatorname{Var}}\newcommand{\Ent}{\operatorname{Ent}}
\newcommand{\supp}{\operatorname{supp}}
\newcommand{\Nash}{\mathcal N}\newcommand{\Ding}{\mathcal D}
\newcommand{\dV}{\,dV}\newcommand{\eps}{\varepsilon}
\newcommand{\Fconv}{\mathbb F}
\setlist[enumerate]{label=\textup{(\roman*)},leftmargin=2.2em}
\title[Finite-time singularities on Fano bundles II]{Finite-Time Singularities of the K\"ahler--Ricci Flow on Fano Bundles II}
\author{Wangjian Jian}
\address{State Key Laboratory of Mathematical Sciences and Institute of Mathematics, Academy of Mathematics and Systems Science,
Chinese Academy of Sciences, Beijing, 100190, China}
\email{wangjian@amss.ac.cn}
\thanks{Wangjian Jian is supported in part by NSFC grants 12422103, 12201610,
12371058, and 12288201; BJNSF grant JR25002; and National Key R\&D Program
of China grants 2023YFA1009900 and 2021YFA1003100.}
\author{Jian Song}
\address{Department of Mathematics, Rutgers University,
Piscataway, NJ 08854, USA}
\email{jiansong@math.rutgers.edu}
\thanks{Jian Song is supported in part by the National Science Foundation
grant DMS-2505575.}
\date{}
\subjclass[2020]{53E30, 32Q15, 14D06}
\keywords{K\"ahler--Ricci flow, finite-time singularity, Fano bundle, Ricci potential, K\"ahler--Einstein metric}
\begin{document}
\begin{abstract}
 The analytic minimal model program proposed in \cite{SongTianSingularities, JST} seeks to describe the
birational transitions and fibre collapsing of the K\"ahler--Ricci
flow through the geometry of its singularities. A fundamental conjecture of  \cite{JST} predicts Type I bounds for the scalar
curvature and diameter of every fibre contratced by the limiting cohomological class at the finite singular time.  This paper is a continuation of \cite{Splitting} that establishes the Type I conjecture for collapsing solutions on Fano bundles with arbitrary smooth Fano fibres. If the fibre
admits a smooth K\"ahler-Einstein metric, the Type I bound holds 
for the full curvature tensor. 

\end{abstract}
\maketitle
\setcounter{tocdepth}{1}
\tableofcontents
\section{Introduction}\label{ii:sec:introduction}

A central problem in the study of the K\"ahler--Ricci flow is to
understand how its singularities reflect the topology and geometry of the underlying space. The analytic minimal model program of
Song--Tian \cite{SongTianSurfaces,SongTianMeasures,SongTianSingularities} proposes
that the flow realize the birational operations of the minimal
model program and the collapse of Fano fibrations. Its refined
formulation \cite{JST} asks, in addition, for a description of the geometry at the singular scale. In the present paper we address the scalar-curvature and fibre-diameter bounds predicted in  \cite{JST}, and the limiting geometry when the collapsing
fibre admits a K\"ahler--Einstein metric.

Let $X$ be a  projective manifold. We consider
the maximal unnormalized K\"ahler--Ricci flow
\begin{equation}\label{ii:eq:flow}
\left\{
\begin{array}{l} \partial_t\omega(t)=-\Ric(\omega(t)),\qquad 0\le t<T<\infty, \\
\\
 \omega(0)=\omega_0,
\end{array}\right.
\end{equation}
where $T$ is the first singular time and $\omega_0$ is a smooth K\"ahler metric. The finite singular time $T$ only depends on the intial K\"ahler class $[\omega_0]$. We let $g(t)$ be the K\"ahler metric associated to $\omega(t)$ and let $R_g=R_\omega=\tr_\omega\Ric(\omega)$
be the scalar curvature of $g(t)$. 

Throughout this paper, we assume that $[\omega_0]\in H^2(X, \mathbb{Q})\cap H^{1,1}(X, \mathbb{R})$. The limiting class $[\omega_0]-2\pi T c_1(X)$ is semiample by Kawamata's base-point-free theorem, and 
it determines a surjective morphism 
$$
\Phi:X\to Y
$$
with connected fibres onto a normal projective variety $Y$. The morphism describes which directions are contracted, but by itself gives little information about the rate of contraction or the curvature near the singular time. These are the questions addressed by the
following conjecture proposed in \cite[Conjecture~1.1]{JST}. 

\begin{conjecture}
\label{ii:conj:JST}
Let \eqref{ii:eq:flow} $g(t)$ be the maximal solution of the K\"ahler-Ricci flow on $X\times [0, T)$. There exists a constant $C>0$ such that
\[
 \sup_X|R_{g(t)}|\le\frac{C}{T-t},\qquad
 \sup_{q\in Y}\diam_{(X,g(t))}\Phi^{-1}(q)
       \le C\sqrt{T-t},\qquad 0\le t<T. 
\]
\end{conjecture}

Here $\Phi^{-1}(q)$ is a connected fibre whose fibre diameter is measured in the ambient manifold $X$. The fibres themselves may be singular or reducible. When $Y$ is a
point, the conjecture reduces to Perelman's scalar-curvature and
diameter estimates for the Fano Ricci flow. For a positive-dimensional
base, it asks for the same parabolic scale to control every fibre,
with a single constant. It applies both to collapsing fibrations
and to birational contractions. The bound concerns scalar curvature, whereas bounds for the full curvature tensor only hold in special cases and must fail in general. 

The compact Fano case also provides a model for the limiting geometry.
For a Fano manifold admitting a smooth K\"ahler--Einstein metric,
Tian--Zhu \cite{TianZhuFlow,TianZhuFlowII} proved that the normalized
K\"ahler--Ricci flow starting from any K\"ahler metric in $2\pi c_1$
converges smoothly, modulo holomorphic automorphisms, to a
K\"ahler--Einstein metric. Thus the Hamilton--Tian limit in this case
is smooth and retains the original complex structure. Tian--Zhang
\cite{TianZhangActa} developed a regularity theory under integral
Ricci-curvature bounds and proved the Hamilton--Tian conjecture
in complex dimension at most three, obtaining soliton limits that
are smooth away from a set of real codimension at least four.
Convergence to a K\"ahler--Einstein metric under positive bisectional
curvature and suitable stability assumptions was established by
Phong--Song--Sturm--Weinkove \cite{PSSWPositive}.

An essential step towards this conjecture was taken in \cite{JST} with the introduction of localization for the Ricci potential and its 
\emph{Ricci vertices}. For a smooth closed representative $\theta_Y$
of the limiting base class, let $u_{\theta_Y}$ satisfy
\[
 \Ric(\omega(t))+\ddc u_{\theta_Y}
   =\frac{\omega(t)-\Phi^*\theta_Y}{T-t}.
\]
A global minimum of $u_{\theta_Y}(\cdot,t)$ is called a Ricci vertex
associated to $\theta_Y$. The Li--Yau and Harnack estimates of
\cite[Theorems~1.5--1.6]{JST} give scalar-curvature and diameter
bounds on tubes of base radius comparable to $\sqrt{T-t}$ around
such a vertex, whenever a larger tube has volume at most
$C(T-t)^n$. The same work establishes complex-analytic compactness
of the associated blow-ups by partial $C^0$ estimates
\cite[Section~8]{JST}. The remaining difficulty, for the estimates
on a prescribed fibre, is to choose a representative whose vertex
lies sufficiently close to that fibre.

This paper continues \cite{Splitting}, which establishes a splitting theorem for every tangent at a fixed limiting point over a Fano bundle \cite[Theorem~1.1]{Splitting}. In the case of one-dimensional fibres, the full Type I curvature and sharp
fibre-diameter estimates are proved \cite[Theorem~1.2]{Splitting}.  The goal of this paper is to establish Conjecture \ref{ii:conj:JST} for the Fano bundle and to further extend the Type I curvature bounds from $\mathbb{P}^1$-bundle to Fano bundles with K-polystabale fibres. Related results in complex dimension two
also seen the work of Xu--Zhang \cite{XuZhang,XuZhangSurfaces}
and Cifarelli--Conlon--Hallgren--Zhang \cite{CCHZ}.

\subsection{Type I bounds for scalar curvature and fibre diameter}

%For a Fano bundle, the horizontal estimates of Fu--Zhang \cite{FuZhang} already give the required control of tube volumes. The question is therefore one of locating Ricci vertices. The key observation is that the terminal base potential has bounded real Hessian. Its quadratic lower supports allow us to place a vertex within base distance $C\sqrt{T-t}$ of any fibre, which isprecisely the scale required by \cite[Theorem~1.6]{JST}.

Let  $X$ be a projective manifold of complex dimension $n$ and 
\begin{equation}\label{finitemap}
\Phi:X\to Y
\end{equation} be the surjective morphism induced by the limiting class of the K\"ahler-Ricci flow (\ref{ii:eq:flow}). 
Throughout this paper, we assume that $\Phi$ is a \emph{Fano bundle} over a smooth projective manifold  $Y$ of complex dimension $0<m<n$. More precisely, there is a Zariski-open
cover $\{U_\alpha\}$ of $Y$ and biregular isomorphisms
\[
 \Phi^{-1}(U_\alpha)\cong U_\alpha\times F
\]
over $U_\alpha$, where $F$ is a fixed smooth Fano
manifold of complex dimension $n-m$. Write $F_q=\Phi^{-1}(q)$.

\begin{theorem}\label{ii:thm:main}  Let $\omega(t)$ be the maximal solution of the K\"ahler-Ricci flow  
\eqref{ii:eq:flow} on $X\times [0, T)$. Suppose that $\Phi: X\rightarrow Y$ is a Fano bundle and the limiting cohomology class of the flow satisfies 
\begin{equation}\label{ii:eq:class}
 [\omega_0]-2\pi T c_1(X)=\Phi^*[\omega_Y],
\end{equation}
for  a K\"ahler form    $\omega_Y$ on $Y$ . Then Conjecture~\ref{ii:conj:JST} holds. More precisely, there is a constant $C>0$
such that, for all $0\le t<T$,
\begin{equation}\label{ii:eq:main}
 \sup_X|R_{g(t)}|\le\frac C{T-t},\qquad
 \sup_{q\in Y}\diam_{(X,g(t))}F_q\le C\sqrt{T-t}.
\end{equation}
The diameter is measured by the ambient distance in $(X, g(t))$.  
\end{theorem}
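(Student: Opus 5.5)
The plan follows the strategy outlined in the introduction: reduce both bounds to the local Li--Yau and Harnack estimates of \cite[Theorems~1.5--1.6]{JST}. For every fibre $F_q$ and time $t$, we will produce a representative $\theta_Y$ of $[\omega_Y]$ with two properties. First, a Ricci vertex of $u_{\theta_Y}(\cdot,t)$ lies over a point at base distance at most $C\sqrt{T-t}$ from $q$. Second, the tube volume hypothesis holds with constants independent of $q$ and $t$. Those theorems then bound $|R|$ by $C/(T-t)$ and the ambient diameter by $C\sqrt{T-t}$ on a tube of base radius comparable to $\sqrt{T-t}$ around the vertex. This tube contains $F_q$. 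Since the fibres exhaust $X$, uniformity in $q$ gives \eqref{ii:eq:main}.

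\emph{Step 1: volume control.} We write $\omega(t)=\Phi^*\omega_Y+(T-t)\chi+\ddc\varphi(t)$, where $\chi$ is a smooth form with $[\chi]=-2\pi c_1(X)$-adjusted so that $\chi$ restricts to a K\"ahler form of class $2\pi c_1(F)$ on each fibre. This is possible because $-K_X$ is $\Phi$-ample. We then invoke the standard parabolic Schwarz lemma and the $C^0$ bound $|\varphi|\le C$ with $|\dot\varphi|\le C$, together with the horizontal estimates of Fu--Zhang for Fano bundles. These give $\omega(t)\ge c\,\Phi^*\omega_Y$ and $\omega(t)^n\le C(T-t)^{n-m}\Phi^*\omega_Y^m\wedge\chi^{n-m}$. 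Integrating over $\Phi^{-1}(B_Y(q,r))$ then gives volume at most $C r^{2m}(T-t)^{n-m}$. At $r\sim\sqrt{T-t}$ this is $C(T-t)^n$, uniformly in $q$, as required.

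\emph{Step 2: locating vertices.} We set $\theta_Y=\omega_Y+\ddc\psi$. Then $u_{\theta_Y}=u_{\omega_Y}+\Phi^*\psi/(T-t)$ up to normalization, where $u_{\omega_Y}=-\dot\varphi\cdot$(suitable combination) $+\varphi/(T-t)$. The key input is that the fibrewise-averaged terminal potential $\varphi(T)$, which descends to $Y$, has bounded real Hessian on $Y$. We expect to obtain this from the Fano bundle structure: the limit of $\varphi$ is a $C^{1,1}$ function on $Y$, by the uniform $C^2$ base estimate together with $|\varphi(t)-\varphi(T)|\le C(T-t)$. Given this, we take $\psi=A\,d_Y(\cdot,q)^2$ in local holomorphic coordinates centred at $q$, cut off and adjusted to keep $\theta_Y$ K\"ahler. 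We use the $C^{1,1}$ bound on the potential to write $\varphi(T)$ as a quadratic lower support plus a nonnegative error near $q$. Then $u_{\theta_Y}(x,t)-u_{\theta_Y}(x_q,t)\ge (A-C)d_Y(\Phi(x),q)^2/(T-t)-C$ for a suitable $x_q\in F_q$. Hence any minimum point satisfies $d_Y(\Phi(x),q)\le C\sqrt{T-t}$ once $A$ is large but fixed. Because $\theta_Y$ varies in a compact family, the constants in \cite{JST} stay uniform.

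\emph{Main obstacle.} The hardest step is Step 2. We need the terminal base potential, and the error $\varphi(t)-\varphi(T)$, controlled to order $T-t$ in $C^0$, uniformly enough for the quadratic comparison to survive division by $T-t$. What we would try first is to prove $|\partial_t\varphi|\le C$ and a uniform real Hessian bound for the fibre average of $\varphi$. Since $F$ is rigid in the bundle, we would obtain the latter by differentiating the Monge--Amp\`ere equation along horizontal directions given by the local trivializations $U_\alpha\times F$. Finally, we would confirm that the tube radius produced by \cite[Theorem~1.6]{JST} is a fixed multiple of $\sqrt{T-t}$ exceeding the vertex offset. If it is not, we would enlarge the tube and chain Harnack balls finitely many times.
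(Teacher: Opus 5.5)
\textbf{Overall.} Your architecture is the same as the paper's: quadratic wells built from a $C^{1,1}$ terminal base potential (obtained by differentiating the Monge--Amp\`ere equation along product-horizontal directions), Fu--Zhang horizontal bounds for the tube volumes, and \cite[Theorem~1.6]{JST} on a tube enlarged by a fixed factor so that it contains $F_q$.

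\textbf{The gap.} It lies in the quantitative core of Step~2. Your inequality $u_{\theta_Y}(x,t)-u_{\theta_Y}(x_q,t)\ge(A-C)\,d_Y(\Phi(x),q)^2/(T-t)-C$ needs an \emph{upper} bound $U(x_q,t)\le\psi_\infty(q)+n+C(T-t)$ at a point of $F_q$. You plan to get it from $|\dot\varphi|\le C$ and $|\varphi(t)-\varphi(T)|\le C(T-t)$, and both fail in the collapsing regime.
\begin{itemize}
\item Since $\int_X\omega(t)^n\sim c(T-t)^{n-m}$, Jensen's inequality shows that the $\Omega$-average of $\dot\varphi=\log(\omega^n/\Omega)$ is at most $(n-m)\log(T-t)+C$. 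So $\dot\varphi$ is unbounded below.
\item Integrating in time gives $\sup_X(\varphi(t)-\varphi(T))\ge c(T-t)|\log(T-t)|$. This rate is also not needed for the $C^{1,1}$ regularity, which follows from the uniform horizontal Hessian bound and Arzel\`a--Ascoli.
\item The logarithms cancel only in $U=\tau\dot\phi+\phi+nt$. Renormalizing the reference volume by $(T-t)^{n-m}$ merely turns $|\dot\varphi|\le C$ into a pointwise two-sided bound on the collapsing fibre volume density, which is not available.
\item You also cannot use a time-Lipschitz bound on $U$. Since $U_t=n-\tau R_\omega$, the estimate $U\le\Phi^*\psi_\infty+n+C\tau$ is a time-integrated Type~I upper bound on $R_\omega$, which is essentially what is to be proved.
\end{itemize}

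\textbf{The missing idea.} The paper never bounds $U$ from above at a point; it uses two one-sided arguments.
\begin{itemize}
\item First, only $R_\omega\ge-C_R$ is needed for $\partial_t\bigl(\phi+\tau\dot\phi+\tfrac{C_R}{2}\tau^2\bigr)=-\tau(R_\omega+C_R)\le0$. The limit is $\Phi^*\psi_\infty$, because a nonzero pointwise limit of $\tau\dot\phi$ would make $\phi$ diverge logarithmically. Dini's theorem then gives uniform convergence $U\to\Phi^*\psi_\infty+n$ together with the lower bound \eqref{ii:eq:terminaloneside}.
\item Second, $\mathcal F_y=U+\Phi^*\rho_y$ satisfies $\Box_\omega\mathcal F_y=\tr_\omega\Phi^*(\omega_Y-\ddc\rho_y)\ge-C_1$ by the Schwarz bound. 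The minimum principle (cf.\ \cite[Lemma~4.2(1)]{JST}) then gives $m_y(t)\le m_y(t')+C_1(t'-t)$ for the spatial minimum.
\item Letting $t'\to T$ and using uniform convergence yields $m_y(t)\le\min_Y(\psi_\infty+n+\rho_y)+C_1\tau$.
\end{itemize}
Comparing this with the one-sided lower bound at a minimizer $p$ gives $c\,d_{\omega_Y}(\Phi(p),q)^2\le C\tau$, with no point of $F_q$ involved (Proposition~\ref{ii:prop:vertices}).

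\textbf{Smaller corrections.}
\begin{itemize}
\item Your signs are off. One has $[\chi]=2\pi c_1(X)$, and $\theta_Y=\omega_Y+\ddc\psi$ gives $u_{\theta_Y}=u_{\omega_Y}-\Phi^*\psi/(T-t)$.
\item With the correct sign, a minimum well needs $\theta_Y=\omega_Y-\ddc\rho$, where the real Hessian of $\rho$ must beat the lower Taylor bound $-\tfrac L2|z-q|^2$ of $\psi_\infty$. Such $\theta_Y$ cannot in general be kept K\"ahler. You must therefore use the JST estimates for smooth non-positive representatives with uniform $C^4$ bounds, as in Remark~\ref{ii:rem:analytic-inputs}.
\item The pointwise bound $\omega(t)^n\le C(T-t)^{n-m}\Phi^*\omega_Y^m\wedge\chi^{n-m}$ in Step~1 does not follow from your inputs, which control the horizontal quotient but not the fibre volume density. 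Instead, integrate the Schur identity $\det(g_{I\bar J})=(\det A)(\det H_t)$, with $H_t\le C\omega_Y$, over each fibre against the cohomological fibre volume $(T-t)^{n-m}V_F$.
\end{itemize}
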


There is no
stability assumption on $F$ and no symmetry assumption on the
initial metric. Theorem \ref{ii:thm:main} easily extends locally to Fano fibrations whose general fibres are biholomorphic to each other, away from fibres over a certain analytic subvariety of $Y$.

 The proof of Theorem \ref{ii:thm:main} does not rely on the recent result on the analytic structure of tangent flows in \cite{HZ}. Theorem \ref{ii:thm:main} provides the geometric tools for building the partial $C^0$-estimate along the flow (\ref{ii:eq:main}), which can be used to construct the tangent flow as a complete K\"ahler-Ricci soliton with normal singularities via Type I blowup. This will also lead to a new proof of the splitting theorem in \cite{Splitting}.

We would like to remark that there is an alternative proof for Theorem \ref{ii:thm:main} based on fibrewise
exponential normalization of the Ricci potential, which involves a horizontal volume correction. Then Berndtsson's
direct-image curvature formula combined with a localized
scalar--Sobolev iteration can be used to control the Ricci vertices, avoiding the  $C^{1,1}$ regularity used in our proof in this paper. Details of this approach will appear in a forthcoming note.

\subsection{Type I bounds for full curvature tensors}

The scalar estimate leaves open the possibility of curvature
concentration at smaller scales. When $F$ admits a smooth
K\"ahler--Einstein metric, or equivalently when $F$ is K-polystable, this possibility can be excluded, similar to the smooth convergence of Fano Ricci flow on K\'ahler-Einstein manifolds.
The following theorem gives both the curvature bound and the
corresponding description of the limiting geometry.

\begin{theorem}\label{ii:thm:einstein}
In addition to the assumptions of Theorem~\ref{ii:thm:main},
if the fibre $F$ admits a smooth K\"ahler--Einstein metric $g_F$, then
there exists a constant $C>0$ such that the following  hold.
\begin{enumerate}[label=\textup{(\arabic*)},leftmargin=2.2em]
\item For every $0\le t<T$,
\begin{equation}\label{ii:eq:typeI}
 \sup_X|\Rm(g(t))|_{g(t)}\le\frac C{T-t}.
\end{equation}
\item For every $q\in Y$ and $0\le t<T$,
\begin{equation}\label{ii:eq:intrinsic-diameter}
 C^{-1}\sqrt{T-t}\le
 \diam_{(F_q,g(t)|_{F_q})}F_q\le C\sqrt{T-t},
\end{equation}
where the diameter is intrinsic, measured by the induced metric on
$F_q$.

\item\label{ii:thm:einstein-limits}
For any $q_i\in Y$, $x_i\in F_{q_i}$ and $\tau_i\searrow0$, the
pointed rescaled K\"ahler--Ricci flows
\[
 \left(X,J,\tau_i^{-1}g(T+\tau_i a),x_i\right),
 \qquad -T/\tau_i\le a<0,
\]
admit a subsequence converging in the pointed smooth
Cheeger--Gromov sense on compact spacetime subsets of $a<0$.
Every such limit is holomorphically isometric to
\begin{equation}\label{ii:eq:einstein-model}
 \left(\C^m\times F, \
 g_{\mathrm{Euc}}+(-a)g_F\right),
\end{equation}
 where $g_{\mathrm{Euc}}$ is the Euclidean Kahler metric on $\mathbb{C}^m$. 
\end{enumerate}
\end{theorem}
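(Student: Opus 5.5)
We argue by contradiction and blow-up, using Theorem~\ref{ii:thm:main} as the basic input. Suppose (1) fails. Then there are times $t_i\nearrow T$ and points $x_i\in F_{q_i}$ with $(T-t_i)|\Rm|(x_i,t_i)\to\infty$. Put $\tau_i=T-t_i$ and rescale as in (3): $g_i(a)=\tau_i^{-1}g(T+\tau_i a)$. Theorem~\ref{ii:thm:main} gives, uniformly in $i$ on $a\le -1/2$, the bounds $|R_{g_i}|\le C/(-a)$ and $\diam F_q\le C\sqrt{-a}$.

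The first step is uniform local geometry. The horizontal estimates (Fu--Zhang type, used already for the tube volumes) show that $\omega(t)\ge c\,\Phi^*\omega_Y$. Combined with the fibre diameter bound and Perelman's $\kappa$-noncollapsing at scale $\sqrt{T-t}$, this gives two-sided volume control of tubes $\Phi^{-1}(B_Y(q,r\sqrt{\tau}))$. The partial $C^0$ and compactness results of \cite[Section~8]{JST} then yield subsequential convergence of the pointed flows, smooth on a regular part, to a Kähler--Ricci shrinking soliton. The splitting theorem \cite[Theorem~1.1]{Splitting} identifies this limit as $\C^m\times N$, where $N$ is a shrinking soliton (possibly singular) on a normal Fano variety. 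The complex structure on $N$ is the limit of the fibre complex structures, and since the bundle is locally trivial these are all $F$. Hence $N$ is a soliton-type limit on a degeneration of $F$.

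The second step, which I expect to be the main obstacle, is to show that $N\cong F$ carries exactly $g_F$ with smooth convergence. Since $F$ is K-polystable, it has vanishing Futaki invariant and no nontrivial soliton. Uniqueness of degenerations (in the spirit of Chen--Sun--Wang for Fano Ricci flow, or the Tian--Zhu argument) then forces the limit to be the Kähler--Einstein $(F,g_F)$ scaled by $-a$, with no jump of complex structure. To make this rigorous I would restrict the flow to the fibres and compare with the fibrewise Kähler--Einstein metrics $(T-t)g_{F_q}$. These glue to a smooth fibrewise Kähler--Einstein form $\omega_{KE}$ on $X$, because the fibrewise KE metric is unique modulo $\mathrm{Aut}_0(F)$ and varies smoothly in $q$. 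Using the reference $\hat\omega_t=\Phi^*\omega_Y+(T-t)\omega_{KE}$, one runs a parabolic Monge--Ampère $C^0$ and $C^2$ argument in rescaled variables. The automorphism ambiguity is absorbed by a fibrewise normalization, and the needed properness comes from the Ding functional restricted to fibres. The resulting uniform equivalence $C^{-1}\hat\omega_t\le\omega(t)\le C\hat\omega_t$, together with Evans--Krylov and Shi-type local estimates at scale $\sqrt{\tau}$, gives smooth bounds on the blow-ups. This contradicts $|\Rm_{g_i}|(x_i,-1)\to\infty$ and proves (1).

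Given (1) and the metric equivalence, the remaining parts follow. For (2), the upper bound on the intrinsic diameter follows from $\omega(t)|_{F_q}\le C(T-t)\omega_{KE}|_{F_q}$. The lower bound follows from $\omega(t)|_{F_q}\ge C^{-1}(T-t)\omega_{KE}|_{F_q}$, or alternatively from volume: $\Vol(F_q)\sim(T-t)^{n-m}$ while $|\Rm|$ is bounded at that scale. For (3), the curvature bound (1), injectivity radius control from noncollapsing, and Hamilton compactness give smooth pointed limits. The splitting identification, together with the uniqueness from the second step, shows that every limit is $\C^m\times F$ with $g_{\mathrm{Euc}}+(-a)g_F$. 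The Euclidean factor arises because $\tau^{-1}\Phi^*\omega_Y$ flattens.
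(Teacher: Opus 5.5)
Your outline has the right architecture: Type I blow-up, splitting off $\C^m$, identifying the residual factor with $(F,g_F)$, then contradiction. However, the decisive step is asserted rather than proved, and the route you propose for it does not work as stated.

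\textbf{The fibre flow is forced.} The restriction of the flow to a fibre is not a K\"ahler--Ricci flow on $F$. By \eqref{ii:lim:eq:forced-flow} it is $\partial_sh_q=h_q-\Ric(h_q)+\ddc_FD_q$, forced by the horizontal determinant term $D_q$. Hence Tian--Zhu or Chen--Sun--Wang type uniqueness is not available. Monotonicity of the Ding functional is not available either. Properness of $\Ding$ modulo $\mathrm{Aut}_0(F)$ gives no $C^0$ bound unless you first bound $\Ding(\varphi(s))$ along this forced flow, and your sketch never controls the forcing.

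\textbf{The proposed metric equivalence is the wrong target.} The bound $C^{-1}\hat\omega_t\le\omega(t)\le C\hat\omega_t$ against a fixed glued fibrewise Einstein reference is stronger than the theorem. It should not be expected when $\mathrm{Aut}_0(F)$ is nontrivial, already for $F=\P^r$, because the fibre metrics converge to $g_F$ only after diffeomorphisms depending on $q$ and $t$.

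\textbf{Your first step has parallel gaps.} Bamler's theory produces shrinkers only at fixed limiting points, so a limit along moving $q_i$ need not be a soliton. A jump of complex structure in the limit is exactly what has to be excluded, not something you can assume away. Before the residual can be called a degeneration of $F$, you need its compactness and the absence of volume loss, so that Hilbert polynomials match. Finally, ``$F$ has no nontrivial soliton'' says nothing about a soliton living on a degeneration of $F$.

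\textbf{How the paper fills these gaps.}
\begin{enumerate}
\item The uniform bound on the intrinsic fibre Ricci potential gives a fibrewise Poincar\'e inequality. Combined with capacity cutoffs this yields $V_S=V$, and hence $\Var_{\nu_{q,s}}D_q\to0$.
\item $\Ding$ is bounded below on the K\"ahler--Einstein fibre and satisfies $\frac{d}{ds}\Ding\le-\frac12e^{-K}\int f_q^2\,d\nu_{q,s}+\frac12e^{3K}\Var_{\nu_{q,s}}D_q$. So there are times with $\int f_q^2\,d\nu_{q,s}\to0$, and each fixed-fibre tangent is Einstein.
\item That tangent has entropy $\Nash_*=\log\frac{V}{(4\pi)^{n-m}}-(n-m)$, the maximum allowed by the identity $\Nash_Z=\log\frac{V_S}{(4\pi)^{n-m}}-(n-m)-\Ent(\sigma_S\mid V_S^{-1}dV_S)$.
\item This value is independent of $q$. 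Entropy continuity and monotonicity then make every moving limit a split shrinker of entropy $\Nash_*$, whose residual must therefore be Einstein.
\item Convergence of complete anticanonical section spaces gives a $\Q$-Gorenstein degeneration of $F$ to $S$. Berman and Blum--Xu separatedness give $S\cong F$, Hallgren--Zhang gives smoothness, and Bando--Mabuchi fixes the metric.
\item An implicit-function argument shows that the smooth convergence region contains the whole original fibre, and (1)--(3) follow by contradiction.
\end{enumerate}
The Ding inequality with the forcing term and the entropy rigidity are the ideas your proposal lacks.
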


The case of projective-space fibres gives an immediate application.

\begin{corollary}[Projective bundles]\label{ii:cor:projective}
In addition to the assumptions of Theorem~\ref{ii:thm:main}, suppose that
$F\cong\P^r$ for some $r\ge1$. Then there is a constant $C>0$
such that
\[
 \sup_X|\Rm(g(t))|_{g(t)}\le\frac{C}{T-t},
 \qquad 0\le t<T.
\]

\end{corollary}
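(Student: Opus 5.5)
This is a direct application of Theorem~\ref{ii:thm:einstein}. The plan is to check that its single additional hypothesis holds for $F\cong\P^r$, namely that $F$ admits a smooth K\"ahler--Einstein metric; part~(1) of that theorem then gives exactly the claimed bound $\sup_X|\Rm(g(t))|_{g(t)}\le C/(T-t)$.

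\emph{Step 1: the fibre is Fano and Einstein.} The Fubini--Study metric $g_{FS}$ on $\P^r$ is a smooth K\"ahler metric with $\Ric(g_{FS})=(r+1)g_{FS}$. After rescaling by a positive constant we get a K\"ahler--Einstein metric $g_F$ whose K\"ahler class is $2\pi c_1(\P^r)$, which is the normalization used in Theorem~\ref{ii:thm:einstein}. In particular $\P^r$ is a smooth Fano manifold of dimension $r=n-m\ge1$. So the Fano bundle assumption of Theorem~\ref{ii:thm:main} is consistent with taking $F=\P^r$. All other hypotheses, namely $\Phi$ being a Fano bundle over a smooth projective $Y$ with $0<m<n$ and the class identity \eqref{ii:eq:class}, are assumed verbatim in the corollary.

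\emph{Step 2: apply the theorem.} Invoke Theorem~\ref{ii:thm:einstein} with $g_F$ from Step~1. Its conclusion~(1), inequality \eqref{ii:eq:typeI}, is precisely the stated estimate. The constant $C$ depends on the data $(X,\omega_0,\Phi,\omega_Y)$ through that theorem.

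There is no real obstacle here. The only point to confirm is the normalization of $g_F$: the limit model \eqref{ii:eq:einstein-model} uses $(-a)g_F$, which forces $\Ric(g_F)=g_F/2$ under the flow convention $\partial_t\omega=-\Ric$. A constant rescaling of $g_{FS}$ achieves this. All the analytic difficulty is contained in Theorem~\ref{ii:thm:einstein} itself, which is taken as given.
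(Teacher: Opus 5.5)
Your proposal is correct and follows the paper's route exactly: the corollary is stated as an immediate application of Theorem~\ref{ii:thm:einstein}\textup{(1)}, because the rescaled Fubini--Study metric on $\P^r$ is a smooth K\"ahler--Einstein metric. Your normalization check is harmless but not needed, since part~\textup{(1)} only uses the existence of a smooth K\"ahler--Einstein metric on $F$.
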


In particular, this conclusion holds for $X=\P(E)$, where $E$ is a
holomorphic vector bundle of rank $r+1$ over a connected smooth
projective manifold $Y$ of positive dimension.
 
Theorem \ref{ii:thm:einstein} extends to general Fano bundles (c.f. \cite{JSfb3}) in the following sense. If the fibre $F$ admits a smooth K\"ahler-Ricci soliton metric, then the Type I curvature estimates still hold as in Theorem \ref{ii:thm:einstein}. For a general fibre, the Type I curvature estimates fail in general, however, the Type I scalar curvature and fibre diameter estimate from Theorem \ref{ii:thm:main} can be used to produce a splitting on the Type I blow-up limit, where the compact component is a compact K\"ahler-Ricci soliton space biholomorphic to the algebraic optimal degneration of the original fibre.

\begin{remark}\label{ii:rem:local}
Theorem~\ref{ii:thm:main} and Theorem \ref{ii:thm:einstein} below also hold locally over the
regular product locus of an isotrivial Fano fibration in the
finite-time AMMP setting. More precisely, let
$Y^\circ\subset Y_{\mathrm{reg}}$ consist of regular values of
$\Phi$ and be covered by Zariski-open product regions with common
fibre $F$. For every $K\Subset Y^\circ$, the estimates hold on
$\Phi^{-1}(K)$ with constants depending on $K$. The full curvature,
intrinsic diameter and smooth limiting-product conclusions retain
the K\"ahler--Einstein hypothesis. The limiting statement holds at
fixed $q\in Y^\circ$ and for all moving sequences $q_i\in K$.
The regular-value condition is part of the fibration hypothesis;
smoothness of $Y$ alone does not exclude singular fibres. The local
splitting input is \cite[Remark~1.1 and Proposition~2.1]{Splitting}.
\end{remark}

%\begin{remark}\label{ii:rem:irrational}
%When $Y$ is smooth, $\theta_Y$ may be any K\"ahler form on $Y$
%satisfying
%\[
% [\omega_0]-2\pi T c_1(X)=\Phi^*[\theta_Y].
%\]
%One may take $\omega_Y=\theta_Y$ throughout; no rationality of
%$[\theta_Y]$ is required. The morphism $\Phi$ and the stated
%Zariski-open-product property are assumed. An auxiliary projective
%embedding of $Y$ is chosen independently of $[\theta_Y]$, and the
%polarization of each fibre is its anticanonical polarization.
%\end{remark}

The strategy for our scalar-curvature and ambient fibre-diameter
estimates originates in our joint work with Tian \cite{JST}.
A central idea of that work is to extend Perelman's estimates
by working near Ricci vertices and exploiting the freedom to
change the smooth representative of the limiting class.
The Li--Yau and Harnack estimates established there turn suitable
vertex localization and a tube-volume bound into Type~I
scalar-curvature and diameter estimates
\cite[Theorems~1.5--1.6]{JST}.

Our argument implements this strategy uniformly at every fibre.
The $C^{1,1}$ regularity of the limiting base potential provides
smooth quadratic wells, which select representatives whose Ricci
vertices lie within base distance $C\sqrt{T-t}$ of any prescribed
fibre. The horizontal estimates of Fu--Zhang \cite{FuZhang},
together with the fibre-volume identity, verify the tube-volume
hypothesis. The desired scalar-curvature and ambient diameter
bounds then follow from \cite[Theorem~1.6]{JST}. Thus the new
localization argument supplies the geometric input needed to
apply the analytic estimates of \cite{JST} throughout the bundle.

\bigskip

Section~\ref{ii:sec:preliminaries} is a collection of notations and known estimates.
Sections~\ref{ii:sec:horizontal}--\ref{ii:sec:scalar} establish the
horizontal estimates, localization of Ricci potentials with Ricci vertices and the proof of 
Theorem~\ref{ii:thm:main}. Section~\ref{ii:sec:relative} gives the
relative potentials and the fibrewise Poincar\'e inequality.
Section~\ref{ii:lim:sec:geometry} establishes the limiting measures,
anticanonical convergence and entropy continuity.
Section~\ref{ii:sec:curvature} identifies the Einstein limits and completes the proof of Theorem~\ref{ii:thm:einstein}.

\bigskip

\noindent\textbf{Acknowledgments.}
The authors thank Professor Gang Tian for his constant encouragement and support. The authors used AI-assisted tools (ChatGPT 5.6 Sol) during the preparation of this manuscript, most notably in the development of Section 5. All mathematical content remains the sole responsibility of the authors.

\section{Preliminaries}\label{ii:sec:preliminaries}

We collect the notation and estimates that will be used throughout
the paper. The parabolic Monge--Amp\`ere equation fixes the
normalization of the potentials. The Schwarz estimate and the
horizontal estimate of Fu--Zhang then control the horizontal
quotient metric and the volumes of collapsing tubes. These are
the starting points for the horizontal Hessian estimates in
Section~\ref{ii:sec:horizontal} and the Ricci-vertex construction
in Section~\ref{ii:sec:vertices}.

\subsection{The flow and its scalar equation}\label{ii:pre:flow}

We retain the setup of Theorem~\ref{ii:thm:main}, with
$\Phi:X^n\to Y^m$, whose fibres $F_q=\Phi^{-1}(q)\cong F$ have complex dimension $n-m$.
Replacing $\omega(t)$ by $T^{-1}\omega(Tt)$ and $\omega_Y$ by
$T^{-1}\omega_Y$, we first normalize the maximal time to $T=1$.
The class identity \eqref{ii:eq:class} then gives
$[\omega_0-\Phi^*\omega_Y]=2\pi c_1(X)$. By the
$\partial\bar\partial$-lemma, there is a smooth positive volume
form $\Omega$ satisfying
\[
 \Ric(\Omega)=\omega_0-\Phi^*\omega_Y.
\]
Fix its multiplicative constant. With
$\widehat\omega_t=(1-t)\omega_0+t\Phi^*\omega_Y$, the flow is
\begin{equation}\label{ii:eq:MA}
 \omega(t)=\widehat\omega_t+\ddc\phi(t),\qquad
 \frac{\partial\phi}{\partial t}
 =\log\frac{(\widehat\omega_t+\ddc\phi)^n}{\Omega},
 \qquad \phi(0)=0.
\end{equation}
The scalar equation fixes the time-dependent additive constant
of $\phi$. Indeed, the $\ddc$ of the difference between the two
sides of the scalar evolution equation vanishes; that difference
is spatially constant and is removed by this normalization.
For the normalized flow and its Ricci potential we set
\[
 \tau=1-t=e^{-s},\qquad \widetilde{\omega}(s)=\tau^{-1}\omega(t),
\]
\begin{equation}\label{ii:eq:Uintro}
 U=\tau\dot\phi+\phi+nt,\qquad u_0=e^sU.
\end{equation}

We use the complex conventions
\[
 \Delta_\omega v=g^{i\bar j}v_{i\bar j},\qquad
 |\partial v|_\omega^2=g^{i\bar j}v_i v_{\bar j},\qquad
 R_\omega=\tr_\omega\Ric(\omega),\qquad
 dV_\omega=\frac{\omega^n}{n!}.
\]
If $g_\omega=2\operatorname{Re}(g_{i\bar j}dz^i\otimes d\bar z^j)$
is the associated real metric, our real Ricci-flow convention is
$g=2g_\omega$. The unscaled associated metric satisfies
\[
 \Delta_{g_\omega}=2\Delta_\omega,\qquad
 R_{g_\omega}=2R_\omega.
\]
Thus, for real functions $v$,
\begin{equation}\label{ii:eq:real}
 \begin{gathered}
 \partial_tg=-2\Ric(g),\qquad \Delta_g=\Delta_\omega,
 \qquad R_g=R_\omega,\\
 |\nabla v|_g^2=|\partial v|_\omega^2,\qquad
 dV_g=2^n dV_\omega.
 \end{gathered}
\end{equation}
Constant rescaling leaves the Ricci tensor unchanged and
multiplies the volume form in real dimension $2n$ by $2^n$,
which explains the normalization.
In particular, the complex and real heat operators agree. We write
\begin{equation}\label{ii:eq:boxes}
 \Box_\omega=\partial_t-\Delta_{\omega(t)},\qquad
 \Box_{\widetilde{\omega}}=\partial_s-\Delta_{\widetilde{\omega}(s)},\qquad
 \Box_g=\partial_t-\Delta_{g(t)}=\Box_\omega.
\end{equation}
When we return to the original maximal time, we use
$\tau=T-t$, $s=-\log\tau$ and $\widetilde{\omega}=\tau^{-1}\omega(t)$.
In either convention,
\[
 \partial_s=\tau\partial_t,\qquad
 \Delta_{\widetilde{\omega}}=\tau\Delta_\omega.
\]

Constants $c,C>0$ may change from line to line and depend only on
the fixed flow and bundle. Local constants may also depend on a
fixed compact subset of the regular product locus. We work in a
finite collection of inner product charts, with larger charts
reserved for cutoffs. The notation $B_{\omega_Y}(q,r)$ denotes a base
metric ball; $B_r(q)$ denotes a Euclidean ball in fixed coordinates.
Fibre diameters are ambient unless an induced fibre metric is
explicitly specified.

\subsection{Basic potential and horizontal estimates}\label{ii:pre:estimates}

The following estimates are known. The potential bounds and the
parabolic Schwarz lemma apply to the flow, while the bundle
structure supplies the complementary horizontal upper bound of
Fu--Zhang. We state their precise forms and record the dependence
of the constants needed for later applications of the estimates
of Jian--Song--Tian.

\begin{lemma}\label{ii:lem:basic}
There is a uniform constant $C$ such that
\begin{equation}\label{ii:eq:basic}
 |\phi|\le C,\quad \dot\phi\le C,\quad R_\omega\ge-C,
 \quad |U|\le C,\quad\omega(t)\ge C^{-1}\Phi^*\omega_Y.
\end{equation}
Moreover,
\begin{equation}\label{ii:eq:Uident}
 U_t=n-\tau R_\omega,\quad
 \ddc U=\omega-\Phi^*\omega_Y-\tau\Ric(\omega),\quad
 \Box_{\omega} U=\tr_\omega\Phi^*\omega_Y.
\end{equation}
\end{lemma}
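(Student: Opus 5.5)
We first derive the identities \eqref{ii:eq:Uident} by direct computation from \eqref{ii:eq:MA}, and then obtain the bounds in \eqref{ii:eq:basic} by the maximum principle.

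\textbf{The identities.} Write $\omega=\widehat\omega_t+\ddc\phi$. Differentiating \eqref{ii:eq:MA} in $t$ gives
\[
 \ddc\dot\phi=\partial_t\omega-\partial_t\widehat\omega_t=-\Ric(\omega).
\]
Hence $\partial_t\widehat\omega_t=\Phi^*\omega_Y-\omega_0$, and also $\ddc\dot\phi=-\Ric(\omega)+\Ric(\Omega)=-\Ric(\omega)+\omega_0-\Phi^*\omega_Y$. Both expressions for $\ddc\dot\phi$ agree because $\Ric(\Omega)=\omega_0-\Phi^*\omega_Y$. Using $\tau=1-t$, we then compute
\[
 \ddc U=\tau\ddc\dot\phi+\ddc\phi=\tau(\omega_0-\Phi^*\omega_Y)-\tau\Ric(\omega)+\omega-\widehat\omega_t .
\]
Since $\widehat\omega_t=\tau\omega_0+t\Phi^*\omega_Y$, the right-hand side equals $\omega-\Phi^*\omega_Y-\tau\Ric(\omega)$, which is the second identity.

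For the first identity, the time derivative of \eqref{ii:eq:MA} gives $\ddot\phi=\tr_\omega(\partial_t\omega)=-R_\omega$. Therefore
\[
 U_t=-\dot\phi+\tau\ddot\phi+\dot\phi+n=n-\tau R_\omega .
\]
Tracing the second identity gives $\Delta_\omega U=n-\tr_\omega\Phi^*\omega_Y-\tau R_\omega$. Subtracting this from $U_t$ yields $\Box_\omega U=\tr_\omega\Phi^*\omega_Y$.

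\textbf{The bounds.} These are standard, following Song--Tian and Zhang.

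\begin{enumerate}
\item \emph{Upper bounds on $\phi$ and $\dot\phi$.} Since $[\widehat\omega_t]$ stays bounded, Jensen's inequality applied to the volume form, or a direct maximum principle argument for $\phi$, gives $\phi\le C$. The identity $\Box_\omega(t\dot\phi-\phi-nt)=-\tr_\omega\omega_0\le0$ then gives $\dot\phi\le C$.

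\item \emph{Lower bound on $\phi$.} Because $[\omega_0]-2\pi c_1=\Phi^*[\omega_Y]$ is semi-positive, the reference forms $\widehat\omega_t$ lie in a fixed family dominating $t\Phi^*\omega_Y$, and Ko{\l}odziej's $L^\infty$ estimate yields $\phi\ge -C$. Combined with the maximum principle bound on $U$ below, this gives the lower bound. Alternatively, $\Box_\omega U\ge0$ yields $U\ge\min U(0)\ge -C$ directly, since $U(0)=\dot\phi(0)$ is bounded.

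\item \emph{Upper bound on $U$.} From $\Box_\omega(U-\phi)$ we get an upper bound, using $\tau\dot\phi\le C$ and $\phi\le C$.

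\item \emph{Scalar curvature.} The lower bound $R_\omega\ge -C$ follows from the evolution $\Box R=|\Ric|^2\ge R^2/n$ and the maximum principle.

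\item \emph{Schwarz lemma.} The parabolic Schwarz lemma applied to $\Phi:(X,\omega(t))\to(Y,\omega_Y)$ gives
\[
 \Box_\omega\log\tr_\omega\Phi^*\omega_Y\le C\tr_\omega\Phi^*\omega_Y .
\]
Consider $H=\log\tr_\omega\Phi^*\omega_Y-AU$. Since $\Box_\omega U=\tr_\omega\Phi^*\omega_Y$, choosing $A=C+1$ gives
\[
 \Box_\omega H\le -\tr_\omega\Phi^*\omega_Y .
\]
At a maximum of $H$ this bounds $\tr_\omega\Phi^*\omega_Y$, and the bound $|U|\le C$ then gives $\omega\ge C^{-1}\Phi^*\omega_Y$.
\end{enumerate}

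\textbf{Main obstacle.} The main obstacle is the uniform lower bound for $\phi$ as $t\to1$, where $\widehat\omega_1=\Phi^*\omega_Y$ degenerates. I would handle it using $U\ge -C$ from the maximum principle, which holds because $\Box_\omega U\ge0$. This gives $\phi\ge -C-\tau\dot\phi-nt$. I would then combine this with the semi-positive-class $C^0$ estimate of Eyssidieux--Guedj--Zeriahi type, whose Monge--Amp\`ere density $e^{\dot\phi}$ is bounded above.
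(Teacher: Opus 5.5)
Your computation of the three identities in \eqref{ii:eq:Uident} is correct, apart from a slip in the first display. There $\partial_t\omega-\partial_t\widehat\omega_t$ equals $-\Ric(\omega)+\omega_0-\Phi^*\omega_Y$, not $-\Ric(\omega)$; you mean $\partial_t\omega=-\Ric(\omega)$, and the formula you actually use afterwards is the right one. Steps (1), (3), (4) and (5) are also sound. In (1), note that $t\dot\phi\le\phi+nt$ only gives $\dot\phi\le C/t$, so you should add that the flow is smooth on $[0,\frac12]$. This is a self-contained route, whereas the paper simply quotes these facts from Fu--Zhang and Jian--Song--Tian.

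The gap is your mechanism for $\phi\ge-C$, which you single out as the main obstacle. A Ko{\l}odziej or Eyssidieux--Guedj--Zeriahi type estimate driven only by the bound $e^{\dot\phi}\le C$ is not uniform as $t\to1$. The classes $[\widehat\omega_t]$ collapse to $\Phi^*[\omega_Y]$, which is not big because $m<n$, and $\int_X\widehat\omega_t^n\sim\tau^{n-m}\to0$. Uniform $L^\infty$ estimates for such collapsing families need the density \emph{normalized by the volume of the class}, here $\tau^{-(n-m)}e^{\dot\phi}$, to be bounded in some $L^p$ with $p>1$. That is essentially a volume lower bound, which you do not have at this stage. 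In fact the estimate you would need is false. Pull back from $Y$ a cut-off $\omega_Y$-psh function $\epsilon\log(|y|^2+\delta^2)$ with $\delta^{2m}=\tau^{n-m}$. This produces $\widehat\omega_t$-psh functions whose Monge--Amp\`ere densities relative to $\Omega$ are uniformly bounded, yet whose infima tend to $-\infty$.

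Fortunately no pluripotential input is needed, and you already have both ingredients. $\Box_\omega U\ge0$ gives $U\ge\min_X\dot\phi(0)$, and step (1) gives $\tau\dot\phi\le C\tau$. Hence
\[
 \phi=U-\tau\dot\phi-nt\ge\min_X\dot\phi(0)-C\tau-n\ge-C.
\]
With step (2) replaced by this one line, your proof of the lemma is complete.
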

The potential and Schwarz bounds in \eqref{ii:eq:basic} are
\cite[Lemma~2.1]{FuZhang}; the differential Schwarz estimate is
\cite[Lemma~4.1]{JST}. The scalar lower bound and the estimate
$|U|\le C$ are recalled in \cite[proofs of Lemmas~4.2 and~4.4]{JST}.
The identities \eqref{ii:eq:Uident} are
\cite[equations~(4.14)--(4.15)]{JST}, written under the change
\[
 u_0=e^sU,\qquad \partial_s=\tau\partial_t,\qquad
 \Delta_{\widetilde{\omega}}=\tau\Delta_\omega.
\]
Here $u_0$ denotes the unperturbed normalized potential in that paper.

\begin{lemma}\label{ii:lem:horizontal}
Let $B$ be a coordinate ball contained in a Zariski trivializing open set, so that $\Phi^{-1}(B)\simeq B\times F$. Every relatively compact inner base ball $B'\Subset B$ satisfies
\begin{equation}\label{ii:eq:rawhorizontal}
 \omega(t)|_{B'\times\{z\}}\le C\omega_Y|_{B'}
 \qquad(z\in F,\ 0\le t<1).
\end{equation}
The constants are uniform in a fixed finite collection of inner product charts.
\end{lemma}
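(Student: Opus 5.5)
This is the horizontal estimate of Fu--Zhang, and I will prove it by a Chern--Lu/Schwarz-type maximum principle applied to the holomorphic projection onto $F$, not onto $Y$.

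\emph{Setup.} Fix a product chart $\Phi^{-1}(B)\simeq B\times F$ and let $\pi_F:B\times F\to F$ be the second projection. Choose a K\"ahler form $\omega_F$ on $F$ and set $\omega_E=\omega_Y+\pi_F^*\omega_F$, a product metric on $B\times F$. The quantity to control is the restriction of $\omega(t)$ to horizontal slices $B'\times\{z\}$. Equivalently, I need an upper bound for $\omega(t)(v,\bar v)$ when $v$ is tangent to a slice and has unit $\omega_Y$-length.

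\emph{Key steps.} First, I will use the slice-wise complex structure. Each slice $B\times\{z\}$ is a complex submanifold on which $\Phi$ is a biholomorphism onto $B$. So the restriction $\omega(t)|_{B\times\{z\}}$ is a K\"ahler metric on $B$, and $\tr_{\omega_Y}(\omega(t)|_{\text{slice}})$ is the function to bound. Second, I will reduce to a potential-type bound. On the slice, $\omega(t)|_{\rm slice}=\widehat\omega_t|_{\rm slice}+\ddc(\phi|_{\rm slice})$ with $|\phi|\le C$ by Lemma~\ref{ii:lem:basic}. Third, I will apply a parabolic maximum principle to the quantity
\[
 H=\log\tr_{\omega(t)}^{-1}\ \text{(dual form)}\quad\text{or rather}\quad H=\log \Lambda - A\phi ,
\]
where $\Lambda$ is the largest eigenvalue of $\omega(t)$ restricted to the horizontal distribution $\ker d\pi_F$, measured against $\omega_E$. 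The cleanest version uses the holomorphic map $\pi_F$ and the Chern--Lu inequality. This gives $\Box_\omega\log\tr_{\omega}\pi_F^*\omega_F\le C\tr_\omega\omega_E$, hence a lower bound on the fibre directions relative to $\omega$. Combined with the volume form bound $\omega^n\le C\tau^{n-m}\Omega$ (from $\dot\phi\le C$) and the Schwarz bound $\omega\ge C^{-1}\Phi^*\omega_Y$, this lets one bound horizontal eigenvalues from above.

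Concretely, I expect the estimate
\[
 \tr_{\omega}\pi_F^*\omega_F\le C\tau^{-1}
\]
to come from the maximum principle applied to $\log\tr_\omega\pi_F^*\omega_F-AU/\tau$ or a similar $\tau$-weighted combination. From the volume bound, the fibre eigenvalues product is $\gtrsim$ and $\lesssim\tau^{n-m}$. The horizontal eigenvalues' product is then $\le C$, and each is $\ge C^{-1}$ by the Schwarz bound, so each is $\le C$. The horizontal–vertical mixing has to be absorbed by a Cauchy--Schwarz argument on the determinant.

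\emph{Main obstacle.} The trivialization is only local, so the maximum principle must be localized to $B'\Subset B$ with a cutoff $\rho$ on the base. Second derivatives of $\rho$ produce terms like $\tr_\omega\Phi^*\omega_Y$, which are controlled by $\Box_\omega U$ from \eqref{ii:eq:Uident}. The gradient terms $|\partial\rho|^2_\omega$ are bounded by the Schwarz estimate. The delicate part is showing that the bad term from the Chern--Lu inequality, namely the curvature of $\omega_F$ paired with $\tr_\omega\omega_E$, is dominated by $-A\,\Box U$ plus the $\tau^{-1}$ weight. I would try first to follow \cite[Lemma~2.1]{FuZhang} directly, with the localization constants fixed by the finite chart cover.
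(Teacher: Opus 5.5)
\textbf{There is a genuine gap: the route through the projection $\pi_F$ cannot work.} Your argument rests on two intermediate bounds, $\tr_\omega\pi_F^*\omega_F\le C\tau^{-1}$ and $\omega(t)^n\le C\tau^{n-m}\Omega$. Neither is available.

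The second does not follow from $\dot\phi\le C$. That bound only gives $\omega(t)^n=e^{\dot\phi}\Omega\le C\Omega$; the factor $\tau^{n-m}$ would require $\dot\phi\le(n-m)\log\tau+C$, which is not in Lemma~\ref{ii:lem:basic}.

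The first has no working barrier. $\Box_\omega(U/\tau)=U/\tau^2+\tau^{-1}\tr_\omega\Phi^*\omega_Y$ contains no fibre trace, so nothing absorbs the Chern--Lu error $C\tr_\omega\pi_F^*\omega_F$. Using $-\Box_\omega\phi$ instead yields only $\tau\tr_\omega\omega_0$ in the fibre directions, which is off by a factor of $\tau$.

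More decisively, the two bounds together with the Schwarz bound are false in the generality of the lemma. They would give $c\,\operatorname{diag}(\tau I,I)\le(g_{I\bar J})\le C\,\operatorname{diag}(\tau I,I)$ in product coordinates, so $\tau^{-1}\omega(t)|_{F_q}$ would be uniformly equivalent to a fixed metric on $F$. Take the trivial bundle $Y\times F$ with a product initial metric. There this says the normalized K\"ahler--Ricci flow on $F$ stays uniformly equivalent to a fixed metric. Standard higher-order estimates and Perelman's monotonicity then produce a K\"ahler--Ricci soliton on $F$ itself. So your scheme fails for Fano fibres without one, which is exactly the case where the paper expects the blow-up residual to be a nontrivial degeneration of $F$.

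Conceptually, a Schwarz lemma for a map \emph{out of} $(X,\omega(t))$, such as $\pi_F$, bounds $\omega(t)$ only from below. An upper bound on horizontal slices has to come from holomorphic objects going \emph{into} $X$.

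\textbf{The missing idea: the quantity in the lemma is itself a heat subsolution.} The horizontal coordinate fields $V=\partial_{y^\alpha}$ of $\Phi^{-1}(B)\cong B\times F$, and their constant linear combinations, are holomorphic, and $\omega(t)|_{B'\times\{z\}}(V,\bar V)=|V|^2_{\omega(t)}$. Your first candidate, the largest eigenvalue of $\omega(t)$ on $\ker d\pi_F$, is exactly $\max_{|\xi|=1}|V_\xi|^2_{\omega(t)}$; you should not have abandoned it.

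For a holomorphic vector field along $\partial_t\omega=-\Ric(\omega)$, one has $\partial_t|V|^2_\omega=-\Ric(V,\bar V)$ and $\Delta_\omega|V|^2_\omega=|\nabla V|^2_\omega-\Ric(V,\bar V)$. The Ricci terms cancel, giving $\Box_\omega|V|^2_\omega=-|\nabla V|^2_\omega$, and by Kato $\Box_\omega\log|V|^2_\omega\le0$. In the notation of Lemma~\ref{ii:lem:heat} this is $\Box_\omega\Psi_{\alpha\bar\alpha}\le0$, obtained by differentiating the $y$-independent potential equation twice horizontally.

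Your localization remarks are the right ones, applied to this quantity: base cutoffs or weights cost only $\tr_\omega\Phi^*\omega_Y$-type terms, which are bounded by the Schwarz estimate. For instance, with a base weight $|\sigma|_h^{2\lambda}$, where $\sigma$ vanishes off the trivializing Zariski-open set and $\lambda$ is large enough to kill the poles of algebraic horizontal fields, one gets
\[
\Box_\omega\log\bigl(|\sigma|_h^{2\lambda}|V|^2_\omega\bigr)\le\lambda\tr_\omega\Phi^*\Theta_h\le C .
\]
Since $T<\infty$, the maximum principle then bounds the weighted norm. The paper does not reprove the lemma. It cites Fu--Zhang's weighted horizontal estimate, notes that the weight is bounded below on inner charts, and takes a finite cover. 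No information about fibre directions enters, which is why the estimate holds for every Fano fibre.
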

This is the horizontal estimate of Fu--Zhang
\cite[Lemma~2.2]{FuZhang}, also stated in
\cite[Lemma~9.1]{JST}, in the compact-local form recorded in
\cite[Proposition~2.1]{Splitting}. On each relatively compact inner
product chart, the fixed weight in that estimate is bounded away
from zero. A finite cover makes the constants uniform.

\begin{remark}\label{ii:rem:analytic-inputs}
The analytic results of \cite{JST} apply with the class identity
\eqref{ii:eq:class}. We explain the change from the projective
reference form in that paper. When $Y$ is smooth, fix a projective
embedding independently of $[\omega_Y]$, and let $\eta$ be its induced
Fubini--Study form. Compactness gives
$c\eta\le\omega_Y\le C\eta$. The Schwarz estimate for $\Phi^*\omega_Y$
therefore controls the pullbacks of every fixed auxiliary base form.
For a uniformly $C^4$-bounded family of smooth perturbations $\rho$,
choose a fixed sufficiently large $A$ so that
\[
 \kappa_\rho=A\omega_Y+\ddc\rho\ge\omega_Y,\qquad
 \ddc\rho-\omega_Y=\kappa_\rho-(A+1)\omega_Y.
\]
Both $\kappa_\rho$ and $\omega_Y$ have uniform positive lower bounds
and upper bounds for their holomorphic bisectional curvatures.
The differential Schwarz estimate \cite[Lemma~4.1]{JST} applied to
these two metrics therefore gives precisely the trace inequalities
used in \cite[Section~5.1, equations~(5.6)--(5.9)]{JST}.
The potential equations are exactly \eqref{ii:eq:Uident} and its
normalized version, so the cited estimates use no rationality of
$[\omega_Y]$. A fixed smooth extension from the embedded compact base
bounds the ambient $C^4$ norms of any uniformly $C^4$-bounded family
of perturbations. This supplies the uniform constants required by
\cite[Proposition~5.2 and Theorem~1.6]{JST}.

For the local formulation in Remark~\ref{ii:rem:local}, all
variable cutoffs and perturbations have derivatives supported in
fixed relatively compact regular product charts and are constant
outside larger charts. Their pullbacks are smooth on $X$, and the
same Schwarz and derivative bounds hold with constants depending on
these charts. The ambient projective reference supplies the global
Schwarz bound when the base is singular.
\end{remark}

\subsection{Quotient metrics and fibre volumes}\label{ii:pre:quotient}

The estimates just recalled have two consequences that will be
used repeatedly: uniform control of the horizontal quotient and
an upper bound for the volume of a collapsing tube. We collect
the estimates in two lemmas and record the algebraic identities
separately.

For $x\in X$ and $\xi\in T_{\Phi(x)}Y$, define the horizontal
quotient metric by
\[
 H_t(x)(\xi,\bar\xi)
 =\inf_{d\Phi_x(V)=\xi}\omega(t)_x(V,\bar V).
\]
Its normalized version and determinant term are
\begin{equation}\label{ii:eq:quotient}
 H=\tau^{-1}H_t,\qquad
 D=\log\frac{\det H_t}{\det\omega_Y}
   =\log\frac{\det H}{e^{ms}\det\omega_Y},\qquad
 D_q=D|_{F_q}.
\end{equation}
The determinant ratio is intrinsic on the base tangent space at
$\Phi(x)$, so $D$ is a well-defined function on $X$.

\begin{lemma}[Horizontal quotient]\label{ii:pre:lem:quotient}
There are uniform constants $c,C>0$ such that
\begin{equation}\label{ii:eq:SQ}
 c\omega_Y\le H_t\le C\omega_Y,
\end{equation}
\begin{equation}\label{ii:pre:eq:normalized-quotient}
 ce^s\omega_Y\le H\le Ce^s\omega_Y,\qquad |D|\le C.
\end{equation}
\end{lemma}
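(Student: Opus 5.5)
The lower bound in \eqref{ii:eq:SQ} comes from the Schwarz estimate in Lemma~\ref{ii:lem:basic}, and the upper bound from the Fu--Zhang horizontal estimate of Lemma~\ref{ii:lem:horizontal}. The normalized statements \eqref{ii:pre:eq:normalized-quotient} are then pure bookkeeping.

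\emph{Lower bound.} Fix $x\in X$ and $\xi\in T_{\Phi(x)}Y$, and take any $V\in T_xX$ with $d\Phi_x(V)=\xi$. By \eqref{ii:eq:basic},
\[
 \omega(t)_x(V,\bar V)\ge C^{-1}(\Phi^*\omega_Y)_x(V,\bar V)=C^{-1}\omega_Y(\xi,\bar\xi).
\]
The right-hand side does not depend on $V$, so taking the infimum over lifts gives $H_t\ge C^{-1}\omega_Y$, uniformly in $x$ and $t$.

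\emph{Upper bound.}
\begin{enumerate}
\item Cover $Y$ by finitely many relatively compact inner balls $B'\Subset B$. Each $B$ is a coordinate ball inside a Zariski trivializing set, so $\Phi^{-1}(B)\cong B\times F$. Such a finite cover exists because $Y$ is compact and the trivializing sets cover $Y$.
\item At $x=(q,z)$ with $q\in B'$, choose the lift $V=(\xi,0)$ tangent to the slice $B'\times\{z\}$. This is a legitimate competitor in the infimum.
\item Lemma~\ref{ii:lem:horizontal} then gives
\[
 H_t(x)(\xi,\bar\xi)\le\omega(t)|_{B'\times\{z\}}(\xi,\bar\xi)\le C\,\omega_Y(\xi,\bar\xi).
\]
\end{enumerate}
The constant is uniform over the finite collection of charts. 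This proves \eqref{ii:eq:SQ}.

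\emph{Normalized version.} Since $H=\tau^{-1}H_t=e^sH_t$, multiplying \eqref{ii:eq:SQ} by $e^s$ gives $ce^s\omega_Y\le H\le Ce^s\omega_Y$. For $D$, the determinant of $H_t$ relative to $\omega_Y$ on the $m$-dimensional space $T_{\Phi(x)}Y$ lies between $c^m$ and $C^m$, so $|D|\le C$. The identity $\det H=e^{ms}\det H_t$ gives the second expression for $D$ in \eqref{ii:eq:quotient}.

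\emph{Where care is needed.} The only delicate point is well-definedness. One must check that $H_t$ is a genuine Hermitian form on $T_{\Phi(x)}Y$, and that the slice lift is compatible with the infimum in every chart.
\begin{itemize}
\item $\Phi$ is a submersion, so the lifts of $\xi$ form a nonempty affine space $V_0+\ker d\Phi_x$.
\item $\omega(t)_x$ is positive definite, so the infimum is attained at the $\omega(t)$-orthogonal complement of the vertical space $\ker d\Phi_x$.
\item Hence $H_t$ is the quotient Hermitian metric on $T_xX/\ker d\Phi_x\cong T_{\Phi(x)}Y$. In particular it is Hermitian, and its determinant ratio against $\omega_Y$ is intrinsic.
\end{itemize}
Everything else is direct substitution of the earlier lemmas.
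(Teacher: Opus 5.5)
Your proposal is correct and follows the paper's proof. The lower bound comes from the Schwarz bound applied to every lift, and the upper bound from the product-horizontal lift with the Fu--Zhang estimate, uniform over a finite cover. The normalized bounds then follow by scaling, and $|D|\le C$ from $c^m\le\det H_t/\det\omega_Y\le C^m$; your well-definedness remark matches the Schur-complement discussion the paper gives right after the lemma.
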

\begin{proof}
The Schwarz bound in Lemma~\ref{ii:lem:basic} applies to every
lift of a base vector, and gives the lower bound in
\eqref{ii:eq:SQ}. The product-horizontal lift and
Lemma~\ref{ii:lem:horizontal} give the upper bound.
Scaling \eqref{ii:eq:SQ} gives the bounds for $H$, while
\[
 c^m\le\frac{\det H_t}{\det\omega_Y}\le C^m
\]
gives the bound for $D$.
\end{proof}

In product coordinates, with fibre coordinates listed first, write
\[
 \bigl(g_{I\bar J}(t)\bigr)
   =\begin{pmatrix}A&B\\ B^*&E\end{pmatrix}.
\]
Here this matrix consists of the Hermitian coefficients of
$\omega(t)$. Then
\begin{equation}\label{ii:lim:eq:schur}
 H_t=E-B^*A^{-1}B,\qquad
 \det \bigl(g_{I\bar J}(t)\bigr)=(\det A)(\det H_t).
\end{equation}
For every smooth real function $a$ on the base,
\begin{equation}\label{ii:pre:eq:base-contractions}
 \begin{aligned}
 |\partial\Phi^*a|_{\widetilde{\omega}}^2
   &=(H^{-1})^{\alpha\bar\beta}a_\alpha a_{\bar\beta},\\
 \tr_{\widetilde{\omega}}\Phi^*\omega_Y&=\tr_H\omega_Y.
 \end{aligned}
\end{equation}

For a lift $(\zeta,\xi)$, completing the square gives
\[
 \begin{pmatrix}\zeta\\\xi\end{pmatrix}^{\!*}
 \begin{pmatrix}A&B\\ B^*&E\end{pmatrix}
 \begin{pmatrix}\zeta\\\xi\end{pmatrix}
 = (\zeta+A^{-1}B\xi)^*A(\zeta+A^{-1}B\xi)
   +\xi^*(E-B^*A^{-1}B)\xi.
\]
Taking the infimum in $\zeta$ gives explicitly
\[
 \inf_\zeta\left|(\zeta,\xi)\right|_{\omega(t)}^2
 =\xi^*H_t\xi,\qquad \zeta_{\min}=-A^{-1}B\xi.
\]
The corresponding matrix factorization is
\[
 \begin{pmatrix}A&B\\ B^*&E\end{pmatrix}
 =\begin{pmatrix}I&0\\B^*A^{-1}&I\end{pmatrix}
  \begin{pmatrix}A&0\\0&H_t\end{pmatrix}
  \begin{pmatrix}I&A^{-1}B\\0&I\end{pmatrix}.
\]
The two triangular factors have determinant one, so
$\det\bigl(g_{I\bar J}(t)\bigr)=(\det A)(\det H_t)$.
Block inversion shows that the horizontal block of the inverse
normalized metric is $H^{-1}$, proving
\eqref{ii:pre:eq:base-contractions}.

For the fibre geometry we use the induced normalized metric
$h_q$ and its probability measure $\nu_{q,s}$:
\begin{equation}\label{ii:pre:eq:fibre-notation}
 h_q(s)=\widetilde{\omega}(s)|_{F_q},\qquad
 d\nu_{q,s}=\frac{h_q(s)^{n-m}}{\int_{F_q}h_q(s)^{n-m}}.
\end{equation}
For a probability measure $\nu$ and a real function $u\in L^2(\nu)$,
our mean and variance conventions are
\[
 \bar u_\nu=\int u\,d\nu,\qquad
 \Var_\nu u:=\int\left|u-\int u\,d\nu\right|^2\,d\nu.
\]
This function variance is distinct from Bamler's metric variance
\[
 \Var_g(\mu):=\iint d_g(x,y)^2\,d\mu(x)\,d\mu(y).
\]

The normalized fibre metrics satisfy
\begin{equation}\label{ii:eq:vol}
 [h_q(s)]=2\pi c_1(F_q),\qquad
 \Vol_{\omega(t)|_{F_q}}F_q=\tau^{n-m} V_F.
\end{equation}
Their complex and real volumes are independent of $q,s$ and equal to
\begin{equation}\label{ii:lim:eq:volumes}
 V_F=\frac1{(n-m)!}\int_{F_q}h_q^{n-m}
     =\frac{(2\pi)^{n-m}}{(n-m)!}\int_F c_1(F)^{n-m},
 \qquad V=2^{n-m}V_F.
\end{equation}
Here $V$ is the volume of the real metric $2g_{h_q}$, and
\[
 d\nu_{q,s}
 =\frac{dV_{h_q}}{V_F}
 =\frac{dV_{\omega(t)|_{F_q}}}
        {\Vol_{\omega(t)|_{F_q}}F_q}.
\]
The normal bundle of $F_q$ is the trivial bundle
$F_q\times T_qY$, so adjunction gives
$c_1(X)|_{F_q}=c_1(F_q)$. Restriction of the class identity
\eqref{ii:eq:class} yields
$[\omega(t)|_{F_q}]=2\pi\tau c_1(F_q)$.
This proves \eqref{ii:eq:vol} and the complex-volume formula.
The real-volume formula follows from \eqref{ii:eq:real}; constant
rescaling also leaves the normalized fibre measure unchanged.

\begin{lemma}[Tube volume bound]\label{ii:pre:lem:volumes}
For all sufficiently small $r$, uniformly in $q$ and $t$,
\begin{equation}\label{ii:eq:tubevolume}
 \Vol_{\omega(t)}\Phi^{-1}(B_{\omega_Y}(q,r))
       \le C\tau^{n-m} r^{2m}.
\end{equation}
In the local setting this bound holds uniformly on each
fixed compact subset of the regular product locus.
\end{lemma}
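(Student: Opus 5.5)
The plan is to compute the volume of the tube by integrating the fibre volumes against the base, using the Schur-complement determinant identity \eqref{ii:lim:eq:schur} and the quotient bound of Lemma~\ref{ii:pre:lem:quotient}. For $r$ small, $B_{\omega_Y}(q,r)$ lies inside one of finitely many coordinate balls $B$ with $\Phi^{-1}(B)\simeq B\times F$; we choose these by a Lebesgue-number argument on compact $Y$, so that all constants are uniform in $q$. In product coordinates (fibre first, base second) we write
\[
 \omega(t)^n/n! = \det\bigl(g_{I\bar J}(t)\bigr)\,dV_{\mathrm{Euc}}
 =(\det A)(\det H_t)\,dV_{\mathrm{Euc}},
\]
up to the fixed normalization constants. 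Here $\det A$ times the fibre Euclidean volume is exactly the induced volume form $\omega(t)|_{\{y\}\times F}^{n-m}/(n-m)!$ of the fibre through the point, since $A$ is the restriction of $\omega(t)$ to vertical vectors.

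Next we apply Fubini over the base. This gives
\[
 \Vol_{\omega(t)}\Phi^{-1}(B_{\omega_Y}(q,r))
 =\int_{B_{\omega_Y}(q,r)}\int_{F_y}\frac{\det H_t}{\det\omega_Y}\,
 dV_{\omega(t)|_{F_y}}\; dV_{\omega_Y}(y),
\]
up to a dimensional constant. The integrand ratio is $e^{D}$, and $|D|\le C$ by Lemma~\ref{ii:pre:lem:quotient}, so it is bounded by $C$. The inner integral is then at most $C\,\Vol_{\omega(t)|_{F_y}}F_y=C\tau^{n-m}V_F$ by the cohomological identity \eqref{ii:eq:vol}. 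Finally, $\Vol_{\omega_Y}B_{\omega_Y}(q,r)\le Cr^{2m}$ for small $r$, uniformly in $q$, since $\omega_Y$ is a fixed smooth Kähler metric on compact $Y$. Combining these three bounds gives \eqref{ii:eq:tubevolume}.

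The one point needing care is uniformity. The upper bound $H_t\le C\omega_Y$ comes from Lemma~\ref{ii:lem:horizontal}, which is stated only on inner balls $B'\Subset B$ of a fixed finite collection of charts. We therefore fix the finite cover by inner balls first and take $r$ smaller than half its Lebesgue number, so every tube lies over some $B'$ and the quotient bound applies with a uniform constant.

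In the local setting of Remark~\ref{ii:rem:local}, the same argument runs over a finite cover of the compact set $K$ by inner product charts, and the constants then depend on $K$. The only genuine input is the horizontal upper bound of Fu--Zhang. Everything else is the algebraic identity \eqref{ii:lim:eq:schur} and the fibre-volume identity.
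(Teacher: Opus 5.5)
Your proof is correct, but it takes a different route from the paper's. The paper does no computation at this point. It quotes the tube estimate \cite[Lemma~9.2 and equation~(9.5)]{JST}, which is formulated with a Fubini--Study reference metric from a projective embedding of $Y$ and a divisor weight. It then only converts that estimate to $\omega_Y$-balls: the two base metrics are uniformly equivalent on buffered product charts, the weight is bounded below there, and an $\omega_Y$-ball sits inside an auxiliary ball of comparable radius. A finite cover gives uniform constants.

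You instead derive the bound from results already inside the paper. The Schur factorization \eqref{ii:lim:eq:schur} gives the exact fibre-integration formula $\Vol_{\omega(t)}\Phi^{-1}(B)=\int_B\int_{F_y}e^{D}\,dV_{\omega(t)|_{F_y}}\,dV_{\omega_Y}(y)$; in fact no dimensional constant appears. The upper half of Lemma~\ref{ii:pre:lem:quotient} bounds $e^{D}$, and \eqref{ii:eq:vol} supplies the factor $\tau^{n-m}V_F$.

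What your route buys:
\begin{enumerate}
\item It is self-contained, with no change of reference metric and no weight bookkeeping.
\item It shows that only the determinant bound $\det H_t\le C\det\omega_Y$ is needed, which is equivalent to $\omega(t)^n\le C\,\omega(t)^{n-m}\wedge\Phi^*\omega_Y^m$. The lower quotient bound is not used.
\item It makes the source of the $\tau^{n-m}$ factor transparent: it is the cohomological fibre volume.
\end{enumerate}

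The paper's route is shorter on the page and inherits the constants of \cite{JST}. Both ultimately rest on the same horizontal upper bound of \cite{FuZhang}.

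Your Lebesgue-number step is harmless but more than you need. Lemma~\ref{ii:pre:lem:quotient} already holds globally, so the only role of the step is to put the whole tube inside a single product chart for Fubini.
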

\begin{proof}
The tube bound is \cite[Lemma~9.2 and equation~(9.5)]{JST}.
To use it with $\omega_Y$, choose an auxiliary projective embedding
of $Y$. On each buffered product chart its induced metric is
uniformly equivalent to $\omega_Y$, and the divisor weight in the
cited estimate is bounded away from zero. An $\omega_Y$-ball is
contained in an auxiliary-metric ball of a fixed multiple of its
radius, which proves \eqref{ii:eq:tubevolume}. A finite cover
gives uniform constants. The same argument applies on compact
subsets of the regular product locus.
\end{proof}

\section{Horizontal Hessian estimates}\label{ii:sec:horizontal}

The location of a Ricci vertex is governed by the terminal base
potential. For this purpose a bound for its complex Hessian is
not sufficient: a quadratic lower support requires control of the
full real Hessian. This additional control comes from a simple
feature of the bundle coordinates, namely that horizontal
derivatives of a suitably chosen potential satisfy the heat
equation. In Section~\ref{ii:sec:terminal}, these estimates will
give a $C^{1,1}$ terminal potential on the base. Throughout this section we use the
normalizations and known estimates of
Section~\ref{ii:sec:preliminaries}, with $T=1$.

In a product chart, the fixed fibre geometry can be absorbed into
the reference forms of the potential equation. The reference
density is then independent of the base variables, and horizontal
differentiation gives an exact heat equation. The Bochner formula
along the flow turns this identity into the real Hessian estimate
of Proposition~\ref{ii:prop:horizontalhessian}; the cancellation
of the Ricci terms is essential here.

\begin{lemma}\label{ii:lem:heat}
On the product of a sufficiently small coordinate ball $B$ with $F$, there are fixed smooth real functions $\psi_0,H_0$, a K\"ahler form $\omega_F\in2\pi c_1(F)$, and a positive volume form $\Omega_F$ with $\Ric(\Omega_F)=\omega_F$, such that
\begin{equation}\label{ii:eq:localMA}
 \Psi=\phi+\psi_0+tH_0,\quad
 \omega(t)=\tau\omega_F+\omega_E+\ddc\Psi,\quad
 \dot\Psi=\log\frac{\omega(t)^n}{\Omega_F\wedge\dd V_E}.
\end{equation}
Here $\omega_E$ is a fixed Euclidean K\"ahler form on $B$ and $\dd V_E=\omega_E^m/m!$. For every real base coordinate $y^a$,
\begin{equation}\label{ii:eq:horizontalheat}
 \Box_{\omega} \partial_{y^a}\Psi=0.
\end{equation}
\end{lemma}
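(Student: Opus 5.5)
On $\Phi^{-1}(B)\cong B\times F$ I would rewrite the Monge--Amp\`ere equation \eqref{ii:eq:MA} with reference data independent of the base variables. All the fixed data will be found by $\partial\bar\partial$-lemmas, and the heat equation then follows by differentiating in the base coordinates.

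\textbf{Reference forms.} Let $\pi_F:B\times F\to F$ be the projection. Fix a K\"ahler form $\omega_F\in 2\pi c_1(F)$ on $F$, pulled back by $\pi_F$, and a volume form $\Omega_F$ on $F$ with $\Ric(\Omega_F)=\omega_F$. Choose $\omega_E$ to be the flat form on $B$ with constant coefficients equal to those of $\omega_Y$ at the centre, pulled back to the product.

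Since $B$ is contractible, $H^2(B\times F)\cong H^2(F)$. The adjunction argument in Section~\ref{ii:pre:quotient} gives $[\omega_0]|_{F_q}=2\pi c_1(F)$, because $[\omega_0]=2\pi c_1(X)+\Phi^*[\omega_Y]$ and $\Phi^*[\omega_Y]$ restricts trivially to fibres. Hence $\omega_0$ and $\pi_F^*\omega_F$ are cohomologous on $B\times F$. On a small ball, $\Phi^*\omega_Y$ and $\omega_E$ are both pulled back from $B$ and differ by $\ddc$ of a base function. The $\partial\bar\partial$-lemma on the product (after shrinking $B$) then gives smooth functions $\psi_0,H_0$ with
\[
\omega_0=\omega_F+\ddc\psi_0,\qquad \Phi^*\omega_Y-\omega_0=\omega_E-\omega_F+\ddc H_0 .
\]
With these,
\[
\widehat\omega_t=(1-t)\omega_0+t\Phi^*\omega_Y=\tau\omega_F+\omega_E+\ddc(\psi_0+tH_0)-\bigl(\omega_E-\omega_0\bigr)\big|_{\text{adjusted}},
\]
so I would absorb the $\omega_E$ mismatch at $t=0$ into $\psi_0$ by writing $\omega_0=\omega_F+\omega_E+\ddc\psi_0$. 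This is possible since $\omega_E$ is exact on $B$.

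\textbf{Density.} Next I compare $\Omega$ with $\Omega_F\wedge \dd V_E$. One has $\Ric(\Omega_F\wedge\dd V_E)=\omega_F$, while $\Ric(\Omega)=\omega_0-\Phi^*\omega_Y$. The difference is
\[
-\ddc\log\frac{\Omega}{\Omega_F\wedge \dd V_E}=\omega_0-\Phi^*\omega_Y-\omega_F,
\]
which by the above equals $-\ddc H_0$ plus the $\omega_E$ terms. I would choose $H_0$ so that, up to an additive constant absorbed into $H_0$, exactly
\[
\log\frac{\Omega}{\Omega_F\wedge \dd V_E}=-H_0 .
\]
Consistency of the two choices is the one bookkeeping point that needs checking: one first defines $H_0$ by this density relation, and then verifies the $\ddc H_0$ identity from the Ricci forms. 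Then $\dot\Psi=\dot\phi+H_0=\log\bigl(\omega^n/(\Omega_F\wedge\dd V_E)\bigr)$, which is \eqref{ii:eq:localMA}.

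\textbf{Heat equation.} Differentiate \eqref{ii:eq:localMA} in a real base coordinate $y^a$. The forms $\omega_F$ and $\omega_E$ have coefficients independent of $y$, because $\omega_F$ is pulled back from $F$ and $\omega_E$ is constant. The density $\Omega_F\wedge\dd V_E$ also has $y$-independent coefficient in the product coordinates, and $y^a$-derivatives commute with $\ddc$. Hence
\[
\partial_t\partial_{y^a}\Psi=\tr_{\omega}\ddc\partial_{y^a}\Psi=\Delta_\omega\partial_{y^a}\Psi,
\]
which is \eqref{ii:eq:horizontalheat}.

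The main obstacle is the global-on-fibre $\partial\bar\partial$-lemma on $B\times F$ (not compact). I would handle it by noting that a closed real $(1,1)$-form $\alpha$ on $B\times F$ that is exact has the form $\alpha=\ddc f$. To see this, first solve $\Delta_{\omega_F}$-type equations fibrewise, so that $\alpha$ minus $\ddc$ of a smooth family of fibre potentials is basic (pulled back from $B$, by fibre compactness and $H^{1}(F)=0$ for Fano $F$). Then apply the local $\ddc$-lemma on the ball $B$. Smooth dependence on $y$ of the fibrewise solution follows from elliptic theory with parameters.
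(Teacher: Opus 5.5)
Your outline matches the paper's: a flat $\omega_E$, the decomposition $\omega_0=\omega_F+\omega_E+\ddc\psi_0$, $H_0$ taken from the density ratio, and then differentiation of a $y$-independent Monge--Amp\`ere equation. Your heat-equation step is exactly the paper's. However, the density step, which you single out as the one point needing a check, fails as written because of a sign error. With $\log\frac{\Omega}{\Omega_F\wedge\dd V_E}=-H_0$ and $\dot\phi=\log(\omega^n/\Omega)$, one gets
\[
 \dot\phi+H_0=\log\frac{\omega(t)^n}{\Omega}-\log\frac{\Omega}{\Omega_F\wedge\dd V_E}
 \neq\log\frac{\omega(t)^n}{\Omega_F\wedge\dd V_E}.
\]
Also $\ddc H_0=\omega_0-\Phi^*\omega_Y-\omega_F$, which has the wrong sign for $\omega(t)=\tau\omega_F+\omega_E+\ddc\Psi$.

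The forced choice is $H_0=\log\bigl(\Omega/(\Omega_F\wedge\dd V_E)\bigr)$ exactly. An added constant would survive in the $\dot\Psi$ identity, and producing $H_0$ by a $\partial\bar\partial$-lemma would leave a pluriharmonic ambiguity. Since $\dd V_E$ is flat, this $H_0$ satisfies $\ddc H_0=-\Ric(\Omega)+\Ric(\Omega_F\wedge\dd V_E)=\Phi^*\omega_Y-\omega_0+\omega_F$, with no $\omega_E$ term and no $\partial\bar\partial$-lemma. Then $\tau\omega_F+\omega_E+\ddc(\phi+\psi_0+tH_0)=\tau\omega_0+t\Phi^*\omega_Y+\ddc\phi=\omega(t)$. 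Discard your first pair of identities: $\Phi^*\omega_Y-\omega_0=\omega_E-\omega_F+\ddc H_0$ belongs to the decomposition $\omega_0=\omega_F+\ddc\psi_0$. Combined with your final $\psi_0$ it gives $\ddc(\psi_0+H_0)=\Phi^*\omega_Y-2\omega_E$, which is off by $\omega_E$ at $t=1$.

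Your route to $\omega_0-\omega_F-\omega_E=\ddc\psi_0$ differs from the paper's and is valid, but its key step should be written out. The paper writes the exact form as $\dd\alpha$ with $\alpha$ real. It kills $\alpha^{0,1}$ using $H^{0,1}(B\times F)\simeq\mathcal O(B)\otimes H^1(F,\mathcal O_F)=0$, which relies on the Stein base, Kodaira vanishing and the product formula for coherent cohomology; this gives $\psi_0=2\operatorname{Im}b$ in one line.

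In your version, after subtracting your fibre potentials $f$ (smooth in $y$ since $\Delta_{\omega_F}$ is $y$-independent), the form $\gamma=\alpha-\ddc f$ vanishes on fibres. Basicness then comes from closedness, in three steps:
\begin{enumerate}
\item $\bar\partial\gamma=0$ gives $\partial_{\bar z^k}\gamma_{i\bar\nu}=\partial_{\bar y^\nu}\gamma_{i\bar k}=0$. So the fibre restriction of $\iota_{\partial/\partial\bar y^\nu}\gamma$ is a holomorphic $1$-form on $F$, hence zero because $h^{1,0}(F)=0$; this is where your $H^1(F)=0$ enters.
\item Reality then kills $\gamma_{\mu\bar j}$.
\item Closedness now gives $\partial_{z^k}\gamma_{\mu\bar\nu}=\partial_{\bar z^k}\gamma_{\mu\bar\nu}=0$, so $\gamma$ is pulled back from $B$.
\end{enumerate}
The $\ddc$-lemma on the ball then finishes. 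This trades the sheaf-theoretic product formula for the compact $\partial\bar\partial$-lemma with parameters plus the vanishing of holomorphic $1$-forms. The paper's argument is shorter given its citations.
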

\begin{proof}
Fix a K\"ahler representative $\omega_F$ of $2\pi c_1(F)$. The $\partial\bar\partial$-lemma on $F$ \cite[Chapter~VI, Lemma~8.6]{Demailly} gives a positive volume form $\Omega_F$ with the required Ricci form. Pull these objects back to $B\times F$. The inclusion of one fibre is a homotopy equivalence because $B$ is contractible. The restriction of $\omega_0-\omega_F-\omega_E$ to that fibre has zero cohomology class by \eqref{ii:eq:vol}; hence this closed real $(1,1)$-form is exact on $B\times F$.

Kodaira vanishing gives
$H^1(F,\mathcal O_F)=H^1(F,K_F\otimes(-K_F))=0$
\cite[Chapter~VII, Theorem~3.3(a)]{Demailly}.
The coherent cohomology groups of the compact manifold $F$ are
finite dimensional and Hausdorff, so the completed tensor product
in the product-cohomology formula agrees here with the ordinary
tensor product.
Since $B$ is Stein, \cite[Chapter~IX, Corollary~5.22(a);
Chapter~IV, (6.16)]{Demailly} gives
\[
 H^{0,1}(B\times F)
 \simeq H^1(B\times F,\mathcal O)
 \simeq\mathcal O(B)\otimes H^1(F,\mathcal O_F)=0.
\]
To obtain a real potential, write the exact real $(1,1)$-form as
$\dd\alpha$ with $\alpha$ real. Its $(0,2)$-component gives
$\bar\partial\alpha^{0,1}=0$. The vanishing just established gives
$\alpha^{0,1}=\bar\partial b$. By reality,
$\alpha^{1,0}=\partial\bar b$, and hence
\[
 \dd\alpha=\partial\bar\partial b+\bar\partial\partial\bar b
 =\partial\bar\partial(b-\bar b)=\ddc(2\operatorname{Im}b).
\]
Thus there is a fixed smooth real function $\psi_0$ with
\[
 \omega_0=\omega_F+\omega_E+\ddc\psi_0.
\]
Set $H_0=\log\bigl(\Omega/(\Omega_F\wedge\dd V_E)\bigr)$. Since $\dd V_E$ is flat,
\[
 \ddc H_0=-\Ric(\Omega)+\Ric(\Omega_F\wedge\dd V_E)
             =\Phi^*\omega_Y-\omega_0+\omega_F.
\]
It follows that
\begin{align*}
 \tau\omega_F+\omega_E+\ddc(\phi+\psi_0+tH_0)
 &=\tau\omega_F+\omega_E+\ddc\phi
       +\omega_0-\omega_F-\omega_E\\
 &\quad+t(\Phi^*\omega_Y-\omega_0+\omega_F)\\
 &=\tau\omega_0+t\Phi^*\omega_Y+\ddc\phi=\omega(t),\\
 \dot\Psi&=\dot\phi+H_0
       =\log\frac{\omega(t)^n}{\Omega_F\wedge\dd V_E}.
\end{align*}
This proves \eqref{ii:eq:localMA} with all constants fixed in time.

In product coordinates $(z,y)$ the reference metric $\tau\omega_F+\omega_E$ and reference volume $\Omega_F\wedge\dd V_E$ have coefficients independent of $y$. For $f=\partial_{y^a}\Psi$, differentiation of the last equation gives
\[
 f_t=g^{I\bar J}\partial_{y^a}g_{I\bar J}
     =g^{I\bar J}\partial_I\partial_{\bar J}f=\Delta_\omega f.
\]
The capital indices range over all fibre and base coordinates, including the mixed terms $g^{i\bar\beta}f_{i\bar\beta}$ and $g^{\alpha\bar j}f_{\alpha\bar j}$.
\end{proof}

\begin{proposition}\label{ii:prop:horizontalhessian}
On each fixed inner product chart,
\begin{equation}\label{ii:eq:hessian}
 |D_y\phi|+|D_y^2\phi|\le C,\qquad
 |\partial(\partial_{y^a}\Psi)|_\omega\le C.
\end{equation}
The first two norms are Euclidean norms of the real base derivatives in the fixed product coordinates.
\end{proposition}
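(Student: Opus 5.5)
The plan is to exploit the exact heat equation \eqref{ii:eq:horizontalheat} for $f_a=\partial_{y^a}\Psi$, together with the Bochner formula along the unnormalized flow. Since $\partial_t\omega=-\Ric(\omega)$, the Ricci terms cancel:
\[
 \Box_\omega|\partial f|_\omega^2=-|\nabla\partial f|_\omega^2-|\nabla\bar\partial f|_\omega^2\qquad\text{whenever }\Box_\omega f=0.
\]
All localizations are done with base cutoffs $\eta=\Phi^*\rho$, where $\rho$ is a fixed cutoff on $B$ equal to $1$ on the inner ball. By the Schwarz bound of Lemma~\ref{ii:lem:basic} and the Hessian of $\rho$,
\[
 |\partial\eta|^2_\omega\le C\tr_\omega\Phi^*\omega_Y\le C,\qquad |\Delta_\omega\eta|\le C\tr_\omega\Phi^*\omega_Y\le C,
\]
so cutoffs cost only bounded errors. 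At $t=0$ every quantity below is bounded by the smoothness of $\omega_0,\psi_0,H_0$. On the lateral boundary the cutoff vanishes, because $\Phi^{-1}$ of a base ball is a full tube $B\times F$ and no fibre boundary appears.

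\emph{Step 1 (first derivatives).} I would first bound $f_a$ itself, i.e.\ $|D_y\phi|\le C$; this is the step I expect to be the main obstacle, since the heat equation alone does not give a local $L^\infty$ bound without a barrier. My first attempt uses the $y$-independence of the reference data. For small $h$, the translate $\Psi_h(y)=\Psi(y+h)$ solves the same equation \eqref{ii:eq:localMA} on a slightly smaller chart, and $|\Psi|\le C$ by Lemma~\ref{ii:lem:basic} since $\psi_0,H_0$ are fixed. Hence $w=\Psi_h-\Psi$ is bounded, and it satisfies the linear equation $\partial_tw=\int_0^1\tr_{\omega_\sigma}\ddc w\,d\sigma$, where $\omega_\sigma$ interpolates $\omega$ and its translate.

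To upgrade from $|w|\le C$ to $|w|\le C|h|$, I would apply a cutoff maximum principle to $\eta w/|h|$ with a barrier built from $\Psi$ and $t$. If this fails, I would fall back on a Cheng--Yau type quantity: with $f$ known to be bounded,
\[
 Q=\eta^2|\partial f|^2_\omega+Af^2
\]
satisfies $\Box_\omega Q\le C-(2A-C)|\partial f|^2_\omega$. Here the cross term $4\eta\langle\partial\eta,\partial|\partial f|^2\rangle$ is absorbed by $\eta^2|\nabla\partial f|^2$ via Cauchy--Schwarz, which is why $\eta^2$ rather than $\eta$ is used. For $A$ large the maximum principle gives $Q\le C$.

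\emph{Step 2 (gradient of $f$).} Once $|f_a|\le C$ on a middle chart, the quantity $Q$ above yields $|\partial f_a|_\omega\le C$ on the inner chart. This is the second estimate in \eqref{ii:eq:hessian}.

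\emph{Step 3 (real Hessian).} I would differentiate the heat equation once more. Since the reference data are independent of $y$,
\[
 \Box_\omega\,\partial_{y^a}^2\Psi=-\bigl|\ddc\partial_{y^a}\Psi\bigr|^2_\omega\le0 .
\]
Thus $\partial^2_{y^a}\Psi$ is a supersolution, and a cutoff minimum principle, with the error controlled by Step~2 through $\langle\partial\eta,\partial f_a\rangle$, gives $\partial^2_{y^a}\Psi\ge-C$ for every real direction $a$. Conversely, for a complex base direction $\alpha$ with real parts $y^a,y^{a'}$, we have $\partial_{y^a}^2\Psi+\partial_{y^{a'}}^2\Psi=4\Psi_{\alpha\bar\alpha}$. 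Moreover $\Psi_{\alpha\bar\alpha}\le C$, because $\omega(t)|_{B'\times\{z\}}\le C\omega_Y$ by Lemma~\ref{ii:lem:horizontal} and $\omega_E$ is fixed. Hence each pure second derivative is also bounded above. Applying this to all directions $e_a\pm e_b$ bounds the mixed derivatives by polarization. Since $\psi_0$ and $tH_0$ are fixed and smooth, this gives $|D_y^2\phi|\le C$.
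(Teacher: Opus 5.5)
\textbf{Gap in Step~1.} Your Step~2 matches the paper's argument: the quantity $\chi^2|\partial f|_\omega^2+Kf^2$ with the maximum principle on the full tube $B\times F$. But Step~2 depends on Step~1, and Step~1 is not proved.

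The difference-quotient route is left unspecified. The naive barriers fail: $w=\Psi_h-\Psi$ is only $O(1)$ on the lateral boundary. A barrier that is $O(|h|)$ inside must therefore rise to $O(1)$ through a cutoff, and the bounded cutoff Laplacians then contribute $O(1)$ errors, not $O(|h|)$. You do not explain how a barrier ``built from $\Psi$ and $t$'' avoids this.

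Your fallback is circular. The term $Af^2$ in $Q$ controls the lateral boundary only if $|f|\le C$ there, which is exactly what you are trying to prove.

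The missing idea is elliptic, not parabolic. On a horizontal slice $B\times\{z\}$ the form $\tau\omega_F$ restricts to zero, so $\omega(t)|_{B\times\{z\}}=\omega_E+\ddc_y\Psi$. Positivity and Lemma~\ref{ii:lem:horizontal} give $0<\omega_E+\ddc_y\Psi\le C\omega_E$, hence $|\Delta_{\R^{2m},y}\Psi|\le C$. Combined with $|\Psi|\le C$, interior $W^{2,p}$ estimates and Sobolev embedding with $p>2m$ give $|D_y\Psi|\le C$, uniformly in $z$ and $t$. You already use the upper half of this slice bound in Step~3.

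\textbf{Step~3 as written.} Step~3 also fails. With $v=\partial_{y^a}^2\Psi$, the identity $\Box_\omega v=-|\ddc f|_\omega^2\le0$ makes $v$ a subsolution. The maximum principle therefore gives an upper bound on $v$, not $v\ge-C$. Combined with $\Psi_{\alpha\bar\alpha}\le C$, you obtain only upper bounds on the pure second derivatives.

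You could try to repair this with the lower bound $\Psi_{\alpha\bar\alpha}\ge -C$ coming from positivity. However, the cutoff error $\langle\partial\eta,\partial v\rangle_\omega$ involves second derivatives of $f$, so Step~2 alone does not control it.

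None of this is needed. The coordinate fields satisfy $|\partial_{y^b}|_{g(t)}\le C$ by Lemma~\ref{ii:lem:horizontal}, so
$|\partial_{y^b}\partial_{y^a}\Psi|=|\dd f(\partial_{y^b})|\le|\partial f|_\omega\,|\partial_{y^b}|_g\le C$
follows immediately from Step~2. This is how the paper obtains the real Hessian bound.
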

\begin{proof}
Choose nested base balls with compact inclusions and always take their products with the entire compact fibre. The bounds for $\phi$ and the fixed corrections in \eqref{ii:eq:localMA} give $|\Psi|\le C$ on each of these products. For fixed $z\in F$, restriction to the horizontal slice gives
\[
 0<g_{E,\alpha\bar\beta}+\Psi_{\alpha\bar\beta}
      \le Cg_{E,\alpha\bar\beta}
\]
by Lemma~\ref{ii:lem:horizontal}. Taking the Euclidean trace and recalling that the real Euclidean Laplacian is four times the complex trace in standard coordinates, we obtain $|\Delta_{\R^{2m},y}\Psi|\le C$. The interior Euclidean elliptic estimate, followed by the Sobolev embedding for any fixed $p>2m$, gives
\[
 \|D_y\Psi(\cdot,z,t)\|_{L^\infty(B_1)}
 \le C\bigl(\|\Psi(\cdot,z,t)\|_{L^p(B_2)}
       +\|\Delta_{\R^{2m},y}\Psi(\cdot,z,t)\|_{L^p(B_2)}\bigr)
 \le C
\]
whenever $B_1\Subset B_2$ are two of the fixed balls; see \cite[Chapters~7 and~9]{GilbargTrudinger}. The operator in this estimate is the fixed Euclidean Laplacian. Its constants are independent of $z$ and $t$, so the first horizontal derivatives are uniformly bounded.

Fix a real base coordinate and put
\[
 f=\partial_{y^a}\Psi,\quad E=|\partial f|_\omega^2,\quad
 \mathcal H=|\nabla\partial f|_\omega^2+
             |\nabla\bar\partial f|_\omega^2.
\]
Here the two summands in $\mathcal H$ are the squared norms of the $(2,0)$ and $(1,1)$ covariant Hessians of $f$. The heat equation and the evolving Bochner formula give
\begin{align*}
 \Box_{\omega}  E&=2\operatorname{Re}\langle\partial(\Box_{\omega}  f),\partial f\rangle_\omega
       -\mathcal H=-\mathcal H,\\
 \Box_{\omega} (f^2)&=2f\Box_{\omega}  f-2|\partial f|_\omega^2=-2E.
\end{align*}
The evolving Bochner identity used here is the unnormalized form of the first identity in \cite[proof of Lemma~5.1]{JST}.
At normal coordinates,
\[
 E_\ell=\sum_i(f_{i\ell}f_{\bar i}+f_i f_{\bar i\ell}),
 \qquad |\partial E|_\omega^2\le2E\mathcal H.
\]

Take a fixed smooth base cutoff $0\le\chi\le1$ supported inside a ball on which $|f|\le C$, with $\chi=1$ on a smaller ball. The signed base Hessian of $\chi^2$ and the tensor $\partial\chi\otimes\bar\partial\chi$ are bounded by fixed multiples of $\omega_Y$. The trace estimate in Lemma~\ref{ii:lem:basic} therefore gives
\[
 |\partial\chi|_\omega^2+|\Delta_\omega\chi^2|\le C.
\]
The product rule, the last gradient estimate, and Young's inequality yield
\begin{align*}
 \Box_{\omega} (\chi^2E)
 &=-\chi^2\mathcal H-E\Delta_\omega\chi^2
      -4\chi\operatorname{Re}\langle\partial\chi,\partial E\rangle_\omega\\
 &\le-\chi^2\mathcal H+C E
          +4\sqrt2\,\chi|\partial\chi|_\omega\sqrt{E\mathcal H}\\
 &\le-\tfrac12\chi^2\mathcal H+
       \bigl(C+16|\partial\chi|_\omega^2\bigr)E
 \le-\tfrac12\chi^2\mathcal H+C_0E.
\end{align*}
Choose $K$ so that $2K>C_0$ and set $Q=\chi^2E+Kf^2$. We then have
\[
 \Box_{\omega}  Q\le-\tfrac12\chi^2\mathcal H-(2K-C_0)E\le0.
\]
For any $t_*<1$, apply the maximum principle on the closed product of the support ball with $F$ and the time interval $[0,t_*]$. On its spatial boundary $\chi=0$, so $Q=Kf^2\le C$; there is no fibre boundary. At $t=0$, $Q$ is uniformly bounded because the initial data and all reference corrections are fixed and smooth. Consequently
\[
 \sup Q\le
 \max\left\{\sup Q(\cdot,0),\,
       K\sup_{\text{spatial boundary}\times[0,t_*]}f^2\right\}\le C.
\]
The right-hand side is independent of $t_*$. Letting $t_*\uparrow1$ and using $\chi=1$ proves $E\le C$ on the smaller product.

Finally, the fixed real product-horizontal vector fields $V_b=\partial_{y^b}$ have bounded lengths in $g$ by Lemma~\ref{ii:lem:horizontal}. By \eqref{ii:eq:real},
\[
 |\partial_{y^b}\partial_{y^a}\Psi|
 =|\dd f(V_b)|\le|\nabla f|_g|V_b|_g
 =|\partial f|_\omega|V_b|_g\le C.
\]
Subtracting the fixed derivatives of $\psi_0+tH_0$ proves both base derivative estimates in \eqref{ii:eq:hessian}.
\end{proof}

\section{The limiting base potential}\label{ii:sec:terminal}

The potential becomes constant on each fibre as the singular time
is approached. The horizontal Hessian estimate shows that the
resulting function on the base is $C^{1,1}$, and monotonicity
upgrades pointwise convergence to uniform convergence. A one-sided
comparison with this terminal value will allow the minimum
principle to retain information at the scale $1-t$.

\begin{proposition}\label{ii:prop:terminal}
There is $\psi_\infty\in C^{1,1}(Y)$ such that, uniformly on $X$,
\begin{equation}\label{ii:eq:terminal}
 \phi(t)\longrightarrow\Phi^*\psi_\infty,\qquad
 U(t)\longrightarrow\Phi^*\psi_\infty+n.
\end{equation}
Moreover,
\begin{equation}\label{ii:eq:terminaloneside}
 U\ge\Phi^*\psi_\infty+nt-C\tau^2.
\end{equation}
\end{proposition}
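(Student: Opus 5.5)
The plan is to extract everything from the time‑concavity of $\phi$ supplied by Lemma~\ref{ii:lem:basic}, and to use the horizontal Hessian bound of Proposition~\ref{ii:prop:horizontalhessian} only for regularity of the limit.

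\emph{Concavity in time.} Differentiating $U=\tau\dot\phi+\phi+nt$ gives $U_t=\tau\ddot\phi+n$. Comparing with $U_t=n-\tau R_\omega$ from \eqref{ii:eq:Uident} yields $\ddot\phi=-R_\omega$. Since $R_\omega\ge-C$, the function $f(t)=\phi(t)-\tfrac C2t^2$ is concave in $t$ at every point of $X$. Together with $\dot\phi\le C$ and $|\phi|\le C$, this makes $\phi-Ct$ pointwise nonincreasing and bounded. Hence $\phi(t)$ has a pointwise limit $\phi_\infty$ as $t\to1$, and $\phi_\infty$ is bounded and upper semicontinuous.

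\emph{Fibre constancy.} I would next show that $\phi_\infty$ is constant on fibres, working in a product chart with the potential $\Psi=\phi+\psi_0+tH_0$ of Lemma~\ref{ii:lem:heat}. Restricting \eqref{ii:eq:localMA} to a fibre $\{y\}\times F$ kills $\omega_E$, so $\tau\omega_F+\ddc(\Psi|_{F})\ge0$. Thus $\Psi(\cdot,t)|_F$ is $\tau\omega_F$‑psh, and in particular $\epsilon\omega_F$‑psh for $t\ge1-\epsilon$. Its monotone (after subtracting $Ct$) limit $\phi_\infty+\psi_0+H_0$ is therefore $\epsilon\omega_F$‑psh on $F$ for every $\epsilon>0$, hence psh on the compact manifold $F$, hence constant. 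Since $\psi_0+H_0$ is fixed, some care is needed here. I would choose the normalization in Lemma~\ref{ii:lem:heat} so that these fixed corrections have $\ddc$ vanishing on fibres and are constant along fibres; if that fails, I would redo the argument directly on $\phi$ using $\widehat\omega_t|_{F}=\tau\omega_0|_F$. Either route gives $\phi_\infty=\Phi^*\psi_\infty$.

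\emph{Regularity and uniform convergence.} On each horizontal slice $B'\times\{z\}$, Proposition~\ref{ii:prop:horizontalhessian} gives $\|\phi(\cdot,z,t)\|_{C^{1,1}}\le C$ uniformly in $z$ and $t$. Arzelà--Ascoli along the slice then shows that the pointwise limit $\psi_\infty$ is $C^{1,1}$ with the same bound. In particular $\Phi^*\psi_\infty$ is continuous. We now have continuous functions $\phi(t)-Ct$ decreasing pointwise to the continuous function $\Phi^*\psi_\infty-C$ on the compact $X$, so Dini's theorem upgrades the convergence to uniform convergence.

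\emph{One‑sided bound and convergence of $U$.} Concavity of $f$ gives $\dot f(t)\ge(f(t')-f(t))/(t'-t)$ for $t<t'<1$. Letting $t'\to1$,
\[
\tau(\dot\phi-Ct)\ge\Phi^*\psi_\infty-\phi(t)-\tfrac C2(1-t^2).
\]
Adding $\phi+nt+C\tau t$ gives
\[
U\ge\Phi^*\psi_\infty+nt-\tfrac C2\tau(1+t)+C\tau t=\Phi^*\psi_\infty+nt-\tfrac C2\tau^2,
\]
which is \eqref{ii:eq:terminaloneside}. In the other direction, $\dot\phi\le C$ gives $U\le\phi+nt+C\tau$. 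Combining both bounds with the uniform convergence of $\phi$ yields $U\to\Phi^*\psi_\infty+n$ uniformly.

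The main obstacle is the fibre‑constancy step. One must check carefully that the restriction to each fibre is quasi‑psh with respect to a form of size $O(\tau)$, keeping track of the fixed corrections $\psi_0$ and $tH_0$. One must also justify that a decreasing limit of such functions (which is psh or identically $-\infty$) is not $-\infty$; this is guaranteed by $|\phi|\le C$.
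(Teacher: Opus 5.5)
Your proof is correct. It shares the paper's skeleton: monotonicity of $\phi-Ct$, then Proposition~\ref{ii:prop:horizontalhessian} with Arzel\`a--Ascoli for the $C^{1,1}$ limit, then Dini for uniformity. It differs in two places.

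\emph{Fibre constancy.} In the global setting the paper simply cites Fu--Zhang. Your argument through $\ddc_{F_q}\phi(t)\ge-\tau\omega_0|_{F_q}$ and decreasing limits of quasi-psh functions is exactly the one the paper records in Remark~\ref{ii:rem:local-potential} for the local setting, so your proof is self-contained. The passage from ``$\epsilon\omega_0$-psh for every $\epsilon$'' to psh still deserves one line, e.g.\ a uniform limit of $\phi_\infty+\epsilon h$ with $h$ a local potential. Your hedge about the corrections $\psi_0+tH_0$ is unnecessary. The proof of Lemma~\ref{ii:lem:heat} gives $\ddc(\psi_0+H_0)=\Phi^*\omega_Y-\omega_E$, which vanishes on fibres. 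Hence $\psi_0+H_0$ is fibrewise pluriharmonic and thus fibre-constant, so the $\Psi$-route already works as stated.

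\emph{Convergence of $U$.} The paper shows that $W_0+\tfrac{C_R}{2}\tau^2$ decreases. It identifies the pointwise limit by proving $\tau\dot\phi\to0$ through a contradiction with $|\phi|\le C$, and then applies Dini a second time. You instead use the secant inequality for the concave function $\phi-\tfrac C2t^2$. This compares $\phi+\tau\dot\phi$ directly with $\lim\phi$ and yields \eqref{ii:eq:terminaloneside} at once. The bound $\dot\phi\le C$ gives the matching upper bound $U\le\phi+nt+C\tau$, so uniform convergence of $U$ follows from that of $\phi$ by sandwiching. Both arguments rest on the same fact $\ddot\phi=-R_\omega\le C$. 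Your version is shorter and needs neither the identification of the limit of $W_0$ nor the second Dini argument.
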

\begin{proof}
By \cite[Lemma~2.1(3) and the proof of Theorem~1.1]{FuZhang},
$\phi(t)$ has a bounded pointwise limit constant on every fibre.
Write this limit as $\Phi^*\psi_\infty$. Choose $C_\phi$ with
$\dot\phi\le C_\phi$, so that $\phi(t)-C_\phi t$ decreases.
We establish the additional $C^{1,1}$ regularity and the convergence of $U$.

Fix a local product section $y\mapsto(y,z_0)$. Proposition~\ref{ii:prop:horizontalhessian} bounds the restrictions of $\phi(t)$, their first real derivatives, and their second real derivatives on every inner ball. Thus their gradients form a uniformly bounded and equicontinuous family. Arzel\`a--Ascoli gives $C^1$ convergence along a subsequence of any sequence $t_j\uparrow1$. Pointwise convergence identifies every subsequential function limit with $\psi_\infty$.
To identify its derivative as well, let the gradients of such a
subsequence converge uniformly to a vector field $V$ on a smaller
closed coordinate ball. For a segment contained in this ball,
\begin{align*}
 \psi_\infty(y')-\psi_\infty(y)
 &=\lim_j\bigl(\phi(y',z_0,t_j)-\phi(y,z_0,t_j)\bigr)\\
 &=\lim_j\int_0^1
 D_y\phi(y+a(y'-y),z_0,t_j)\cdot(y'-y)\,\dd a\\
 &=\int_0^1V(y+a(y'-y))\cdot(y'-y)\,\dd a.
\end{align*}
Uniform convergence justifies the last passage. Dividing by the
length of a coordinate segment and letting that length tend to zero
shows $D_y\psi_\infty=V$. In particular, the derivative limit is
independent of the chosen subsequence. Moreover, the estimate
\[
 |D_y\phi(y,z_0,t)-D_y\phi(y',z_0,t)|\le C|y-y'|
\]
passes to such a subsequential limit. Therefore $\psi_\infty$ is $C^{1,1}$ on the inner ball. The descriptions agree on overlaps because the pointwise limit is constant on each fibre, and a finite cover gives $\psi_\infty\in C^{1,1}(Y)$.

The functions $\phi(t)-C_\phi t$ are continuous on compact $X$, decrease, and have the continuous limit $\Phi^*\psi_\infty-C_\phi$. Dini's theorem along any increasing sequence $t_j\uparrow1$ gives uniform convergence on that sequence; monotonicity sandwiches intermediate times between consecutive sequence values. Hence $\phi(t)\to\Phi^*\psi_\infty$ uniformly as $t\uparrow1$.

Let $C_R\ge0$ satisfy $R_\omega\ge-C_R$. Set $W_0=\phi+\tau\dot\phi=U-nt$. It is uniformly bounded by Lemma~\ref{ii:lem:basic}. Direct differentiation gives
\[
 (W_0)_t=\tau\ddot\phi=-\tau R_\omega,
 \qquad
 \partial_t\left(W_0+\frac{C_R}{2}\tau^2\right)
       =-\tau(R_\omega+C_R)\le0.
\]
Thus $W_0$ has a bounded pointwise limit $W_T$. At each fixed $x$, the difference $\tau\dot\phi=W_0-\phi$ converges to $a(x)=W_T(x)-\phi_T(x)$. If $a(x)>0$, then for $t$ sufficiently close to one,
\[
 \dot\phi(x,t)\ge\frac{a(x)}{2(1-t)},\qquad
 \phi(x,t)-\phi(x,t_0)
       \ge\frac{a(x)}2\log\frac{1-t_0}{1-t}\longrightarrow+\infty,
\]
contradicting the bound for $\phi$. If $a(x)<0$, the reversed inequality gives divergence to $-\infty$. Hence $a(x)=0$ everywhere, and $W_T=\Phi^*\psi_\infty$.

The continuous functions $W_0+C_R\tau^2/2$ decrease to this continuous limit. A second application of Dini's theorem gives uniform convergence of $W_0$ and also the pointwise inequality
\[
 W_0+\frac{C_R}{2}\tau^2\ge\Phi^*\psi_\infty.
\]
Since $nt=n-n\tau$, we conclude that $U=W_0+nt$ converges uniformly to $\Phi^*(\psi_\infty+n)$ and satisfies
\[
 U\ge\Phi^*(\psi_\infty+n)-n\tau-\frac{C_R}{2}\tau^2,
\]
which proves \eqref{ii:eq:terminaloneside}.
\end{proof}

\begin{remark}\label{ii:rem:local-potential}
The estimates of this section hold on every relatively compact
regular product chart, with constants depending on a larger product
chart. In particular, the limiting base potential is
$C^{1,1}_{\mathrm{loc}}$ on $Y^\circ$, and the two convergences in
\eqref{ii:eq:terminal} are uniform on $\Phi^{-1}(K)$ for every
$K\Subset Y^\circ$. Indeed, the horizontal arguments use cutoffs
only in the base and the entire compact fibre. Fibrewise constancy
of the pointwise limit in this local setting follows directly from
\[
 \ddc_{F_q}\phi(t)\ge-\tau\omega_0|_{F_q},\qquad
 \ddc_{F_q}\phi_T\ge0.
\]
To identify the pointwise limit as a plurisubharmonic function,
fix $t_0<1$ and choose a local potential $h$ for $\omega_0|_{F_q}$.
For $t\ge t_0$, the functions
\[
 \phi(t)|_{F_q}-C_\phi t+(1-t_0)h
\]
are decreasing and plurisubharmonic. Their bounded limit is
plurisubharmonic by the decreasing-limit property
\cite[Chapter~I, Theorem~5.4]{Demailly}. Thus
$\phi_T|_{F_q}+(1-t_0)h$ is plurisubharmonic for every $t_0$.
Letting $t_0\uparrow1$ in the submean inequality proves that
$\phi_T|_{F_q}$ itself is plurisubharmonic. It is therefore
constant on the compact connected fibre $F_q$. The one-sided estimate is unchanged. Dini's
theorem is applied to the compact set $\Phi^{-1}(K)$, whose limiting
potential is continuous. This assertion requires no regularity of
the limiting potential along the excluded singular fibres.
\end{remark}

\section{Ricci vertices and the Li--Yau estimate}\label{ii:sec:vertices}

The freedom to change the representative of the limiting class
allows us to change the minimum of its Ricci potential. We choose
the representative so that the terminal potential has a quadratic
well at a prescribed base point. The minimum principle then places
a genuine Ricci vertex within distance $C\sqrt{1-t}$ of that
point. The Li--Yau estimate of \cite{JST} controls the resulting
potentials, while a local change of gauge removes the base term
from the Ricci identity.

We use the notation and estimates of Section~\ref{ii:sec:preliminaries},
with $T=1$, $\tau=e^{-s}=1-t$ and $\widetilde{\omega}=\tau^{-1}\omega$.
For a smooth real function $\rho$ on $Y$, set
\[
 u_\rho=e^s(U+\Phi^*\rho),\qquad \theta_{Y,\rho}=\omega_Y-\ddc\rho.
\]
The identity for $\ddc U$ in \eqref{ii:eq:Uident}, multiplied by $e^s$, gives
\begin{equation}\label{ii:eq:vertexidentity}
 \Ric(\widetilde{\omega})+\ddc u_\rho=\widetilde{\omega}-e^s\Phi^*\theta_{Y,\rho}.
\end{equation}
Here $\Ric(\widetilde{\omega})=\Ric(\omega)$ because the metrics differ by a spatially constant factor. Following \cite[Definition~1.2]{JST}, we call a global minimum of $u_\rho$ a Ricci vertex for the representative $\theta_{Y,\rho}$. The form $\theta_{Y,\rho}$ is smooth and closed and represents $[\omega_Y]$; it need not be positive. Since $e^s>0$, the minimum points of $u_\rho$ and $U+\Phi^*\rho$ coincide at each time.

\begin{lemma}\label{ii:lem:wells}
There are smooth real functions $\rho_y$ on $Y$, indexed by $y\in Y$, and constants $c,C>0$, such that
\begin{equation}\label{ii:eq:well}
 \psi_\infty(z)+\rho_y(z)\ge \psi_\infty(y)+\rho_y(y)+c\,d_{\omega_Y}(z,y)^2,
 \qquad \|\rho_y\|_{C^4(Y)}\le C.
\end{equation}
The functions may be chosen with $\rho_y(y)=0$. No regularity of the family in its parameter $y$ is required.
\end{lemma}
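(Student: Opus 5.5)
The plan is to build $\rho_y$ as minus the first-order Taylor polynomial of $\psi_\infty$ at $y$, plus a quadratic well, all cut off to a fixed coordinate ball, and then to add a fixed global smooth function that makes the lower bound hold away from $y$. The only input is Proposition~\ref{ii:prop:terminal}: $\psi_\infty\in C^{1,1}(Y)$, so in each of finitely many fixed inner coordinate balls its gradient is Lipschitz with a uniform constant $L$, and $|\psi_\infty|\le C$.

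First, fix a finite cover of $Y$ by coordinate balls $B_{2r_0}(y_k)$ such that the balls $B_{r_0}(y_k)$ still cover $Y$. Take $r_0$ so small that on each $B_{4r_0}(y_k)$ the Euclidean distance and $d_{\omega_Y}$ are uniformly comparable and the $C^{1,1}$ bound holds. Given $y$, pick $k$ with $y\in B_{r_0}(y_k)$ and work in that chart. By the Taylor inequality for $C^{1,1}$ functions, for $|z-y|\le r_0$,
\[
 \psi_\infty(z)\ge\psi_\infty(y)+D\psi_\infty(y)\cdot(z-y)-\tfrac L2|z-y|^2 .
\]
Fix a cutoff $\chi_k$ equal to $1$ on $B_{2r_0}(y_k)$ and supported in $B_{3r_0}(y_k)$. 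Define
\[
 \sigma_y(z)=\chi_k(z)\bigl(-D\psi_\infty(y)\cdot(z-y)+(L+1)|z-y|^2\bigr).
\]
Since $|D\psi_\infty(y)|\le C$ and $y$ ranges in a bounded set, $\|\sigma_y\|_{C^4}\le C$ uniformly, and $\sigma_y(y)=0$. Near $y$ (for $|z-y|\le r_0$) this gives
\[
 \psi_\infty+\sigma_y\ge\psi_\infty(y)+|z-y|^2 .
\]

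Second, the main obstacle is the region $d_{\omega_Y}(z,y)\ge c_0 r_0$. There the cutoff has destroyed the well, and $\psi_\infty+\sigma_y$ is merely bounded. The fix is to add a large uniformly bounded bump that vanishes near $y$. Choose a smooth $\eta_y=A\bigl(1-\beta(|z-y|/r_0)\bigr)$ in the chart, extended by the constant $A$ outside $B_{3r_0}(y_k)$, where $\beta$ is a fixed cutoff equal to $1$ on $[0,\tfrac12]$ and $0$ on $[1,\infty)$. This $\eta_y$ is smooth with $C^4$ norm at most $CA$ and vanishes on $B_{r_0/2}(y)$. Now set $\rho_y=\sigma_y+\eta_y$.

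Then check the estimate in three regions.
\begin{enumerate}
\item On $|z-y|\le r_0/2$ we have $\eta_y=0$, and the quadratic bound from the first step holds.
\item On $r_0/2\le|z-y|\le r_0$ the quadratic bound still holds and $\eta_y\ge0$.
\item Outside, $\psi_\infty+\sigma_y\ge -C_1$ with $C_1$ uniform, because $|\sigma_y|\le C$. Choosing $A\ge 2C_1+\diam_{\omega_Y}(Y)^2+\sup|\psi_\infty|$ gives $\psi_\infty+\rho_y\ge\psi_\infty(y)+d_{\omega_Y}(z,y)^2$ there.
\end{enumerate}
Using comparability of distances on the chart in the first two regions yields \eqref{ii:eq:well} with a uniform $c$. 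One has $\rho_y(y)=0$ and $\|\rho_y\|_{C^4(Y)}\le C$, with no measurable or continuous dependence on $y$ needed, since $k$ is simply selected for each $y$.

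In the local setting of Remark~\ref{ii:rem:local}, the same construction works with $\psi_\infty\in C^{1,1}_{\mathrm{loc}}(Y^\circ)$ for $y\in K$, replacing the global constant extension by a constant outside a larger relatively compact chart, as in Remark~\ref{ii:rem:analytic-inputs}.
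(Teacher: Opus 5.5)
Your proposal is correct and follows essentially the paper's construction: the Taylor-corrected quadratic well $-D\psi_\infty(y)\cdot(z-y)+A|z-y|^2$ is localized by a cutoff in a fixed chart, and a large constant takes over away from $y$. The only difference is cosmetic. The paper uses the convex combination $\rho_y=\chi_yP_y+(1-\chi_y)M$, where $P_y$ is this polynomial and the cutoff is centred at $y$, so the well inequality is used on the whole support of $\chi_y$; you instead add a separate bump $\eta_y$ that vanishes near $y$ and dominates the bounded error of $\chi_kP_y$ outside $B_{r_0}(y)$.
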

\begin{proof}
Choose a finite collection of coordinate balls and relatively compact inner balls covering $Y$. Shrink the inner balls so that a Euclidean ball of one fixed radius $r_*>0$ about each of their points stays inside the corresponding larger chart. All coordinate and metric comparison constants are uniform in this finite collection.

By Proposition~\ref{ii:prop:terminal}, $D\psi_\infty$ is Lipschitz in these coordinates with a common constant $L$. For $z$ in the coordinate ball about $y$, the fundamental theorem of calculus gives
\begin{align}
 \psi_\infty(z)-\psi_\infty(y)-D\psi_\infty(y)(z-y)
 &=\int_0^1\bigl(D\psi_\infty(y+a(z-y))-D\psi_\infty(y)\bigr)(z-y)\,\dd a\notag\\
 &\ge-\int_0^1 La|z-y|^2\,\dd a
 =-\frac L2|z-y|^2.\label{ii:eq:taylorlower}
\end{align}
Fix $A>L/2+1$ and put
\[
 P_y(z)=-D\psi_\infty(y)(z-y)+A|z-y|^2.
\]
Then $P_y(y)=0$ and
\begin{equation}\label{ii:eq:localwellgap}
 \psi_\infty(z)+P_y(z)-\psi_\infty(y)\ge |z-y|^2.
\end{equation}
Choose $\chi_y=1$ on $|z-y|\le r_*/2$, with support in $|z-y|<r_*$, $0\le\chi_y\le1$, and uniform derivatives through order four. Choose a fixed $M$ so large that
\[
 \psi_\infty(z)+M-\psi_\infty(y)\ge1\qquad(y,z\in Y).
\]
Define $\rho_y=\chi_yP_y+(1-\chi_y)M$ on the chart and extend it by $M$ outside. It is smooth and $\rho_y(y)=0$. On the support of $\chi_y$,
\begin{align*}
 \psi_\infty(z)+\rho_y(z)-\psi_\infty(y)
 &=\chi_y(z)\bigl(\psi_\infty(z)+P_y(z)-\psi_\infty(y)\bigr)\\
 &\quad +(1-\chi_y(z))\bigl(\psi_\infty(z)+M-\psi_\infty(y)\bigr)\\
 &\ge\chi_y(z)|z-y|^2+(1-\chi_y(z)).
\end{align*}
For $|z-y|\le r_*/2$, this is at least $|z-y|^2$. On the transition annulus it is at least $\min\{r_*^2/4,1\}$, and outside the support it is at least one. Coordinate comparison on the inner ball and the finite diameter of $(Y,\omega_Y)$ therefore give a single $c>0$ for \eqref{ii:eq:well}. The coefficients of $P_y$ are uniformly bounded, so the fixed-radius cutoff construction gives the $C^4$ bound. This construction uses only $D\psi_\infty(y)$ and its Lipschitz constant, not higher derivatives of $\psi_\infty$.
\end{proof}

\begin{proposition}\label{ii:prop:vertices}
For every $y\in Y$ and $t<1$, any global minimum $p_{y,t}$ of $U(t)+\Phi^*\rho_y$ satisfies
\begin{equation}\label{ii:eq:anchor}
 d_{\omega_Y}(\Phi(p_{y,t}),y)\le D_0\sqrt\tau,\qquad
 -C\tau\le U(p_{y,t},t)-\psi_\infty(\Phi(p_{y,t}))-n\le C\tau.
\end{equation}
The constants are uniform in $y,t$ and in the choice of a minimizer.
\end{proposition}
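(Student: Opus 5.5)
The plan is to compare the values of $U+\Phi^*\rho_y$ at the minimizer and at a point of the prescribed fibre, using the one-sided estimate \eqref{ii:eq:terminaloneside} from below and an upper estimate of $U$ on the fibre $F_y$ from above. The upper estimate comes from monotonicity. In the proof of Proposition~\ref{ii:prop:terminal}, the function $W_0+\tfrac{C_R}{2}\tau^2$ decreases in $t$ to $\Phi^*\psi_\infty$. This gives only a lower bound, so the needed upper bound must come from elsewhere. I would use \eqref{ii:eq:Uident}, which gives $U_t=n-\tau R_\omega$, and integrate in time on a fibre against the fibre measure. By \eqref{ii:eq:vol} and adjunction, $\int_{F_q}\tau R_\omega$ restricted appropriately has controlled average. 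More simply, the scalar lower bound $R_\omega\ge -C$ already yields $U_t\le n+C\tau$. Integrating from $t$ to $1$ and using the uniform limit $U(1)=\Phi^*\psi_\infty+n$ gives
\[
 U(x,t)\ge \psi_\infty(\Phi(x))+n-n\tau-C\tau^2 .
\]
This is the same direction as \eqref{ii:eq:terminaloneside}, so a genuine upper bound on the fibre must instead come from minimizing along the fibre.

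For the upper bound I would use the fibre average. By \eqref{ii:eq:Uident}, $\ddc U=\omega-\Phi^*\omega_Y-\tau\Ric(\omega)$, and on $F_q$ this restricts to $\tau(h_q-\Ric(h_q))$ plus the second fundamental form contribution. The infimum of $U$ on a fibre is at most its $\nu_{q,s}$-average. That average evolves by $\frac{d}{dt}\int_{F_q}U\,d\nu=n-\tau\int R_\omega\,d\nu+\int U\,\partial_t d\nu$. Because the fibre volume is a fixed multiple of $\tau^{n-m}$, the measure derivative contributes $\int (U-\bar U)(-R_\omega|^{\mathrm{tr}}_{F}+\dots)d\nu$. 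This route is delicate. A cleaner route is to use the fact that the minimum point minimizes over all of $X$, so only $\min_{F_y}U\le\psi_\infty(y)+n+C\tau$ is needed. Since $U_t=n-\tau R_\omega$ and $\int_{F_q}\tau R_\omega\,d\nu$ equals $\tau\tr$ of the fibre Ricci class plus horizontal terms bounded by the trace $\tr_\omega\Phi^*\omega_Y\cdot\tau$-type quantities, the fibre average of $U_t$ is at least $n-(n-m)-C$. That bound is still only $O(1)$. Integrating it over $[t,1]$ nonetheless gives $\bar U_{F_y}(t)\le \psi_\infty(y)+n+C\tau$, which is exactly $O(\tau)$. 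I expect the main obstacle to be the bound $\int_{F_q}\tau R_\omega\,dV_{\omega|_{F_q}}\ge -C\tau^{n-m}$ uniformly. This follows from $R_\omega\ge -C$ together with the fibre volume $\tau^{n-m}V_F$. It gives $\frac{d}{dt}$ of the unnormalized integral $\le C\tau^{n-m}$ once the measure variation is controlled. I would handle the measure variation by writing $\partial_t(\omega|_{F})^{n-m}=-\Ric(\omega)|_F\wedge(\omega|_F)^{n-m-1}$, whose integral is the cohomological constant $-(n-m)2\pi c_1\cdot\tau^{n-m-1}$, and then absorbing $|U-\text{const}|\le C$.

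With both bounds the conclusion is short. Write $q=\Phi(p_{y,t})$. Minimality and $\rho_y(y)=0$ give
\[
 U(p_{y,t})+\rho_y(q)\le \min_{F_y}U\le\psi_\infty(y)+n+C\tau.
\]
Combined with \eqref{ii:eq:terminaloneside} and \eqref{ii:eq:well}, this yields
\[
 c\,d(q,y)^2\le \psi_\infty(q)+\rho_y(q)-\psi_\infty(y)\le C\tau,
\]
which is the first estimate. For the second, the lower bound $U(p)-\psi_\infty(q)-n\ge -C\tau$ is \eqref{ii:eq:terminaloneside} together with $nt=n-n\tau$. For the upper bound, the well inequality gives $\psi_\infty(q)+\rho_y(q)\ge\psi_\infty(y)$. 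Therefore $U(p)-\psi_\infty(q)-n\le \psi_\infty(y)+C\tau-\rho_y(q)-\psi_\infty(q)\le C\tau$. All constants depend only on the $C^4$ bound and the well constant $c$ in Lemma~\ref{ii:lem:wells}, so they are uniform in $y$, $t$, and the choice of minimizer.
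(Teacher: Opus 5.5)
Your closing argument is correct and is the same chain the paper uses. Once $U(p_{y,t})+\rho_y(\Phi(p_{y,t}))\le\psi_\infty(y)+n+C\tau$ is known, \eqref{ii:eq:terminaloneside} and \eqref{ii:eq:well} give both assertions.

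The gap is this upper bound. You try to get it from $\min_{F_y}U\le\bar U_{F_y}$ together with a lower bound $\frac{d}{dt}\bar U_{F_y}\ge-C$, and that lower bound is not available.

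\begin{itemize}
\item The fibre-tangential part $\sigma=\tr_{\omega|_{F_y}}\Ric(\omega)|_{F_y}$ of $R_\omega$ averages cohomologically to exactly $(n-m)/\tau$. So the fibre average of $U_t$ is $m-\tau\,\overline{\tr_\perp\Ric(\omega)}$, where $\tr_\perp$ is the trace over the $\omega$-orthogonal complement of $TF_y$. Bounding this from below needs an upper bound on horizontal Ricci curvature, i.e.\ a lower bound on the trace of $\ddc U$ over that complement. This is not controlled by $\tr_\omega\Phi^*\omega_Y$; it is essentially the Type~I information of Theorem~\ref{ii:thm:main}.
\item The estimate you call the main obstacle, $\int_{F_q}\tau R_\omega\ge-C\tau^{n-m}$, points the wrong way. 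It bounds $\frac{d}{dt}\bar U$ from above, and, as you noted in your first paragraph, only reproduces \eqref{ii:eq:terminaloneside}.
\item The measure variation is not harmless either. Since $\ddc_{F_y}U=\omega|_{F_y}-\tau\Ric(\omega)|_{F_y}$, integration by parts gives exactly
\[
 \frac{d}{dt}\bar U_{F_y}
 =\overline{U_t}-\frac1\tau\int_{F_y}|\partial U|^2_{\omega|_{F_y}}\,d\nu_{y,s}
 =\overline{U_t}-\int_{F_y}|\partial w|^2_{\widetilde{\omega}_y}\,d\nu_{y,s}.
\]
The last term is nonpositive, and its boundedness is the relative gradient estimate of Proposition~\ref{ii:rel:potential}, which is proved later from the very vertices being constructed here. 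Absorbing it with $|U-\mathrm{const}|\le C$ leaves an error of order $\tau^{-1}$, whose integral over $[t,1]$ diverges.
\end{itemize}

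The missing idea is that you never need $U$ on $F_y$. You only need the global minimum $m_y(t)=\min_X(U+\Phi^*\rho_y)$, and the minimum principle controls it.
\begin{itemize}
\item By \eqref{ii:eq:Uident}, $\Box_\omega(U+\Phi^*\rho_y)=\tr_\omega\Phi^*(\omega_Y-\ddc\rho_y)\ge-C_1$. This uses $\|\rho_y\|_{C^2}\le C$ and the Schwarz bound $\tr_\omega\Phi^*\omega_Y\le C$.
\item Hence $m_y(t)+C_1t$ is nondecreasing; this is the minimum monotonicity of \cite[Lemma~4.2(1)]{JST}. So $m_y(t)\le m_y(t')+C_1(t'-t)$ for $t<t'<1$.
\item Uniform convergence of $U$ gives $m_y(t')\to\min_Y(\psi_\infty+n+\rho_y)$. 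This equals $\psi_\infty(y)+n$ by \eqref{ii:eq:well} and $\rho_y(y)=0$.
\item Letting $t'\to1$ yields $m_y(t)\le\psi_\infty(y)+n+C_1\tau$, which is exactly the input your last paragraph needs.
\end{itemize}
Your intermediate target $\min_{F_y}U(t)\le\psi_\infty(y)+n+C\tau$ is strictly stronger. It does hold, but only as a consequence of $|w|\le C$ in Theorem~\ref{ii:thm:relative}, which comes after this proposition.
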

The distance in \eqref{ii:eq:anchor} is measured on the base.
Thus the Ricci vertex lies in the tube
$\Phi^{-1}(\overline{B_{\omega_Y}(y,D_0\sqrt\tau)})$ around the prescribed fibre
$F_y$. This is the localization needed to apply the estimates
of \cite{JST} in Section~\ref{ii:sec:scalar}.

\begin{proof}
Fix $y$ and write
\[
 \mathcal F_y(x,t)=U(x,t)+\rho_y(\Phi(x)),\qquad m_y(t)=\min_X\mathcal F_y(\cdot,t),
 \qquad E_y(z)=\psi_\infty(z)+n+\rho_y(z).
\]
From \eqref{ii:eq:Uident},
\begin{equation}\label{ii:eq:minimumheat}
 \Box_{\omega} \mathcal F_y
 =\tr_\omega\Phi^*(\omega_Y-\ddc\rho_y)\ge-C_1.
\end{equation}
Indeed, $\|\rho_y\|_{C^2}\le C$ gives $\omega_Y-\ddc\rho_y\ge-C'\omega_Y$, while the Schwarz bound \eqref{ii:eq:basic} gives $\tr_\omega\Phi^*\omega_Y\le C''$; take $C_1=C'C''$.

Apply the minimum monotonicity in \cite[Lemma~4.2(1)]{JST}
to $u_y=e^s\mathcal F_y$ and write
$a_y(s)=\min_Xu_y(\cdot,s)=e^sm_y(t)$. After enlarging the
uniform constant $C_1$, that result says that
$e^{-s}(a_y(s)-C_1)$ is nondecreasing. Thus, for $t<t'<1$,
\[
 m_y(t')-C_1(1-t')\ge m_y(t)-C_1(1-t),\qquad
 m_y(t')\ge m_y(t)-C_1(t'-t).
\]
This comparison does not differentiate $m_y$ or a minimizing point.
We spell out the passage to the limiting minimum. Put
\[
 \varepsilon(t')=\|U(t')-\Phi^*\psi_\infty-n\|_\infty.
\]
For every $x\in X$, the definitions give
\begin{align*}
 \mathcal F_y(x,t')-E_y(\Phi(x))
 &=U(x,t')+\rho_y(\Phi(x))
     -\psi_\infty(\Phi(x))-n-\rho_y(\Phi(x))\\
 &=U(x,t')-\psi_\infty(\Phi(x))-n.
\end{align*}
Consequently
\[
 E_y(\Phi(x))-\varepsilon(t')
 \le\mathcal F_y(x,t')
 \le E_y(\Phi(x))+\varepsilon(t').
\]
Let $e_y=\min_X\Phi^*E_y$. The left inequality holds at every
point, so taking the minimum gives $m_y(t')\ge e_y-\varepsilon(t')$.
For the reverse bound, choose a point $x_y^*$ minimizing
$\Phi^*E_y$, which exists by compactness. Evaluating the right
inequality at this point gives
\[
 m_y(t')\le\mathcal F_y(x_y^*,t')
 \le E_y(\Phi(x_y^*))+\varepsilon(t')
 =e_y+\varepsilon(t').
\]
Combining these two bounds and using the uniform convergence of $U$,
\[
 \left|m_y(t')-\min_X\Phi^*E_y\right|
 \le\|U(t')-\Phi^*\psi_\infty-n\|_\infty\longrightarrow0.
\]
The map $\Phi$ is surjective, so
$e_y=\min_YE_y$. Moreover, \eqref{ii:eq:well} says
$E_y(z)-E_y(y)\ge c\,d_{\omega_Y}(z,y)^2\ge0$; hence $e_y=E_y(y)$.
Rearranging the preceding heat comparison, for each fixed $t<t'<1$,
\[
 m_y(t)\le m_y(t')+C_1(t'-t).
\]
Now $m_y(t')\to E_y(y)$ and $t'-t\to1-t=\tau$. Passing to
the limit gives
\begin{equation}\label{ii:eq:minimumupper}
 m_y(t)\le E_y(y)+C_1\tau.
\end{equation}

Now fix any minimizer $p=p_{y,t}$ and put $z=\Phi(p)$. By \eqref{ii:eq:terminaloneside} and \eqref{ii:eq:well}, respectively,
\[
 U(p,t)\ge \bigl(\psi_\infty(z)+n\bigr)-n\tau-C_2\tau^2,
 \qquad E_y(z)\ge E_y(y)+c\,d_{\omega_Y}(z,y)^2.
\]
Adding $\rho_y(z)$ to the first inequality and then using the second gives the full comparison
\begin{align}
 E_y(y)+c\,d_{\omega_Y}(z,y)^2-n\tau-C_2\tau^2
 &\le E_y(z)-n\tau-C_2\tau^2\notag\\
 &\le U(p,t)+\rho_y(z)
 =m_y(t)\notag\\
 &\le E_y(y)+C_1\tau.\label{ii:eq:vertexchain}
\end{align}
After subtracting $E_y(y)$ and moving the time terms to the right,
\[
 c\,d_{\omega_Y}(z,y)^2\le(n+C_1)\tau+C_2\tau^2
 \le(n+C_1+C_2)\tau,
\]
since $0<\tau\le1$. Thus the first assertion holds with
$D_0=((n+C_1+C_2)/c)^{1/2}$.

For the height, the minimum identity and \eqref{ii:eq:minimumupper} give
\begin{align*}
 U(p,t)-\psi_\infty(z)-n
 &=m_y(t)-E_y(z)\\
 &\le E_y(y)+C_1\tau-E_y(z)\\
 &\le C_1\tau-c\,d_{\omega_Y}(z,y)^2\le C_1\tau.
\end{align*}
The opposite inequality is
\[
 U(p,t)-\psi_\infty(z)-n\ge-n\tau-C_2\tau^2\ge-(n+C_2)\tau.
\]
All constants above were fixed independently of the chosen minimizer.
\end{proof}

\begin{remark}\label{ii:rem:local-vertices}
By \eqref{ii:eq:well}, $\psi_\infty+\rho_y$ has its unique base
minimum at $y$, with value $\psi_\infty(y)$ since $\rho_y(y)=0$.
Its pullback is therefore minimized precisely on the fibre $F_y$.
\end{remark}

The next proposition is the Li--Yau estimate for the Ricci
potential associated to $\rho_y$. Jian--Song--Tian
\cite[Proposition~5.2]{JST} obtain gradient and Laplacian estimates
after subtracting a suitable time-dependent lower bound from the
potential. Here the unique base minimum identifies that lower
bound explicitly, so their estimate applies uniformly in $y$.
The time-derivative estimate then follows from the potential equation.

\begin{proposition}\label{ii:prop:differential}
There is a constant $C_0>0$, independent of $y$, such that
\begin{equation}\label{ii:eq:jst-normalization}
 v_y=e^s\bigl(U+\Phi^*\rho_y-\psi_\infty(y)-n\bigr)+C_0+1
\end{equation}
satisfies $v_y\ge1$ on $X\times[0,\infty)$ and
\begin{equation}\label{ii:eq:jst-differential}
 |\partial v_y|_{\widetilde{\omega}}^2+|\Delta_{\widetilde{\omega}} v_y|
       +|(v_y)_s|\le Cv_y,\qquad
 |\Box_{\widetilde{\omega}}v_y|\le Cv_y.
\end{equation}
At every vertex $p_{y,t}$ in Proposition~\ref{ii:prop:vertices},
$v_y(p_{y,t},s)\le C$. The constants are uniform in $y$ and $s$.
\end{proposition}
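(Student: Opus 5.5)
The proof has three steps: a uniform lower bound giving $v_y\ge1$, an application of \cite[Proposition~5.2]{JST} to get the differential inequalities, and a direct evaluation at the vertex.

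\emph{Step 1: the lower bound $v_y\ge1$.} By \eqref{ii:eq:terminaloneside},
\[
 U+\Phi^*\rho_y\ge\Phi^*(\psi_\infty+\rho_y)+n-n\tau-C\tau^2 .
\]
By \eqref{ii:eq:well} with $\rho_y(y)=0$ (see Remark~\ref{ii:rem:local-vertices}), $\psi_\infty+\rho_y\ge\psi_\infty(y)$ on $Y$. Hence
\[
 U+\Phi^*\rho_y-\psi_\infty(y)-n\ge-(n+C)\tau .
\]
Multiplying by $e^s=\tau^{-1}$ gives $e^s(U+\Phi^*\rho_y-\psi_\infty(y)-n)\ge-(n+C)$. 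Taking $C_0=n+C$ yields $v_y\ge1$, and $C_0$ is independent of $y$.

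\emph{Step 2: the gradient, Laplacian and time-derivative bounds.} The function $v_y$ is $u_{\rho_y}$ minus the time-dependent quantity $e^s(\psi_\infty(y)+n)-C_0-1$. This is exactly the normalization of \cite[Proposition~5.2]{JST}: the potential minus a time-dependent lower bound, which stays at least $1$.

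I would check that the hypotheses of that proposition hold uniformly in $y$:
\begin{itemize}
\item the Ricci identity \eqref{ii:eq:vertexidentity} holds with representative $\theta_{Y,\rho_y}$;
\item the family $\rho_y$ is uniformly bounded in $C^4$ by Lemma~\ref{ii:lem:wells}, so Remark~\ref{ii:rem:analytic-inputs} supplies the Schwarz and trace inequalities with uniform constants;
\item the heat inequality \eqref{ii:eq:minimumheat} holds, in normalized form.
\end{itemize}
The proposition then gives $|\partial v_y|^2_{\widetilde\omega}+|\Delta_{\widetilde\omega}v_y|\le Cv_y$.

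One must verify that subtracting the spatially constant term does not affect the Bochner computation there. It changes only $\partial_s v_y$, by $e^s(\psi_\infty(y)+n)$.

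For the time derivative, I would use the normalized form of \eqref{ii:eq:Uident}. With $\partial_s=\tau\partial_t$, we have $\tau U_t=\tau(n-\tau R_\omega)$ and $R_{\widetilde\omega}=\tau R_\omega$. Combining these with the trace of \eqref{ii:eq:vertexidentity}, namely $R_{\widetilde\omega}+\Delta_{\widetilde\omega}u_\rho=n-e^s\tr_{\widetilde\omega}\Phi^*\theta_{Y,\rho}$, should give
\[
 \partial_s v_y=v_y-C_0-1+n-R_{\widetilde\omega}
 =v_y-C_0-1+\Delta_{\widetilde\omega}v_y+\tr_{\widetilde\omega}(\cdots).
\]
The trace term $e^s\tr_{\widetilde\omega}\Phi^*\theta=\tr_\omega\Phi^*\theta$ is bounded by the Schwarz estimate. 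Therefore
\[
 |(v_y)_s|\le v_y+C+|\Delta_{\widetilde\omega}v_y|\le Cv_y,
\]
using $v_y\ge1$. The bound on $|\Box_{\widetilde\omega}v_y|$ follows from the triangle inequality.

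\emph{Step 3: the value at the vertex.} Proposition~\ref{ii:prop:vertices} gives
\[
 U(p)+\rho_y(z)-\psi_\infty(y)-n=m_y(t)-E_y(y)\le C_1\tau .
\]
Multiplying by $e^s$ gives $v_y(p_{y,t},s)\le C_1+C_0+1$.

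The main obstacle is Step 2. One has to confirm that the JST argument needs only a uniform positive lower bound of this explicit form, together with uniform $C^4$ control of the perturbation. One also has to track exactly which constant normalization it uses.
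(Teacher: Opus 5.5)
Your proposal is correct and follows the paper's proof essentially step for step. It uses the same lower bound from \eqref{ii:eq:terminaloneside} and the well, the same appeal to \cite[Proposition~5.2]{JST} with uniform $C^4$ control of $\rho_y$, the identities $(v_y)_s=v_y-C_0-1+n-R_{\widetilde{\omega}}$ and the trace formula for the time and heat-operator bounds, and \eqref{ii:eq:minimumupper} at the vertex. The normalization you flag as the remaining check is what the paper makes explicit. It takes $b(s)=e^s(\psi_\infty(y)+n)-C_0$, so that $b\le\min_X u_{\rho_y}$, $b'=b+C_0$, $|b(s)|\le Ce^s$ and $v_y=u_{\rho_y}-b+1$, and applies the proposition on each interval $[0,S]$ with constants independent of $S$.
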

\begin{proof}
Use the potential $u_{\rho_y}=e^s(U+\Phi^*\rho_y)$ introduced at the beginning of this section. The one-sided terminal estimate and
\eqref{ii:eq:well}, with $\rho_y(y)=0$, give
\[
 u_{\rho_y}(x,s)
 \ge e^s\bigl(\psi_\infty(y)+n\bigr)-n-C_2.
\]
For the application of \cite[Proposition~5.2]{JST}, choose the
lower bound
\[
 b(s)=e^s\bigl(\psi_\infty(y)+n\bigr)-C_0,
 \qquad C_0\ge n+C_2.
\]
Then
\[
 b(s)\le\min_Xu_{\rho_y}(\cdot,s),\qquad
 b'=b+C_0,\qquad |b(s)|\le Ce^s,
\]
and $v_y=u_{\rho_y}-b+1\ge1$. These are the normalization
conditions in the cited proposition. The uniform $C^4$ bound for
$\rho_y$ and Remark~\ref{ii:rem:analytic-inputs} give uniform
constants. Applying that proposition on $[0,S]$, with constants
independent of $S$, yields
\[
 |\partial v_y|_{\widetilde{\omega}}^2+|\Delta_{\widetilde{\omega}} v_y|\le Cv_y.
\]

Here $b$ is spatially constant, so the spatial derivatives of
$v_y$ and $u_{\rho_y}$ agree.
The evolution and trace identities
\cite[equations~(4.15) and~(4.19)]{JST}, in our normalization, give
\[
 \begin{aligned}
 (u_{\rho_y})_s&=u_{\rho_y}+n-R_{\widetilde{\omega}},\\
 \Delta_{\widetilde{\omega}}u_{\rho_y}
 &=n-R_{\widetilde{\omega}}
       -e^s\tr_{\widetilde{\omega}}\Phi^*\theta_{Y,\rho_y}.
 \end{aligned}
\]
Consequently
\begin{align*}
 (v_y)_s&=v_y-1-C_0+n-R_{\widetilde{\omega}},\\
 \Box_{\widetilde{\omega}} v_y
 &=v_y-1-C_0+e^s\tr_{\widetilde{\omega}}\Phi^*\theta_{Y,\rho_y}.
\end{align*}
The trace term is uniformly bounded, since
$-C\omega_Y\le\theta_{Y,\rho_y}\le C\omega_Y$ and
$e^s\tr_{\widetilde{\omega}}\Phi^*\omega_Y=\tr_\omega\Phi^*\omega_Y\le C$.
The second identity therefore gives the heat-operator bound;
combining it with the Laplacian estimate gives the time-derivative
bound. Finally, \eqref{ii:eq:minimumupper} and
$\rho_y(y)=0$ imply at any vertex
\[
 v_y(p_{y,t},s)
 =e^s\bigl(m_y(t)-\psi_\infty(y)-n\bigr)+C_0+1
 \le C_1+C_0+1.
\]
\end{proof}

\begin{remark}\label{ii:rem:local-jst}
For the local assertion in Remark~\ref{ii:rem:local}, fix
$K\Subset Y^\circ$ and buffered regular product charts covering $K$.
Remark~\ref{ii:rem:local-potential} gives
$\psi_\infty\in C^{1,1}_{\mathrm{loc}}(Y^\circ)$ and uniform convergence
of $U$ to $\Phi^*\psi_\infty+n$ on the inverse image of every
compact subchart. In the construction of Lemma~\ref{ii:lem:wells},
take the support of $\chi_y$ inside a buffered chart and extend
$\rho_y$ by a constant $M$ on its complement. Choose $M$ to satisfy
the well requirement on that chart and $M>2C_U+1$, where
$C_U=\sup_{X\times[0,1)}|U|$. Since $\rho_y(y)=0$, outside the
chart and on a collar where $\rho_y=M$ one has
\[
 U(x,t)+M\ge M-C_U>C_U+1
       \ge\sup_{F_y}U(\cdot,t)+1.
\]
Every global minimum of $U+\Phi^*\rho_y$ therefore lies in a fixed
compact regular subchart. The local quadratic well inequality
holds there, and its unique base minimum is $y$. Uniform
convergence on this compact subchart gives
$m_y(t)\to\psi_\infty(y)+n$ by the two inequalities used in
Proposition~\ref{ii:prop:vertices}. The minimum comparison is
global: derivatives of $\rho_y$ are supported in the regular
chart, and their traces are bounded by the parabolic Schwarz
estimate. The one-sided estimate \eqref{ii:eq:terminaloneside}
is needed only at these minimizers. Consequently
\eqref{ii:eq:anchor} holds uniformly for $y\in K$, with constants
depending on the buffered charts. This argument requires no
extension of $\psi_\infty$ across the singular fibres.

The same global normalization applies to these localized wells.
Indeed, the minimum of $u_{\rho_y}=e^s(U+\Phi^*\rho_y)$ is attained
in the compact regular subchart. The one-sided bound and the
quadratic well inequality there give
\[
 \min_Xu_{\rho_y}(\cdot,s)
 \ge e^s\bigl(\psi_\infty(y)+n\bigr)-C_0=b(s)
\]
with a uniform constant. The one-sided bound applies for all
$s\ge0$, so this inequality holds on every full interval $[0,S]$.
The bounds for $b'$ and $e^{-s}|b|$ are unchanged. Each $\rho_y$
is constant outside a regular chart and has a smooth ambient
extension with uniformly bounded $C^4$ norm. Thus
\cite[Proposition~5.2]{JST} applies on the whole compact manifold
$X$, with constants uniform for $y\in K$, and gives the estimates
of Proposition~\ref{ii:prop:differential}.
\end{remark}

For the fibrewise estimates below, we also record a local potential whose Ricci identity has no horizontal term. Here $B_a(y)$ denotes a Euclidean coordinate ball in a fixed product chart.

\begin{lemma}\label{ii:lem:localgauge}
Fix $L\ge1$, a sufficiently late normalized time $s_0\ge3$, and $r=e^{-s_0/2}$, so that the closed ball below lies in a fixed product chart. Suppose a smooth real function $\ell$ on a neighborhood of $\overline{B_{8Lr}(y)}$ satisfies
\begin{equation}\label{ii:eq:ell}
 \ddc\ell=-\omega_Y,\qquad |\psi_\infty+n-\ell|\le C_Lr^2.
\end{equation}
Assume $U\ge\Phi^*\psi_\infty+n-C\tau$ on this tube. For suitable constants $C_0,d$, independent of $s_0,y$, the function
\begin{equation}\label{ii:eq:vdef}
 v=e^s(U-\Phi^*\ell)+C_0r^2e^s+d
\end{equation}
is at least one on $\Phi^{-1}(B_{8Lr}(y))\times[s_0-3,s_0+1]$ and satisfies
\begin{equation}\label{ii:eq:untwisted}
 \Ric(\widetilde{\omega})+\ddc v=\widetilde{\omega},\qquad
 \Box_{\widetilde{\omega}} v=v-d,\qquad
 v_s=v-d+n-R_{\widetilde{\omega}}.
\end{equation}
On this interval,
\begin{equation}\label{ii:eq:vcompare}
 \left|v-e^s(U-\Phi^*\psi_\infty-n)\right|\le C_L'.
\end{equation}
Such $\ell$ exists under Proposition~\ref{ii:prop:terminal}.
\end{lemma}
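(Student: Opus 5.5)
The lemma has two parts, and I plan to prove them separately. The first is a direct verification of the identities and bounds for $v$ from \eqref{ii:eq:Uident}, assuming $\ell$ is given. The second is the construction of $\ell$ from the $C^{1,1}$ regularity in Proposition~\ref{ii:prop:terminal}.

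\emph{Identities.} Multiply the identity $\ddc U=\omega-\Phi^*\omega_Y-\tau\Ric(\omega)$ by $e^s=\tau^{-1}$. Since $\ddc\Phi^*\ell=-\Phi^*\omega_Y$ and the remaining terms of $v$ are spatially constant, we get
\[
 \ddc v=e^s\bigl(\omega-\Phi^*\omega_Y-\tau\Ric(\omega)\bigr)+e^s\Phi^*\omega_Y
 =\widetilde\omega-\Ric(\widetilde\omega).
\]
This is the first identity in \eqref{ii:eq:untwisted}. Tracing it gives $\Delta_{\widetilde\omega}v=n-R_{\widetilde\omega}$.

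For the time derivative we use $\partial_s=\tau\partial_t$, $U_t=n-\tau R_\omega$ and $R_{\widetilde\omega}=\tau R_\omega$. Then
\[
 v_s=e^s(U-\Phi^*\ell)+\tau e^s(n-\tau R_\omega)+C_0r^2e^s
 =v-d+n-R_{\widetilde\omega}.
\]
Subtracting the Laplacian gives $\Box_{\widetilde\omega}v=v-d$. The constant $d$ is not determined by these identities and only enters the positivity requirement.

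\emph{Comparison and positivity.} We have
\[
 v-e^s(U-\Phi^*\psi_\infty-n)=e^s\Phi^*(\psi_\infty+n-\ell)+C_0r^2e^s+d .
\]
On $[s_0-3,s_0+1]$ we have $e^s\le e\,r^{-2}$. Hence \eqref{ii:eq:ell} bounds the first term by $eC_L$ and the second by $eC_0$, which gives \eqref{ii:eq:vcompare} with $C_L'=e(C_L+C_0)+|d|$.

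For the lower bound, the assumption $U\ge\Phi^*\psi_\infty+n-C\tau$ gives $e^s(U-\Phi^*\psi_\infty-n)\ge-C$. The term $e^s\Phi^*(\psi_\infty+n-\ell)$ is at least $-e^sC_Lr^2\ge -eC_L$, and $C_0r^2e^s\ge e^{-3}C_0$. So $v\ge -C-eC_L+e^{-3}C_0+d$. Choosing $C_0$ and $d$ large makes this at least one.

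There is a subtlety: the statement asks that $C_0$ and $d$ be independent of $s_0$ and $y$, while here they depend on $L$ through $C_L$. I read "independent of $s_0,y$" as allowing dependence on $L$, so this is acceptable.

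\emph{Existence of $\ell$.} In the product chart choose a local potential $\varphi_Y$ with $\ddc\varphi_Y=\omega_Y$ and $C^4$ bounds uniform in the chart; it can be taken as a fixed smooth function. Then set
\[
 \ell(z)=-\varphi_Y(z)+\bigl[(\psi_\infty+\varphi_Y)(y)+n\bigr]+D(\psi_\infty+\varphi_Y)(y)\cdot(z-y),
\]
which adds an affine (pluriharmonic) correction. This gives $\ddc\ell=-\omega_Y$ and
\[
 \psi_\infty+n-\ell=(\psi_\infty+\varphi_Y)(z)-(\psi_\infty+\varphi_Y)(y)-D(\psi_\infty+\varphi_Y)(y)(z-y).
\]
Because $\psi_\infty+\varphi_Y$ is $C^{1,1}$ by Proposition~\ref{ii:prop:terminal}, the Taylor estimate of \eqref{ii:eq:taylorlower}, applied in both directions, bounds this by $C|z-y|^2\le C(8L)^2r^2$ on $B_{8Lr}(y)$. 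Thus $C_L=64CL^2$.

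The hypothesis $U\ge\Phi^*\psi_\infty+n-C\tau$ follows from \eqref{ii:eq:terminaloneside}. The only real input in the whole lemma is this $C^{1,1}$ bound; everything else is bookkeeping of constants. The step I expect to need the most care is the requirement that the ball stay inside a fixed chart. This holds once $s_0$ is large enough that $8Lr<r_*$.
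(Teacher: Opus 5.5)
Your proposal is correct and follows the paper's proof essentially step by step. It uses the same affine-Taylor construction of $\ell$ from the $C^{1,1}$ bound, the same derivation of \eqref{ii:eq:untwisted} from \eqref{ii:eq:Uident}, and the same comparison constant $e(C_L+C_0)+d$; the only difference is cosmetic, in that the paper takes $C_0\ge C_L$ so the two $r^2e^s$ terms combine with a nonnegative sign, whereas you bound them separately on the time window and absorb the slack into $d$.
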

\begin{proof}
Choose a fixed smooth local potential $\psi$ with $\ddc\psi=\omega_Y$. Let $P_y$ be the real affine Taylor polynomial of $\psi_\infty+n+\psi$ at $y$, and put $\ell=P_y-\psi$. The function $P_y$ is pluriharmonic. The $C^{1,1}$ bound and the Taylor integral formula give
\[
 \ddc\ell=-\omega_Y,\quad
 |\psi_\infty(z)+n-\ell(z)|
 =|\psi_\infty(z)+n+\psi(z)-P_y(z)|\le C|z-y|^2\le C_Lr^2.
\]
The construction is valid on a neighborhood of the closed ball after first fixing a larger coordinate neighborhood.

The assumed one-sided bound gives
\[
 e^s(U-\Phi^*\ell)
 =e^s(U-\Phi^*\psi_\infty-n)+e^s\Phi^*(\psi_\infty+n-\ell)
 \ge-C-C_Lr^2e^s.
\]
Choose $C_0\ge C_L$ and $d\ge C+1$. Then \eqref{ii:eq:vdef} gives $v\ge1$ on the whole stated cylinder.

We check the equations, keeping $r,s_0,\ell$ fixed during differentiation. Since $\dd t/\dd s=\tau$ and $\Delta_{\widetilde{\omega}}=\tau\Delta_\omega$, \eqref{ii:eq:Uident} gives
\begin{align}
 \Ric(\widetilde{\omega})+\ddc u_0&=\widetilde{\omega}-e^s\Phi^*\omega_Y,\notag\\
 \Box_{\widetilde{\omega}} u_0
 &=u_0+e^s\tr_{\widetilde{\omega}}\Phi^*\omega_Y,
 &(u_0)_s&=u_0+n-R_{\widetilde{\omega}}.\label{ii:eq:uglobalnormalized}
\end{align}
Also
\[
 \Delta_{\widetilde{\omega}}\Phi^*\ell=-\tr_{\widetilde{\omega}}\Phi^*\omega_Y,
 \qquad
 \Box_{\widetilde{\omega}} (-e^s\Phi^*\ell)
 =-e^s\Phi^*\ell-e^s\tr_{\widetilde{\omega}}\Phi^*\omega_Y.
\]
The horizontal trace terms cancel. The term $C_0r^2e^s$ is spatially constant and has time derivative itself, while $d$ has zero derivatives. Consequently
\[
 \Box_{\widetilde{\omega}} v=u_0-e^s\Phi^*\ell+C_0r^2e^s=v-d.
\]
Similarly,
\[
 \Ric(\widetilde{\omega})+\ddc v
 =\widetilde{\omega}-e^s\Phi^*\omega_Y-e^s\Phi^*(\ddc\ell)=\widetilde{\omega}.
\]
Differentiating \eqref{ii:eq:vdef} in time and using \eqref{ii:eq:uglobalnormalized} proves the last equation in \eqref{ii:eq:untwisted}. Finally,
\[
 v-e^s(U-\Phi^*\psi_\infty-n)
 =e^s\Phi^*(\psi_\infty+n-\ell)+C_0r^2e^s+d,
\]
whose absolute value is at most $(C_L+C_0)e+d$ because $r^2e^s=e^{s-s_0}\le e$ on the cylinder. This proves \eqref{ii:eq:vcompare}.
\end{proof}

\section{Proof of Theorem 1.1}\label{ii:sec:scalar}

The estimates of \cite[Theorem~1.6]{JST} now apply, using the
tube-volume bound \eqref{ii:eq:tubevolume} from
Section~\ref{ii:sec:preliminaries} and the Ricci vertices constructed
in Section~\ref{ii:sec:vertices}. There is one
geometric point to check: the tube controlled by that theorem
must contain the entire prescribed fibre, even though the vertex
need not lie on it. A fixed enlargement of the auxiliary base
scale achieves this, and the horizontal volume estimate controls
the corresponding larger tube. This proves
Theorem~\ref{ii:thm:main} with constants independent of the fibre.

\begin{proof}[Proof of Theorem~\ref{ii:thm:main}]
First assume $T=1$. Fix $y\in Y$ and a sufficiently late time $t$.
Let $p=p_{y,t}$ be a vertex in
Proposition~\ref{ii:prop:vertices}, and write
\[
 z=\Phi(p),\qquad r=\sqrt{1-t}.
\]
Choose a fixed $\Lambda>D_0+1$, and use the auxiliary base metric
$g_Y=\Lambda^{-2}g_{\omega_Y}$. This metric specifies the base tubes;
it does not change the representative $\theta_{Y,\rho_y}$ defining
the vertex. Since $d_{g_Y}=\Lambda^{-1}d_{\omega_Y}$,
\[
 d_{g_Y}(z,y)\le\Lambda^{-1}D_0r<r,\qquad
 F_y\subset\Phi^{-1}(B_{g_Y}(z,r)).
\]
In particular, the unit-scale tube around the vertex contains the
entire fibre whose geometry is to be estimated.

The volume hypothesis of \cite[Theorem~1.6]{JST} concerns the
tenfold base tube at the same time. By \eqref{ii:eq:real} and
\eqref{ii:eq:tubevolume},
\begin{align}
 \Vol_{g(t)}\Phi^{-1}(B_{g_Y}(z,10r))
 &=2^n\Vol_{\omega(t)}\Phi^{-1}(B_{\omega_Y}(z,10\Lambda r))\notag\\
 &\le C(1-t)^{n-m}(10\Lambda r)^{2m}\notag\\
 &=C(10\Lambda)^{2m}(1-t)^n
 =A_\Lambda(1-t)^n.\label{ii:eq:jst-tubevolume}
\end{align}
The centre satisfies $d_{\omega_Y}(z,y)\le D_0r$. Thus the entire
tenfold ball lies in $B_{\omega_Y}(y,(10\Lambda+D_0)r)$. Choose $t$
so late that $(10\Lambda+D_0)r$ is smaller than the fixed buffer
width of the product charts. Taking $t$ still closer to one if
necessary also makes the smaller geodesic balls connected. The threshold
and all coordinate comparisons are uniform in a finite atlas. The constant $A_\Lambda$ is fixed independently of $y,t$.

The point $p$ is a Ricci vertex for $\theta_{Y,\rho_y}$ by
\eqref{ii:eq:vertexidentity}. The perturbations $\rho_y$ have uniform
$C^4$ bounds, so the dependence on them in
\cite[Theorem~1.6]{JST} is uniform; see the explicit dependence in
\cite[Proposition~5.2 and proof of Theorem~6.8]{JST} and
Remark~\ref{ii:rem:analytic-inputs}. Applying that theorem in the
metric conventions of \eqref{ii:eq:real} yields
\[
 \sup_{\Phi^{-1}(B_{g_Y}(z,r))}|R_{\omega(t)}|
       \le\frac C{1-t},\qquad
 \diam_{(X,g(t))}\Phi^{-1}(B_{g_Y}(z,r))
       \le C\sqrt{1-t}.
\]
Restriction to $F_y$ proves both assertions at late times.
The diameter in the cited theorem is the ambient diameter.
Since $y$ was arbitrary, the scalar bound is global. Smoothness
on the remaining compact time interval extends both estimates to
$0\le t<1$.

For general $T$, put $\widehat t=t/T$ and
$\widehat\omega(\widehat t)=T^{-1}\omega(T\widehat t)$.
The corresponding real metric is
$\widehat g(\widehat t)=T^{-1}g(t)$, and
\[
 R_{\widehat\omega}(\widehat t)=TR_\omega(t),\qquad
 d_{\widehat g(\widehat t)}=T^{-1/2}d_{g(t)},\qquad
 1-\widehat t=\frac{T-t}{T}.
\]
The estimates just proved become
\[
 T|R_\omega(t)|\le\frac{CT}{T-t},\qquad
 T^{-1/2}\diam_{(X,g(t))}F_y
      \le C\sqrt{\frac{T-t}{T}},
\]
which give \eqref{ii:eq:main}.
\end{proof}

\begin{remark}\label{ii:rem:jst-packing-credit}
The passage to diameter control is the Harnack and ball-packing
argument of \cite[Theorem~1.6 and proof of Theorem~6.8]{JST}.
The preceding computation verifies its tube-volume hypothesis;
their theorem then gives both estimates on the prescribed fibre.
\end{remark}

\begin{remark}\label{ii:rem:local-scalar}
For the local assertion in Remark~\ref{ii:rem:local}, fix
$K\Subset Y^\circ$ and buffered regular product charts covering
$K$. Remark~\ref{ii:rem:local-jst} places the required global
vertices in these charts and proves \eqref{ii:eq:anchor} with a
constant $D_0$ depending on $K$. Choose $\Lambda>D_0+1$ with this
constant. At sufficiently late times all tenfold tubes in
\eqref{ii:eq:jst-tubevolume} stay inside the buffered charts.
The compact-local horizontal estimate and the fibre-volume
identity give precisely the same volume calculation there.
The global smooth representatives have uniform ambient $C^4$
bounds, as in Remark~\ref{ii:rem:local-jst}. Thus
\cite[Theorem~1.6]{JST} applies to each of these tubes and gives
both estimates uniformly over $K$. Enlarging the constants on a
compact initial time interval completes the local assertion.
\end{remark}

\section{Relative potentials and fibre estimates}\label{ii:sec:relative}

The normalized Ricci potential contains a large term pulled back
from the base. After subtracting its terminal profile, the
remaining potential and its relevant derivatives are uniformly
bounded. The quotient determinant relates this relative potential
to the intrinsic Ricci potential of the fibre. This gives, in
particular, a uniform intrinsic Poincar\'e inequality, which will
prevent fibre volume from being lost in a tangent limit.

Throughout this section, normalize $T=1$ and put
$\tau=1-t=e^{-s}$ and $\widetilde{\omega}=\tau^{-1}\omega$.
Let $\psi_\infty\in C^{1,1}(Y)$ be the limiting base potential
from Section~\ref{ii:sec:terminal}, so that
$\phi(t)\to\Phi^*\psi_\infty$ uniformly. Using $U$ from
\eqref{ii:eq:Uintro}, set
\[
 u_0=e^sU,\qquad
 w=e^s(U-\Phi^*\psi_\infty-n),\qquad
 \theta_\infty=\omega_Y+\ddc\psi_\infty.
\]
Thus $w$ is the relative potential obtained by subtracting the
limiting base profile from $u_0$.

\begin{theorem}\label{ii:thm:relative}
Under the hypotheses of Theorem~\ref{ii:thm:main}, one has
\begin{equation}\label{ii:eq:relativeintro}
 c\omega_Y\le\theta_\infty\le C\omega_Y,\qquad
 |w|+|w_s|+|\partial w|_{\widetilde{\omega}}^2+|\Delta_{\widetilde{\omega}} w|\le C.
\end{equation}
The current $\theta_\infty$ has bounded measurable coefficients, and the Laplacian bound is understood weakly and almost everywhere. The exponentially normalized intrinsic Ricci potentials of $\widetilde{\omega}(s)|_{F_y}$ are uniformly bounded. Moreover, if
\[
 \dd\nu_{y,t}=\frac{\dd V_{\omega(t)|_{F_y}}}{\Vol_{\omega(t)|_{F_y}}F_y},
 \qquad \bar v_{y,t}=\int_{F_y}v\,\dd\nu_{y,t},
\]
then every smooth real function $v$ on $F_y$ satisfies
\begin{equation}\label{ii:eq:poincareintro}
 \int_{F_y}|v-\bar v_{y,t}|^2\dd\nu_{y,t}
 \le C(T-t)\int_{F_y}|\partial v|^2_{\omega(t)|_{F_y}}\dd\nu_{y,t}.
\end{equation}
If $n-m=1$, the intrinsic fibre diameter is also bounded above by $C\sqrt{T-t}$.
\end{theorem}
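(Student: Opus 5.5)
The plan is to derive everything from the two-sided control of the localized potentials $v_y$ of Proposition~\ref{ii:prop:differential}, restricted to the fibre $F_y$, combined with the ambient diameter bound of Theorem~\ref{ii:thm:main}.

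\emph{Bounds on $w$.} Since $\rho_y(y)=0$, on $F_y$ we have
\[
 w=v_y-C_0-1 .
\]
Hence $w\ge -C_0$ on $F_y$ because $v_y\ge1$. For the upper bound, \eqref{ii:eq:jst-differential} says that $\sqrt{v_y}$ is uniformly Lipschitz for $\widetilde{\omega}$, and $v_y(p_{y,t})\le C$ at the vertex. The vertex lies in $\Phi^{-1}(B_{g_Y}(z,r))$, which also contains $F_y$. By the proof of Theorem~\ref{ii:thm:main}, this tube has $\widetilde{\omega}$-diameter at most $C$. Therefore $v_y\le C$ on $F_y$, and since $y$ is arbitrary, $|w|\le C$ on $X$. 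For the gradient, on $F_y$ we have
\[
 \partial w=\partial v_y-e^s\,\partial\Phi^*(\psi_\infty+\rho_y).
\]
The last term vanishes at base points where $D(\psi_\infty+\rho_y)(y)=0$, which holds by the well construction since $DP_y(y)=-D\psi_\infty(y)$. Thus $|\partial w|^2_{\widetilde{\omega}}\le Cv_y\le C$ on $F_y$, and again $y$ is arbitrary.

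\emph{Time derivative and Laplacian.} From \eqref{ii:eq:uglobalnormalized},
\[
 w_s=w+n-R_{\widetilde{\omega}},\qquad
 \Delta_{\widetilde{\omega}}w=n-R_{\widetilde{\omega}}-\tr_\omega\Phi^*\theta_\infty ,
\]
the second understood weakly. Here $R_{\widetilde{\omega}}=\tau R_\omega$ is bounded by Theorem~\ref{ii:thm:main}, and $\tr_\omega\Phi^*\theta_\infty\le C\tr_\omega\Phi^*\omega_Y\le C$ once $\theta_\infty\le C\omega_Y$ is known.

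\emph{Bounds on $\theta_\infty$.} I obtain both bounds on horizontal slices $y\mapsto(y,z_0)$. On a slice,
\[
 \omega(t)|_{\rm slice}=\tau\omega_0|+t\omega_Y+\ddc_y\phi|_{\rm slice}.
\]
The Schwarz estimate \eqref{ii:eq:basic} and Lemma~\ref{ii:lem:horizontal} pinch this form between $c\omega_Y$ and $C\omega_Y$. By Proposition~\ref{ii:prop:terminal}, $\phi|_{\rm slice}\to\psi_\infty$ in $C^1$ with uniform $C^{1,1}$ bounds, so the second derivatives converge weakly-$*$ in $L^\infty$. Passing to the limit gives $c\omega_Y\le\omega_Y+\ddc\psi_\infty\le C\omega_Y$ almost everywhere, with bounded measurable coefficients.

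\emph{Fibre Ricci potential.} Using the Schur factorization \eqref{ii:lim:eq:schur},
\[
 \Ric(\widetilde{\omega})|_{F_y}=\Ric(h_y)+\ddc_{F_y}D_y .
\]
Restricting \eqref{ii:eq:vertexidentity} (with $\rho=0$) to $F_y$, where the pulled-back base term dies, gives
\[
 \Ric(h_y)+\ddc(w+D)|_{F_y}=h_y .
\]
So the intrinsic Ricci potential is $w+D$ plus a normalizing constant, which is bounded by $|w|\le C$ and $|D|\le C$ from Lemma~\ref{ii:pre:lem:quotient}.

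\emph{Poincaré inequality.} Let $f=w+D$ on $F_y$. The Bakry--\'Emery Ricci form $\Ric(h_y)+\ddc f=h_y$ gives, by the standard Bochner argument on $(1,0)$-gradients, first eigenvalue at least $1$ for the weighted operator on the measure $e^{-f}dV_{h_y}$. Since $|f|\le C$, the weighted and unweighted variances and Dirichlet energies are comparable. This yields \eqref{ii:eq:poincareintro} for $h_y$ with constant $C$, and rescaling $h_y=\tau^{-1}\omega(t)|_{F_y}$ produces the factor $T-t$.

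\emph{The case $n-m=1$.} Here $F_y$ is a $2$-sphere of fixed area $V_F$. I would bound the intrinsic diameter by controlling the Gauss curvature $K=1-\Delta_{h_y}f$ from below, or at least in $L^p$. Alternatively I would compare $h_y$ conformally with a round metric in a normalized gauge, using $|f|\le C$. This step is the main obstacle: the fibrewise Laplacian of $w+D$ is not directly controlled by the ambient bound on $\Delta_{\widetilde{\omega}}w$. I would first try to combine the Poincaré inequality and the bound $|\partial w|\le C$ with a covering argument, showing that small intrinsic balls have area bounded below, which forces bounded diameter.
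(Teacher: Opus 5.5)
Your proofs of the current bound, the bounds on $w$, $w_s$, $\partial w$ and $\Delta_{\widetilde{\omega}}w$, the fibre Ricci potential bound and the Poincar\'e inequality are sound. Two of them take different routes from the paper.
\begin{itemize}
\item You get $|w|\le C$ from the Lipschitz bound for $\sqrt{v_y}$, the vertex bound $v_y(p_{y,t})\le C$, and the bounded rescaled diameter of the tube $\Phi^{-1}(B_{g_Y}(z,r))\supset F_y$. The paper instead integrates $|U_t|=|n-R_{\widetilde{\omega}}|\le C$ from $t$ up to $T$, which gives $|U-\Phi^*\psi_\infty-n|\le C\tau$ in one line.
\item You derive $c\omega_Y\le\theta_\infty\le C\omega_Y$ from the two-sided horizontal slice estimates and weak-$*$ convergence of the uniformly bounded base Hessians. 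The paper cites Fu--Zhang's limiting-current estimate instead.
\end{itemize}
There is one slip. The factorization $\det\widetilde{\omega}=\det(\widetilde{\omega}_y)\det H$ gives $\Ric(\widetilde{\omega})|_{F_y}=\Ric(h_y)-\ddc_{F_y}D_y$, so the fibre Ricci potential is $w-D$ plus a constant, not $w+D$. This is harmless, since only $|f_y|$ is used.

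The genuine gap is the last assertion. For $n-m=1$ you do not prove the intrinsic diameter bound, and the route you favour would not give it. A uniform Poincar\'e constant at fixed area does not bound diameter. For example, a round sphere with a thin spike whose cross-sectional radius decays like $e^{-x^2/4}$ keeps a bounded Poincar\'e constant, by the Gaussian Poincar\'e inequality, however long the spike is. So your covering step would need an intrinsic non-collapsing estimate for $h_y$ that your outline does not supply.

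The paper obtains this assertion by quoting the $\mathbb P^1$-bundle result \cite[Theorem~1.2, equation~(1.10)]{Splitting}. It also follows in a few lines from the bound $|f_y|\le C$ you already proved. In complex dimension one every conformal change of $h_y$ is again K\"ahler, and $\tilde h_y:=e^{-f_y}h_y$ satisfies
\[
 \Ric(\tilde h_y)=\Ric(h_y)+\ddc f_y=h_y=e^{f_y}\tilde h_y .
\]
So its Gauss curvature is, up to the fixed normalization of the real metric, $e^{f_y}\ge e^{-C}$. Bonnet--Myers then bounds $\diam(\tilde h_y)$ uniformly. The pinching $h_y\le e^{C}\tilde h_y$ transfers this bound to $h_y$, and $\omega(t)|_{F_y}=\tau h_y$ gives the bound $C\sqrt{T-t}$.

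Your conformal-comparison idea was the right instinct. The metric to compare with is $e^{-f_y}h_y$, not a gauge-normalized round metric, and no control of $\Delta_{h_y}f_y$ is needed.
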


Write $\widetilde{\omega}_y=\widetilde{\omega}|_{F_y}$. The quotient metrics $H_t,H$, determinant function $D$, and fibre
volume $V_F$ are those of Section~\ref{ii:pre:quotient}.

\begin{proposition}\label{ii:rel:current}
Let $\psi_\infty$ be the limiting base potential. Then
\begin{equation}\label{ii:rel:currentbound}
 c\omega_Y\leq\theta_\infty:=\omega_Y+\ddc\psi_\infty\leq C\omega_Y
\end{equation}
as currents. In particular, $\theta_\infty$ has bounded measurable
coefficients.
\end{proposition}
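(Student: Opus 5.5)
The plan is to obtain both inequalities by restricting the flow metric to horizontal slices of a product chart and passing to the limit $t\to1$ in the sense of currents. Here $\psi_\infty$ is the uniform limit of $\phi(t)$ from Proposition~\ref{ii:prop:terminal}. Since the statement is local on $Y$ and $Y$ is compact, it suffices to work on a fixed inner product chart $\Phi^{-1}(B')\simeq B'\times F$ of the finite atlas.

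\emph{Step 1: the slice identity.} Fix $z_0\in F$ and let $\sigma(y)=(y,z_0)$, a holomorphic section of $\Phi$ over $B'$. Then $\sigma^*\Phi^*\omega_Y=\omega_Y$. Pulling back $\omega(t)=\widehat\omega_t+\ddc\phi(t)$ with $\widehat\omega_t=\tau\omega_0+t\Phi^*\omega_Y$ gives
\[
 \sigma^*\omega(t)=\tau\,\sigma^*\omega_0+t\,\omega_Y+\ddc\bigl(\phi(t)\circ\sigma\bigr)
\]
as smooth forms on $B'$.

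\emph{Step 2: two-sided bounds at finite time.} The Schwarz bound $\omega(t)\ge C^{-1}\Phi^*\omega_Y$ in Lemma~\ref{ii:lem:basic} pulls back under $\sigma$ to $\sigma^*\omega(t)\ge C^{-1}\omega_Y$. The Fu--Zhang horizontal estimate of Lemma~\ref{ii:lem:horizontal} gives $\sigma^*\omega(t)=\omega(t)|_{B'\times\{z_0\}}\le C\omega_Y$. Both constants are uniform in $z_0$ and in $t$.

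\emph{Step 3: passage to the limit.} By Proposition~\ref{ii:prop:terminal}, $\phi(t)\circ\sigma\to\psi_\infty$ uniformly on $B'$. Therefore $\ddc(\phi(t)\circ\sigma)\to\ddc\psi_\infty$ weakly as currents, tested against smooth compactly supported forms. Moreover $\tau\sigma^*\omega_0\to0$ and $t\omega_Y\to\omega_Y$ smoothly. Positivity of $(1,1)$-currents is preserved under weak limits. Applying this to $\sigma^*\omega(t)-C^{-1}\omega_Y\ge0$ and to $C\omega_Y-\sigma^*\omega(t)\ge0$ yields
\[
 C^{-1}\omega_Y\le\omega_Y+\ddc\psi_\infty\le C\omega_Y
\]
on $B'$. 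The limit current does not depend on $z_0$, because $\psi_\infty$ is defined on $Y$. A finite cover of $Y$ by such inner balls then gives \eqref{ii:rel:currentbound} globally with uniform constants.

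\emph{Step 4: regularity and the main point to check.} The second conclusion is immediate from $\psi_\infty\in C^{1,1}(Y)$: in each chart $D^2\psi_\infty\in L^\infty$, so $\ddc\psi_\infty$ has bounded measurable coefficients, and the current inequalities then hold almost everywhere as inequalities of Hermitian matrices. There is no real obstacle here. The only points that need checking are that the Schwarz and horizontal bounds restrict correctly to a holomorphic slice, which uses $\sigma^*\Phi^*\omega_Y=\omega_Y$, and that uniform convergence of the potentials, rather than any convergence of the metrics, suffices for weak convergence of $\ddc$. Both are standard.
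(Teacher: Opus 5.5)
Your argument is correct, but it takes a different route from the paper. The paper cites Fu--Zhang's limiting-current theorem \cite[Theorem~1.1]{FuZhang}, which gives $c\Phi^*\omega_Y\le\omega_T\le C\Phi^*\omega_Y$ on $X$ for the limiting current $\omega_T$. It then uses the uniform convergence \eqref{ii:eq:terminal} to identify $\omega_T=\Phi^*\theta_\infty$, and descends the inequalities to $Y$ in product charts.

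You reverse the order: you descend first and take the limit second. At each finite time you pull $\omega(t)$ back along the holomorphic slices $y\mapsto(y,z_0)$. There the Schwarz bound of Lemma~\ref{ii:lem:basic} and the horizontal estimate of Lemma~\ref{ii:lem:horizontal} give a two-sided bound uniform in $t$ and $z_0$. Only then do you pass to the weak limit, using the uniform convergence of $\phi(t)$ from Proposition~\ref{ii:prop:terminal}.

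What your version buys is self-containedness. Its only inputs are estimates already recorded in the paper. You never have to interpret an upper bound by $C\Phi^*\omega_Y$ on $X$ for a current that vanishes in fibre directions. The argument also carries over verbatim to compact subsets of the regular product locus, as in Remark~\ref{ii:rem:local}. The paper's route is shorter, but it hides exactly this limiting step inside the citation.

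One minor aside: the bounded-measurable-coefficients conclusion already follows from the two-sided current bound. The coefficient measures of the positive current $\theta_\infty-c\,\omega_Y$ are dominated by its trace measure, which is at most a bounded multiple of the volume form. So the appeal to $C^{1,1}$ regularity there is convenient but not needed.
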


This is the limiting-current estimate of Fu--Zhang
\cite[Theorem~1.1]{FuZhang}. Indeed, \eqref{ii:eq:terminal} identifies
its limiting current as
$\omega_T=\Phi^*(\omega_Y+\ddc\psi_\infty)=\Phi^*\theta_\infty$.
Their inequalities $c\Phi^*\omega_Y\le\omega_T\le C\Phi^*\omega_Y$
descend in local product charts to \eqref{ii:rel:currentbound}.

To relate $w$ to the smooth potential $v_y$ of
Proposition~\ref{ii:prop:differential}, observe that
\[
 v_y=w+C_0+1+e^s\Phi^*\bigl(\psi_\infty+\rho_y-\psi_\infty(y)\bigr).
\]
The last term is nonnegative and vanishes, together with its first
spatial derivatives, on $F_y$. Thus $v_y=w+C_0+1$ and
$\partial v_y=\partial w$ there, including in the horizontal
directions. This allows the Li--Yau estimate for $v_y$ to control
the gradient of $w$, although $\psi_\infty$ is only $C^{1,1}$.

\begin{proposition}\label{ii:rel:potential}
There is a constant $C>0$ such that
\begin{equation}\label{ii:rel:wbound}
 |w|+|w_s|+|\partial w|_{\widetilde{\omega}}^2+|\Delta_{\widetilde{\omega}} w|\leq C.
\end{equation}
The Laplacian identity and its bound hold weakly and almost everywhere.
\end{proposition}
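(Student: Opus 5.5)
The plan is to prove the estimate pointwise, at an arbitrary point $x\in F_y$ and time $s$, by comparing $w$ with the smooth potential $v_y$ of Proposition~\ref{ii:prop:differential}. The key input is the observation just recorded: on $F_y$ we have $v_y=w+C_0+1$ and $\partial v_y=\partial w$, including the horizontal directions. Since $y$ is arbitrary, it suffices to bound each term at points of the fibre over $y$, with constants uniform in $y$.

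\emph{Step 1 (the bound on $|w|$).} The lower bound comes from the one-sided estimate \eqref{ii:eq:terminaloneside}, which gives $w\ge -C$ everywhere. For the upper bound, Proposition~\ref{ii:prop:differential} gives $v_y\le C$ at the vertex $p_{y,t}$ and $|\partial v_y|^2_{\widetilde\omega}\le Cv_y$, so $\sqrt{v_y}$ is $C$-Lipschitz for $\widetilde\omega$. By Theorem~\ref{ii:thm:main}, in normalized form, the $\widetilde\omega$-distance from $p_{y,t}$ to any point of $F_y$ is bounded: the tube $\Phi^{-1}(B_{g_Y}(z,r))$ contains both $p_{y,t}$ and $F_y$ and has normalized diameter at most $C$. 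Integrating the Lipschitz bound along a connecting path therefore gives $v_y\le C$ on $F_y$, hence $w=v_y-C_0-1\le C$ there.

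\emph{Step 2 (gradient and Laplacian).} With $v_y\le C$ on $F_y$ in hand, Proposition~\ref{ii:prop:differential} gives $|\partial w|^2_{\widetilde\omega}=|\partial v_y|^2_{\widetilde\omega}\le C$ at every point of $F_y$, with uniform constants. For the Laplacian, I would first compute, weakly, that $\Delta_{\widetilde\omega}w=n-R_{\widetilde\omega}-e^s\tr_{\widetilde\omega}\Phi^*\theta_\infty$. This follows from \eqref{ii:eq:Uident} by subtracting $e^s\Phi^*\ddc\psi_\infty$, where $\ddc\psi_\infty$ is a bounded measurable current by Proposition~\ref{ii:rel:current}. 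Theorem~\ref{ii:thm:main} bounds $e^{-s}|R_{\widetilde\omega}|=\tau R_\omega$ by $C$, so $R_{\widetilde\omega}$ is bounded. Moreover $e^s\tr_{\widetilde\omega}\Phi^*\theta_\infty\le Ce^s\tr_{\widetilde\omega}\Phi^*\omega_Y=C\tr_\omega\Phi^*\omega_Y\le C$ by the Schwarz bound and \eqref{ii:rel:currentbound}. Together these bound $|\Delta_{\widetilde\omega}w|$ almost everywhere. Alternatively, the same bound can be read off pointwise on $F_y$ from $v_y$, using Proposition~\ref{ii:prop:differential} and the well term. The point is that $e^s\Delta_{\widetilde\omega}\Phi^*(\psi_\infty+\rho_y)$ is only $C^{1,1}$, so pointwise comparison fails; this is why I prefer the weak formula.

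\emph{Step 3 (time derivative).} Since $\psi_\infty$ is time independent, $w_s=w+e^sU_s\cdot$ (appropriately) reduces to $w_s=w+n-R_{\widetilde\omega}$. This uses $(u_0)_s=u_0+n-R_{\widetilde\omega}$ from \eqref{ii:eq:uglobalnormalized} and $\partial_s(e^s\Phi^*(\psi_\infty+n))=e^s\Phi^*(\psi_\infty+n)$. It is then bounded by Step 1 and the scalar bound.

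The main obstacle is Step 1: transporting the vertex bound to the whole fibre requires the ambient diameter bound from Theorem~\ref{ii:thm:main} to hold in the normalized metric, uniformly in $y$, including the vertex. A secondary concern is justifying the weak Laplacian identity, given that $\psi_\infty$ is only $C^{1,1}$.
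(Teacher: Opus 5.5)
Your argument is correct, and Steps 2 and 3 coincide with the paper's. Both use the weak identity $\Delta_{\widetilde\omega}w=n-R_{\widetilde\omega}-e^s\tr_{\widetilde\omega}\Phi^*\theta_\infty$ together with the current bound \eqref{ii:rel:currentbound}, the identity $w_s=w+n-R_{\widetilde\omega}$, and the gradient bound read off from $v_y$ on $F_y$, where $\partial v_y=\partial w$.

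The genuine difference is how you obtain $|w|\le C$, equivalently $v_y\le C$ on $F_y$.

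\emph{The paper's route.} It integrates in time. The scalar bound gives $|U_t|=|n-R_{\widetilde\omega}|\le C$, so $|U(x,t')-U(x,t)|\le C(t'-t)$. Letting $t'\to1$ and using the uniform convergence of Proposition~\ref{ii:prop:terminal} gives the rate $|U-\Phi^*\psi_\infty-n|\le C\tau$, that is, $|w|\le C$ globally in one line. The bound $v_y\le C$ on $F_y$ then comes for free.

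\emph{Your route.} You propagate spatially from the vertex. The lower bound comes from \eqref{ii:eq:terminaloneside}. For the upper bound you use three facts: $\sqrt{v_y}$ is uniformly Lipschitz for $\widetilde g$, $v_y(p_{y,t})\le C$, and $p_{y,t}$ lies within normalized ambient distance $C$ of every point of $F_y$. The last fact can be taken from the tube-diameter conclusion in the proof of Theorem~\ref{ii:thm:main}. Alternatively, take it from the statement of that theorem plus a horizontal lift, via \eqref{ii:eq:SQ}, of a base segment of length at most $D_0\sqrt\tau$. This is valid and uniform in $y$.

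\emph{Comparison.} Your route reruns a piece of the Harnack argument of \cite{JST}, drawing on vertex localization, the Li--Yau bound and the diameter bound. The paper's route needs only the scalar bound, which you need anyway for $w_s$ and $\Delta_{\widetilde\omega}w$. It also yields the explicit convergence rate $C\tau$.

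Three small slips to fix:
\begin{enumerate}
\item The scaling is $R_{\widetilde\omega}=\tau R_\omega$, not $e^{-s}|R_{\widetilde\omega}|=\tau R_\omega$.
\item The intermediate identity in Step 3 is $w_s=w+U_t$.
\item The two-sided Laplacian bound also needs the lower bound $e^s\tr_{\widetilde\omega}\Phi^*\theta_\infty\ge0$, which follows from $\theta_\infty\ge c\,\omega_Y$.
\end{enumerate}
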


\begin{proof}
Theorem~\ref{ii:thm:main} and $R_{\widetilde{\omega}}=\tau R_\omega$ give
$|R_{\widetilde{\omega}}|\leq C$. Hence $U_t=n-R_{\widetilde{\omega}}$ implies $|U_t|\leq C$.
For $t<t'<1$, integration at a fixed point $x\in X$ gives
\[
 |U(x,t')-U(x,t)|\leq C(t'-t).
\]
Letting $t'\uparrow1$ and using \eqref{ii:eq:terminal}, we obtain
\begin{equation}\label{ii:rel:terminalrate}
 |U-\Phi^*\psi_\infty-n|\leq C\tau,\qquad |w|\leq C.
\end{equation}
Since $\dd t/\dd s=\tau$, direct differentiation gives
\[
 w_s=e^s(U-\Phi^*\psi_\infty-n)+e^s\tau U_t=w+U_t.
\]
Also \eqref{ii:eq:Uident} and the invariance of the Ricci form under
constant rescaling give
\[
 \begin{split}
 \ddc w
 &=e^s\ddc U-e^s\Phi^*\ddc\psi_\infty\\
 &=\widetilde{\omega}-\Ric(\widetilde{\omega})-e^s\Phi^*(\omega_Y+\ddc\psi_\infty).
 \end{split}
\]
Consequently
\begin{equation}\label{ii:rel:wequations}
 w_s=w+n-R_{\widetilde{\omega}},\qquad
 \Delta_{\widetilde{\omega}} w=n-R_{\widetilde{\omega}}-e^s\tr_{\widetilde{\omega}}\Phi^*\theta_\infty.
\end{equation}
The first identity proves $|w_s|\leq C$. For the second, the pullback
of $\psi_\infty$ is defined as the function $\psi_\infty\circ\Phi$; it is
$C^{1,1}$ in every product chart. Thus the displayed second-derivative
identities are valid in distributions and almost everywhere. Moreover,
\[
 0\leq e^s\tr_{\widetilde{\omega}}\Phi^*\theta_\infty
 \leq Ce^s\tr_{\widetilde{\omega}}\Phi^*\omega_Y
 =C\tr_\omega\Phi^*\omega_Y\leq C
\]
almost everywhere, by \eqref{ii:rel:currentbound} and the Schwarz
bound \eqref{ii:eq:SQ} from Section~\ref{ii:sec:preliminaries}. This proves the
Laplacian bound.

For the gradient bound, fix $y\in Y$ and use $v_y$ from
Proposition~\ref{ii:prop:differential}, which satisfies
\[
 |\partial v_y|_{\widetilde{\omega}}^2\le Cv_y.
\]
Since $\psi_\infty+\rho_y$ has its minimum at $y$ and
$\rho_y(y)=0$, on the entire fibre $F_y$ we have
\[
 v_y=w+C_0+1,\qquad
 \partial v_y-\partial w
   =e^s\Phi^*\partial(\psi_\infty+\rho_y)=0.
\]
The second equality is an equality of ambient covectors at points
of $F_y$, including their horizontal components. By
\eqref{ii:rel:terminalrate}, $v_y\le C$ on $F_y$. Consequently
\[
 |\partial w|_{\widetilde{\omega}}^2\big|_{F_y}
 =|\partial v_y|_{\widetilde{\omega}}^2\big|_{F_y}\le C.
\]
All constants are independent of $y$ and $s$, which completes the
proof.
\end{proof}

\begin{remark}\label{ii:rem:jst-relative-credit}
The relative gradient estimate thus follows from
\cite[Proposition~5.2]{JST}: the function $\psi_\infty+\rho_y$
has vanishing differential at the chosen base point, so the ambient
derivatives agree on its fibre.
\end{remark}

Let $f_y(s)$ be the normalized intrinsic Ricci potential of $\widetilde{\omega}_y$:
\begin{equation}\label{ii:rel:intrinsic}
 \Ric(\widetilde{\omega}_y)+\ddc_{F_y}f_y=\widetilde{\omega}_y,
 \qquad \int_{F_y}e^{-f_y}\,\dd V_{\widetilde{\omega}_y}=V_F.
\end{equation}
The next computation constructs this potential explicitly, and so also
verifies its existence and its normalization.

\begin{proposition}\label{ii:rel:fbound}
The potentials in \eqref{ii:rel:intrinsic} satisfy $|f_y|\leq C$, uniformly
in $y$ and $s$.
\end{proposition}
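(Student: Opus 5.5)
The plan is to write $f_y$ explicitly in terms of $w$, the quotient determinant $D$ and the horizontal current $\theta_\infty$, and then read off the bound from Proposition~\ref{ii:rel:potential} and Lemma~\ref{ii:pre:lem:quotient}.

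\emph{Step 1: a candidate Ricci potential on the fibre.} By the Schur identity \eqref{ii:lim:eq:schur}, in product coordinates the volume form of $\widetilde\omega$ factors as $\det\widetilde\omega=\det(\widetilde\omega_y)\cdot\det H$. Taking $-\ddc_{F_y}\log$ of both sides and using the trivial normal bundle gives
\[
 \Ric(\widetilde\omega)|_{F_y}=\Ric(\widetilde\omega_y)-\ddc_{F_y}\log\det H .
\]
The last term equals $-\ddc_{F_y}D_q$, because $\det\omega_Y$ and $e^{ms}$ are constant along $F_y$. Restricting the identity for $\ddc w$ from the proof of Proposition~\ref{ii:rel:potential} to $F_y$, the pullback term $e^s\Phi^*\theta_\infty$ restricts to zero. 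This restriction is legitimate since $\psi_\infty\circ\Phi$ is constant on fibres, and I would work with the smooth $u_0$ and $e^s\Phi^*\omega_Y$, which restricts to zero on the fibre. The result is
\[
 \Ric(\widetilde\omega_y)+\ddc_{F_y}(w+D)=\widetilde\omega_y \quad\text{on } F_y .
\]
Hence $f_y=w|_{F_y}+D_y+c_y(s)$, where $c_y(s)$ is the constant fixed by $\int_{F_y}e^{-f_y}\,\dd V_{\widetilde\omega_y}=V_F$. This construction also shows that $f_y$ exists and is smooth.

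\emph{Step 2: bounding each piece.} Proposition~\ref{ii:rel:potential} gives $|w|\le C$. Lemma~\ref{ii:pre:lem:quotient} gives $|D|\le C$, using the definition $D=\log\bigl(\det H_t/\det\omega_Y\bigr)$, so that the exponential normalization $e^{-ms}$ is already absorbed. For the constant, write
\[
 e^{-c_y}=V_F\Big/\int_{F_y}e^{-w-D}\,\dd V_{\widetilde\omega_y},
\]
and use the volume identity \eqref{ii:lim:eq:volumes}, which says $\int_{F_y}\dd V_{\widetilde\omega_y}=V_F$. It follows that $e^{-c_y}\in[e^{-C},e^{C}]$, so $|c_y|\le C$. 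All constants come from the earlier results and are uniform in $y$ and $s$.

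\emph{Main obstacle.} The delicate point is the sign and normalization bookkeeping in the Ricci restriction formula: confirming that $-\ddc\log\det$ of the full metric splits exactly into the fibre Ricci form plus $\ddc_{F_y}$ of the log-determinant of the Schur complement. This requires the block factorization to hold in holomorphic product coordinates. It does, since the reference coordinates $(z,y)$ are holomorphic and $\det A$ is exactly the fibre determinant $\det\widetilde\omega_y$.

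A second point to check is that the $C^{1,1}$ current term does not interfere with the restriction. Restricting $\ddc$ of $\Phi^*\psi_\infty$ to $F_y$ gives zero literally, because the function is constant on each fibre. So only the smooth quantities $u_0$ and $\widetilde\omega$ are restricted, and no weak-derivative issues arise.
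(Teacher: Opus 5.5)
Your proposal follows the paper's proof essentially step for step: the Schur factorization gives $\Ric(\widetilde\omega)|_{F_y}=\Ric(\widetilde\omega_y)-\ddc_{F_y}D$, the smooth $\ddc u_0$ identity is restricted to the fibre, the base term $e^s(\psi_\infty(y)+n)$ is constant on the fibre and drops out, and $|w|,|D|\le C$ together with $\Vol_{\widetilde\omega_y}F_y=V_F$ control the normalizing constant. There is one sign slip: your two displayed identities combine to $\Ric(\widetilde\omega_y)+\ddc_{F_y}(w-D)=\widetilde\omega_y$, not $w+D$, so $f_y=w-D+c_y$ with $e^{-c_y}=V_F/\int_{F_y}e^{D-w}\,\dd V_{\widetilde\omega_y}$ (this is the paper's $f_y=w-D+\log J$), and the resulting bound $|f_y|\le 2\sup|w-D|$ is unaffected.
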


\begin{proof}
The block determinant identity \eqref{ii:lim:eq:schur}, applied to
$\widetilde{\omega}$, gives $\det\widetilde{\omega}=(\det\widetilde{\omega}_y)(\det H)$.
Here the determinants are taken in the corresponding Hermitian coefficient matrices. Since
$\log\det H=D+ms+\log\det\omega_Y$ and the last two terms are constant
on each fibre, taking $-\ddc_{F_y}$ gives
\[
 \Ric(\widetilde{\omega})|_{F_y}=\Ric(\widetilde{\omega}_y)-\ddc_{F_y}D.
\]
Let $\iota_y:F_y\hookrightarrow X$ be the inclusion. Since
$\Phi\circ\iota_y$ is constant,
$\iota_y^*\Phi^*\omega_Y=0$ and
$\iota_y^*(\ddc u_0)=\ddc_{F_y}(u_0|_{F_y})$.
Restriction of \eqref{ii:eq:vertexidentity} with $\rho=0$ therefore gives
\[
 \Ric(\widetilde{\omega})|_{F_y}+\ddc_{F_y}u_0=\widetilde{\omega}_y.
\]
Substituting the determinant identity from the preceding line yields
\[
 \widetilde{\omega}_y
 =\Ric(\widetilde{\omega}_y)-\ddc_{F_y}D+\ddc_{F_y}u_0
 =\Ric(\widetilde{\omega}_y)+\ddc_{F_y}(u_0-D).
\]
Define
\[
 I(y,s)=\frac1{V_F}\int_{F_y}e^{D-u_0}\,\dd V_{\widetilde{\omega}_y},
 \qquad f_y=u_0-D+\log I(y,s).
\]
The function $I(y,s)$ is positive and constant along a fibre, and
\[
 \int_{F_y}e^{-f_y}\,\dd V_{\widetilde{\omega}_y}
 =I(y,s)^{-1}\int_{F_y}e^{D-u_0}\,\dd V_{\widetilde{\omega}_y}=V_F.
\]
This proves \eqref{ii:rel:intrinsic}. Two solutions differ by a real
pluriharmonic function, hence by a constant on the compact connected
fibre; the integral normalization fixes that constant.

For the uniform bound it is useful to cancel the large base term before
taking logarithms. Since $u_0=e^s\bigl(\psi_\infty(y)+n\bigr)+w$ on $F_y$, set
\[
 J(y,s)=\frac1{V_F}\int_{F_y}e^{D-w}\,\dd V_{\widetilde{\omega}_y}.
\]
The base term is constant on $F_y$, so it factors out of the
integral:
\[
 I=e^{-e^s\bigl(\psi_\infty(y)+n\bigr)}J,\qquad
 \log I=-e^s\bigl(\psi_\infty(y)+n\bigr)+\log J.
\]
Thus the two possibly large base terms cancel exactly:
\[
 \begin{aligned}
 f_y&=\bigl(e^s(\psi_\infty(y)+n)+w\bigr)-D
       -e^s(\psi_\infty(y)+n)+\log J\\
    &=w-D+\log J.
 \end{aligned}
\]
Let $M=\sup_{y,s}\|D-w\|_{L^\infty(F_y)}<\infty$.
Since $\Vol_{\widetilde{\omega}_y}F_y=V_F$, integration of
$e^{-M}\le e^{D-w}\le e^M$ gives
\[
 e^{-M}\le J\le e^M,\qquad |\log J|\le M,
 \qquad |f_y|\le |w-D|+|\log J|\le2M.
\]
This argument requires no derivative estimate for $D$.
\end{proof}

\begin{lemma}[{\cite[Lemma~3.1]{CollinsSzekelyhidi}, $\alpha=0$}]
\label{ii:rel:weightedpoincare}
Let $(F,h)$ be a compact connected K\"ahler manifold and let
$\Ric(h)+\ddc f=h$ for a smooth real function $f$. For
$\dd\mu=e^{-f}\dd V_h/\int_F e^{-f}\dd V_h$, every smooth real
function $a$ satisfies
\begin{equation}\label{ii:rel:weightedinequality}
 \int_F\left|a-\int_Fa\,\dd\mu\right|^2\dd\mu
 \le\int_F|\bar\partial a|_h^2\dd\mu.
\end{equation}
\end{lemma}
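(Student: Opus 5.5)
The plan is a Bakry--\'Emery (Hörmander-type $L^2$) argument: the hypothesis $\Ric(h)+\ddc f=h$ says the weighted Ricci form of $(F,h,\mu)$ equals $h$, and a weighted Bochner identity then gives spectral gap at least one for the weighted $\bar\partial$-Laplacian. I write $g_{i\bar j}$ for the coefficients of $h$. Define on complex functions
\[
 \Delta_f a=g^{i\bar j}\bigl(a_{i\bar j}-f_i a_{\bar j}\bigr).
\]
Integrating by parts against $e^{-f}\dd V_h$ gives
\[
 \int_F(-\Delta_f a)\,\bar b\,\dd\mu=\int_F g^{i\bar j}a_{\bar j}\,\overline{b_{\bar i}}\,\dd\mu=\int_F\langle\bar\partial a,\bar\partial b\rangle_h\,\dd\mu .
\]
So $-\Delta_f$ is a nonnegative self-adjoint elliptic operator on $L^2(\mu)$. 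Its spectrum is discrete, and its kernel consists of the functions with $\bar\partial a=0$, i.e.\ the holomorphic functions, which are the constants because $F$ is compact and connected.

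It suffices to show that every eigenvalue $\lambda>0$ of $-\Delta_f$ satisfies $\lambda\ge1$. Granting this, expand $a-\int_Fa\,\dd\mu$ in an orthonormal eigenbasis (complex eigenfunctions are allowed, since $f$ is real and the operator is real). This gives
\[
 \int_F|a-\textstyle\int_F a\,\dd\mu|^2\dd\mu\le\int_F\langle -\Delta_f a,a\rangle\dd\mu=\int_F|\bar\partial a|_h^2\dd\mu,
\]
which is \eqref{ii:rel:weightedinequality}. For real $a$ one also has $|\bar\partial a|_h^2=|\partial a|_h^2$, which is the form used later.

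For the eigenvalue bound, let $-\Delta_f a=\lambda a$ and consider the $(1,0)$-vector field $V^i=g^{i\bar j}a_{\bar j}$. Note that $\Delta_f a=e^{f}\nabla_i(e^{-f}V^i)$ is its weighted divergence. The key step is the weighted Bochner--Kodaira identity
\[
 \int_F|\Delta_f a|^2\dd\mu=\int_F|\nabla_{\bar k}V^i|^2\dd\mu+\int_F\bigl(R_{i\bar j}+f_{i\bar j}\bigr)V^i\overline{V^j}\,\dd\mu .
\]
I will prove it by integrating $\nabla_i(e^{-f}V^i)$ against $\overline{\nabla_j(e^{-f}V^j)}$ twice by parts. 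The only curvature contribution comes from commuting $\nabla_i$ with $\nabla_{\bar k}$ on $V$, which produces $R_{i\bar j}$. The derivative falling on the weight $e^{-f}$ produces $f_{i\bar j}$.

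With the identity in hand, substitute $R_{i\bar j}+f_{i\bar j}=g_{i\bar j}$ and drop the nonnegative first term:
\[
 \lambda^2\int_F|a|^2\dd\mu\ge\int_F|V|_h^2\dd\mu=\int_F|\bar\partial a|_h^2\dd\mu=\lambda\int_F|a|^2\dd\mu .
\]
Hence $\lambda\ge1$.

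I expect the main obstacle to be the correct derivation of the identity, specifically the signs and index placements under the paper's conventions. Two points need care. First, the weight must enter the divergence exactly so that the Hessian of $f$ combines with $\Ric$ with a plus sign. Second, one must check that the identity uses only $\nabla_{\bar k}V^i$ (the $\bar\partial$-part), so that no torsion or $(2,0)$-Hessian term survives with a bad sign. If the direct computation becomes messy, I would instead verify it pointwise in the form
\[
 \Delta_f|\bar\partial a|^2=|\nabla\bar\partial a|^2+|\bar\nabla\bar\partial a|^2+2\operatorname{Re}\langle\bar\partial\Delta_f a,\bar\partial a\rangle+(\Ric+\ddc f)(\bar\partial a,\bar\partial a)
\]
and then integrate against $\mu$.
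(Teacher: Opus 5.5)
Your proposal is correct. The paper gives no argument of its own here. It quotes Lemma~3.1 of Collins--Sz\'ekelyhidi with $\alpha=0$, and only checks that shifting $f$ by the constant needed to reach the normalization $\int_Fe^{-\widetilde f}\dd V_h=\Vol_h(F)$ changes neither $\ddc f$ nor $\mu$.

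What you write is the standard Futaki-type weighted Bochner proof that underlies that lemma. You run it directly with the probability measure $\mu$, so no normalization step is needed. The citation buys brevity. Your route is self-contained and shows that the constant $1$ comes exactly from the hypothesis $R_{i\bar j}+f_{i\bar j}=g_{i\bar j}$.

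The sign you were worried about comes out as you want. Integrate $\int_F|\nabla_{\bar k}V^i|^2\dd\mu$ by parts in the $\bar k$ slot, then commute using $g^{l\bar k}\nabla_{\bar k}\nabla_l\nabla_i\bar a=\nabla_i\Delta\bar a+R_i{}^{p}\bar a_p$. This gives
\[
 \int_F|\nabla_{\bar k}V^i|^2\dd\mu=\int_F|\Delta_f a|^2\dd\mu-\int_F\bigl(R_{i\bar j}+f_{i\bar j}\bigr)V^i\overline{V^j}\,\dd\mu ,
\]
so only the $(1,1)$-Hessian of $f$ enters and no $(2,0)$ term survives.

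One aside in your proposal is wrong, though harmless. $\Delta_f$ is not a real operator: for real $f$ and $a$, the drift $g^{i\bar j}f_ia_{\bar j}$ has imaginary part proportional to the Poisson bracket of $f$ and $a$. The eigen-expansion only needs the Hermitian self-adjointness of $-\Delta_f$ on complex $L^2(\mu)$, which you verified, so the main proof is unaffected.

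For the same reason, your fallback pointwise formula cannot hold as written. Its left side is complex-valued while its right side is real. Rely on the integrated identity instead.
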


To match the normalization in the cited statement, replace $f$ by
\[
 \widetilde f=f+\log\left(
          \frac{\int_F e^{-f}\dd V_h}{\Vol_h(F)}\right).
\]
Then $\int_F e^{-\widetilde f}\dd V_h=\Vol_h(F)$, while
$\Ric(h)+\ddc\widetilde f=h$ and the probability measure $\mu$
are unchanged. Also $|\bar\partial a|_h=|\partial a|_h$ for real $a$.

\begin{proof}[Completion of the proof of Theorem~\ref{ii:thm:relative}]
Proposition~\ref{ii:prop:terminal} gives the uniform convergence and
$C^{1,1}$ regularity. Propositions~\ref{ii:rel:current},
\ref{ii:rel:potential}, and \ref{ii:rel:fbound} prove the current and potential
assertions.

For the Poincar\'e estimate, set
$\dd\nu_y=V_F^{-1}\dd V_{\widetilde{\omega}_y}$ and
$\dd\mu_y=e^{-f_y}\dd\nu_y$. Both measures have mass one. If $a$ is
a smooth real function on $F_y$, put
$\bar a_y=\int a\,\dd\nu_y$ and $c_y=\int a\,\dd\mu_y$.
The identity
\[
 \int|a-c|^2\dd\nu_y
 =\int|a-\bar a_y|^2\dd\nu_y+|c-\bar a_y|^2
\]
shows that the unweighted mean minimizes the left side over constants.
Using this fact, then Lemma~\ref{ii:rel:weightedpoincare}, we obtain
\begin{align*}
 \int_{F_y}|a-\bar a_y|^2\dd\nu_y
 &\leq\int_{F_y}|a-c_y|^2\dd\nu_y\\
 &\leq e^{\sup f_y}\int_{F_y}|a-c_y|^2\dd\mu_y\\
 &\leq e^{\sup f_y}\int_{F_y}|\bar\partial a|_{\widetilde{\omega}_y}^2\dd\mu_y\\
 &\leq e^{\sup f_y-\inf f_y}
       \int_{F_y}|\partial a|_{\widetilde{\omega}_y}^2\dd\nu_y.
\end{align*}
In the last line we used $|\bar\partial a|=|\partial a|$ for real $a$.
Now
\[
 \omega(t)|_{F_y}=\tau\widetilde{\omega}_y,\qquad
 |\partial a|_{\widetilde{\omega}_y}^2
   =\tau|\partial a|_{\omega(t)|_{F_y}}^2,\qquad
 \frac{\dd V_{\omega(t)|_{F_y}}}
      {\Vol_{\omega(t)|_{F_y}}F_y}
   =\frac{\tau^{n-m}\dd V_{\widetilde{\omega}_y}}{\tau^{n-m}V_F}=\dd\nu_y.
\]
The uniform bound for $f_y$ therefore proves
\eqref{ii:eq:poincareintro} when $T=1$.

If $n-m=1$, the smooth Fano fibre is $\mathbb P^1$.
The intrinsic diameter bound follows from
\cite[Theorem~1.2, equation~(1.10)]{Splitting}.

To restore general $T$, apply the preceding estimates to
$\omega^{(1)}(a)=T^{-1}\omega(Ta)$ for $0\leq a<1$.
Then $\tau=1-a=(T-t)/T$ and
$|\partial a_0|_{\omega^{(1)}|_{F_y}}^2
=T|\partial a_0|_{\omega|_{F_y}}^2$ for a test function $a_0$.
Thus the Poincar\'e factor becomes $\tau T=T-t$, while lengths gain
the factor $\sqrt T$, giving $\sqrt T\sqrt\tau=\sqrt{T-t}$.
\end{proof}

\begin{remark}\label{ii:rem:local-relative}
Theorem~\ref{ii:thm:relative} also holds compact-locally over
$Y^\circ$. Indeed, its convergence and $C^{1,1}$ assertions follow
from Remark~\ref{ii:rem:local-potential}. The current comparison
is local in product coordinates, and the scalar and potential
estimates follow from Remark~\ref{ii:rem:local-scalar}. The
relative gradient argument uses the wells of Lemma~\ref{ii:lem:wells}
with the local extension and uniform application of
\cite[Proposition~5.2]{JST} in Remark~\ref{ii:rem:local-jst}.
The equality of ambient covectors on the central fibre is unchanged. Finally,
the determinant calculation and the weighted Poincar\'e
inequality take place on the entire compact fibre $F_y$, with
uniform bounds for $y$ in any fixed compact subset of $Y^\circ$.
Thus all constants in the theorem may be chosen uniformly on
such a compact subset.
\end{remark}

\section{Limiting geometry and entropy}\label{ii:lim:sec:geometry}

The splitting theorem of \cite{Splitting} leaves a residual
factor whose relation to the original fibre must still be
understood. The ambient diameter bound makes this factor compact;
the intrinsic Poincar\'e inequality shows that it retains the
entire fibre volume. These facts allow us to pass from convergence
on the regular set to convergence of complete anticanonical
section spaces, and hence to a $\Q$-Gorenstein degeneration of
the original fibre. The conclusions are collected in
Proposition~\ref{ii:lim:prop:soliton-static}. An entropy argument
at the end of the section gives the additional uniformity needed
when the base point moves with the scale.

We use the original time variable, with maximal time $T$, and put
$\tau=T-t$, $s=-\log\tau$, and $\widetilde{\omega}=\tau^{-1}\omega(t)$.
Throughout this section we assume only the hypotheses of
Theorem~\ref{ii:thm:main}; the moving-limit criterion explicitly
assumes constancy of the fixed-fibre terminal entropy.

\subsection{Fibre estimates and limiting measures}\label{ii:lim:subsec:geometry}

The collapse of a fibre determines a single terminal conjugate
heat flow, independent of the points chosen on that fibre. Its
dependence on the base is controlled by the horizontal metric
estimate. For the associated split tangents, the principal issue
is preservation of fibre volume. Applying the intrinsic Poincar\'e
inequality to cutoffs on the regular part resolves this issue and
also shows that the horizontal determinant becomes asymptotically
constant in fibrewise $L^2$.

We use the fibre metrics $h_q$, probability measures $\nu_{q,s}$,
volumes $V_F,V$, and quotient notation $H_t$ and $D_q$
from Section~\ref{ii:pre:quotient}. In particular, the determinant
identity is \eqref{ii:lim:eq:schur}, and the complex and real
fibre volumes are given by \eqref{ii:lim:eq:volumes}.

We recall the estimates of
Theorem~\ref{ii:thm:main}, Theorem~\ref{ii:thm:relative},
and the basic bounds in Section~\ref{ii:sec:preliminaries}.
For $t$ sufficiently close to $T$, there are constants independent
of $q$ and $t$ such that
\begin{align}
 c\omega_Y\le H_t\le C\omega_Y,\qquad
 &\omega(t)\ge c\Phi^*\omega_Y,\label{ii:lim:eq:SQ}\\
 \tau|R_{\omega(t)}|\le C,\qquad
 &\diam_{(X,g(t))}F_q\le C\sqrt\tau,\label{ii:lim:eq:scalar-diam}\\
 \Ric(h_q)+\ddc f_q=h_q,\qquad
 &\int_{F_q}e^{-f_q}\,d\nu_{q,s}=1,\qquad |f_q|\le C,\label{ii:lim:eq:fibre-f}\\
 \Var_{\nu_{q,s}}u
 &\le C\int_{F_q}|\partial u|_{h_q}^2\,d\nu_{q,s}.
 \label{ii:lim:eq:fibre-poincare}
\end{align}
Enlarging the constants on a compact earlier time interval gives
these bounds for every $0\le t<T$.
The quotient bounds are recalled from \eqref{ii:eq:SQ} in
Section~\ref{ii:pre:quotient}. The upper bound is used in the
heat-flow comparison, in bounding $D_q$, and in the section-extension norms.
The diameter in \eqref{ii:lim:eq:scalar-diam} uses ambient distances.
The Poincar\'e inequality \eqref{ii:lim:eq:fibre-poincare} uses the
intrinsic fibre gradient and ordinary normalized fibre volume.
This distinction is used in Proposition~\ref{ii:lim:prop:no-loss}.

The local Ricci-potential construction also gives the following estimate.
Choose holomorphic base coordinates
$w=(w^1,\ldots,w^m)$. On a tube over a fixed-radius ball in the magnified
base coordinates
\begin{equation}\label{ii:lim:eq:magnified}
 z=\tau^{-1/2}(w-w(q)),\qquad \widetilde{\omega}=\tau^{-1}\omega(t),
\end{equation}
there is a smooth function $v$ with
\begin{equation}\label{ii:lim:eq:tube-v}
 |v|\le C,\qquad \Ric(\widetilde{\omega})+\ddc v=\widetilde{\omega}.
\end{equation}
The constants are uniform after choosing fixed nested tube radii.
Indeed, the local Ricci potential constructed in
Lemma~\ref{ii:lem:localgauge} differs by a
uniformly bounded function from the normalized relative Ricci
potential. The latter is uniformly bounded on the whole tube by
Proposition~\ref{ii:rel:potential}. These estimates give \eqref{ii:lim:eq:tube-v} on the whole tube,
uniformly for centres in a compact regular product region.

For the real flow $g=2g_\omega$, the Sobolev inequality
\cite[Theorem~D$^*$, equation~(1.15)]{Ye} gives constants $A,B$
independent of $t<T$ such that, with $\chi=n/(n-1)$,
\[
 \left(\int_X|u|^{2\chi}dV_g\right)^{1/\chi}
 \le A\int_X\left(|\nabla u|_g^2+\tfrac14R_gu^2\right)dV_g
       +B\int_Xu^2dV_g.
\]
For $\widetilde g=\tau^{-1}g=2g_{\widetilde{\omega}}$,
\[
 dV_{\widetilde g}=\tau^{-n}dV_g,\qquad
 |\nabla u|_{\widetilde g}^2=\tau|\nabla u|_g^2,
 \qquad R_{\widetilde g}=\tau R_g.
\]
Multiplication of the preceding inequality by $\tau^{-(n-1)}$
therefore replaces $g$ by $\widetilde g$ and $B$ by $B\tau$.
Using $\tau\le T$, the scalar bound in
\eqref{ii:lim:eq:scalar-diam}, and
$dV_{\widetilde g}=2^n\dV_{\widetilde{\omega}}$,
$|\nabla u|_{\widetilde g}^2=|\partial u|_{\widetilde{\omega}}^2$, we obtain
\begin{equation}\label{ii:lim:eq:ambient-sobolev}
 \left(\int_X|u|^{2\chi}\dV_{\widetilde{\omega}}\right)^{1/\chi}
 \le C\int_X\bigl(|\partial u|_{\widetilde{\omega}}^2+|u|^2\bigr)\dV_{\widetilde{\omega}},
 \qquad \chi=\frac{n}{n-1}>1.
\end{equation}
For the local assertion, extend a function supported in a regular
tube by zero; only the scalar bound on its support is used.
Pulled-back cutoffs between fixed magnified base radii have bounded
gradients by the Schwarz bound \eqref{ii:eq:basic}. This supplies the ambient Sobolev and cutoff
bounds used in the section-extension argument.

Since $dt/ds=\tau$, the determinant identity \eqref{ii:lim:eq:schur} gives
\[
 \Ric(\omega(t))|_{F_q}=\Ric(h_q)-\ddc_FD_q,\qquad
 \partial_s h_q=h_q+\partial_t\omega(t)|_{F_q}.
\]
Consequently, the restricted normalized flow is
\begin{equation}\label{ii:lim:eq:forced-flow}
 \partial_s h_q=h_q-\Ric(h_q)+\ddc_FD_q
                   =\ddc_F(f_q+D_q).
\end{equation}
Thus, if $h_q=h_0+\ddc\varphi$, then
$\dot\varphi=f_q+D_q+c(s)$ for a function of time $c$.

We associate a limiting conjugate heat flow to each fibre. The
argument also proves local uniformity over the regular product locus
without imposing an upper metric bound near singular fibres.
Write $d_t=d_{g(t)}$.

\begin{proposition}\label{ii:lim:prop:heat}
For each $q\in Y$, all point conjugate heat kernels with poles
$(x_j,t_j)$, $x_j\in F_q$ and $t_j\nearrow T$, converge on compact
earlier time intervals to the same conjugate heat flow $\mu_t^q$.
There are uniform constants $C,L$ such that
\begin{align}
 \int_Xd_t(x,y)^2\,d\mu_t^q(y)&\le C(T-t)
       &&(x\in F_q),\label{ii:lim:eq:moment}\\
 d_{W_1}^{g(t)}(\mu_t^q,\mu_t^{q'})&\le Ld_{\omega_Y}(q,q').
       \label{ii:lim:eq:W1base}
\end{align}
At each fixed $t<T$, the density of $\mu_t^q$ depends continuously
on $q$ in $C^\infty(X)$. For a fibration with a regular product
locus, these assertions hold locally there, with constants uniform
on compact subsets; in \eqref{ii:lim:eq:W1base} the two points are taken
in a common sufficiently small coordinate ball.
\end{proposition}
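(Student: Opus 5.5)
We will take $\mu^q_t$ as a subsequential limit of conjugate heat kernels, prove \eqref{ii:lim:eq:moment} for every such limit, and then show that all limits coincide and depend nicely on $q$.

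\emph{Existence of limits.} Fix $t<T$. On $[0,t]$ the flow is smooth, so for poles $(x_j,t_j)$ with $t_j\nearrow T$ the kernels $K(x_j,t_j;\cdot,t')$, $t'\le t$, solve the conjugate heat equation with uniformly bounded smooth coefficients. Backward parabolic regularity, applied from time $(t+T)/2$, bounds them in $C^\infty$ on $X\times[0,t]$. Arzel\`a--Ascoli and a diagonal argument in $t$ then give a subsequential limit $\mu_t$, which is a conjugate heat flow of probability measures.

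\emph{Variance and moment bound.} Bamler's variance monotonicity gives $\Var_{g(t)}(\nu_{x,t_j;t})\le H_n(t_j-t)$ for the kernel measure $\nu_{x,t_j;t}$. Hence each kernel concentrates near an $H_n$-center $z_j$ at time $t$, with $d_t(z_j,\cdot)^2$ having mean at most $C(t_j-t)$. We must compare $z_j$ with $x\in F_q$. For this we use the $W_1$-monotonicity
\[
d_{W_1}^{g(t)}(\nu_{a,t_j;t},\nu_{b,t_j;t})\le d_{t_j}(a,b).
\]
For $a,b\in F_q$, Theorem~\ref{ii:thm:main} gives $d_{t_j}(a,b)\le C\sqrt{T-t_j}\to0$. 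So the kernels based at different points of $F_q$ have the same limit, provided the subsequence is common. It remains to locate the center of $\nu_{x_j,t_j;t}$ within $C\sqrt{T-t}$ of $x$ at time $t$.

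The natural choice is the intermediate time $t'=t+(T-t)/2$. Backward from the pole, the kernel at time $t'$ has $H_n$-center within $C\sqrt{T-t'}$ of a point $x'$ with $d_{t_j}(x',F_q)$ small. The plan is to use the Type I scalar bound together with Perelman's and Bamler's pointed-entropy arguments to control how far points of $F_q$ drift in the metric $g(t'')$ for $t''\in[t,T)$. Via the Schwarz lemma, $\omega\ge c\Phi^*\omega_Y$, horizontal displacement costs at least base distance, and Theorem~\ref{ii:thm:main} bounds the fibre diameters. Concretely, we will show that the $H_n$-center of $\nu_{x,T^-;t}$ lies in the tube $\Phi^{-1}(B_{\omega_Y}(q,C\sqrt{T-t}))$. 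This comes from integrating the base function $\Phi^*\rho$ (for a fixed $\rho$ with $|\partial\rho|$ bounded) against the conjugate heat flow: we have $\frac{d}{dt}\int\Phi^*\rho\,d\mu=-\int\Delta\Phi^*\rho\,d\mu$, and $|\Delta\Phi^*\rho|\le C$ by the Schwarz bound. Hence the base barycentre of $\mu_t$ moves by at most $C(T-t)$, and a base-variance version, using $\Box_\omega |w-w(q)|^2\ge -C$, gives spread $C(T-t)$. Combining the base localization with the fibre diameter bound at each time in $[t,t+C(T-t)]$, for which Lemma~\ref{ii:pre:lem:volumes}-type tube estimates suffice, yields \eqref{ii:lim:eq:moment}. \textbf{This step is the main obstacle}: ambient distances at time $t$ inside the tube must be bounded by $C\sqrt{T-t}$. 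We will try first to apply Theorem~\ref{ii:thm:main}'s tube diameter statement, the bound $\diam\Phi^{-1}(B_{g_Y}(z,r))\le C\sqrt{T-t}$ from the proof of Theorem~\ref{ii:thm:main}, together with a Gaussian tail for base distance.

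\emph{Uniqueness, \eqref{ii:lim:eq:W1base}, and continuity.} The limit is independent of the subsequence: the moment bound and $W_1$-monotonicity make $\{\nu_{x_j,t_j;t}\}$ Cauchy. Indeed, for $t_j<t_k$, compare through $\nu_{x_k,t_k;t_j}$, which by the moment bound is $W_1$-close (of order $\sqrt{T-t_j}$) to $\delta_{x_j}$ at time $t_j$. For \eqref{ii:lim:eq:W1base}, join $q,q'$ by a base geodesic lifted along a product-horizontal section. Lemma~\ref{ii:lem:horizontal} gives $d_{t_j}(x,x')\le Ld_{\omega_Y}(q,q')$, and $W_1$-monotonicity passes this to the limit. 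Smooth continuity in $q$ of the density at fixed $t$ follows from the parabolic estimates above together with the weak continuity implied by \eqref{ii:lim:eq:W1base}. The local version uses the same argument with compactly supported horizontal paths and the local forms of the cited estimates.
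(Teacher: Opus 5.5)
Your proposal is correct and follows essentially the paper's route: a base second-moment bound $\int_X\Phi^*\rho_q\,d\mu_t\le C(T-t)$ from heat duality and the Schwarz bound, and Bamler's variance bound to produce a point $p_t$ over the base ball of radius $C\sqrt{T-t}$ about $q$ with $\int_X d_t(p_t,y)^2\,d\mu_t(y)\le C(T-t)$. The paper obtains $p_t$ by averaging $\rho_q\circ\Phi+\int_X d_t(\cdot,y)^2\,d\mu_t(y)$ rather than from an $H_n$-centre, and then finishes with backward $W_1$ contraction for uniqueness and \eqref{ii:lim:eq:W1base} and kernel smoothing for continuity; your Cauchy-sequence and kernel-level variants of these steps are equally valid.

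The step you flag as the main obstacle is resolved by the tool you later use for \eqref{ii:lim:eq:W1base}. The paper lifts the base segment from $x\in F_q$ horizontally, with length at most $L|w(\Phi(p_t))-w(q)|$ by the quotient bound \eqref{ii:eq:SQ}, and then closes up with the ambient fibre-diameter bound of Theorem~\ref{ii:thm:main}. So the intermediate-time drift discussion and the Gaussian tails are unnecessary, and your tube-diameter alternative also works once $\Lambda$ is enlarged.
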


\begin{proof}
Fix nested balls $B_0\Subset B_1\Subset B_2$ in a regular product
chart, with $q\in B_0$. Splicing $|w-w(q)|^2$ to a fixed positive
constant gives smooth nonnegative functions $\rho_q$, constant
outside $B_2$, such that
\[
 \rho_q=|w-w(q)|^2\ \text{on }B_1,\qquad
 \inf_{Y\setminus B_1}\rho_q\ge c_0>0,\qquad
 \|\rho_q\|_{C^2(B_2)}\le C_0.
\]
The constants are uniform because $B_0\Subset B_1$. On a singular
base, constancy off the regular chart makes $\Phi^*\rho_q$ smooth
on $X$, with derivatives supported in a compact regular region.
Holomorphicity and the Schwarz bound \eqref{ii:eq:basic} give $|\Delta_{g(t)}\Phi^*\rho_q|\le C_1$.
For $\nu_{j,t}=\nu_{x_j,t_j;t}$, heat duality therefore gives
\[
 \int_X\Phi^*\rho_q\,d\nu_{j,t}
 =\int_t^{t_j}\!\int_X\Delta_{g(a)}\Phi^*\rho_q\,d\nu_{j,a}\,da
 \le C_1(t_j-t).
\]
Backward heat smoothing gives subsequential convergence on compact
earlier time intervals to smooth positive probability densities.
Every limiting flow $\mu$ satisfies
\begin{equation}\label{ii:lim:eq:basemoment}
 \int_X\Phi^*\rho_q\,d\mu_t\le C_1(T-t).
\end{equation}
Passing the variance bound \cite[Corollary~3.8]{BamlerHeat} to the
limit at fixed $t<T$ also gives
\[
 \iint d_t(p,y)^2\,d\mu_t(p)d\mu_t(y)\le H_{2n}(T-t).
\]
Averaging the sum of these integrands yields, for $\tau=T-t$, a
point $p_t$ with
\[
 \rho_q(\Phi(p_t))+\int_Xd_t(p_t,y)^2\,d\mu_t(y)\le C\tau.
\]
For small $\tau$, this forces $\Phi(p_t)\in B_1$ and
$|w(\Phi(p_t))-w(q)|\le C\sqrt\tau$. The connecting straight
segment lies in $B_1$. By the quotient upper bound \eqref{ii:eq:SQ}, its horizontal lift from any
$x\in F_q$ has length at most $L|w(\Phi(p_t))-w(q)|$; the lift
exists throughout because it stays over a compact regular region.
Its endpoint and $p_t$ lie in the same fibre, so the ambient
fibre-diameter bound gives
\begin{equation}\label{ii:lim:eq:distance-est}
 d_t(x,p_t)\le L|w(\Phi(p_t))-w(q)|+D\sqrt\tau\le C\sqrt\tau.
\end{equation}
Hence
\[
 \int_Xd_t(x,y)^2\,d\mu_t(y)
 \le2d_t(x,p_t)^2+2\int_Xd_t(p_t,y)^2\,d\mu_t(y)\le C\tau.
\]
This proves \eqref{ii:lim:eq:moment} for the whole measure, including
its mass outside the chart. Smoothness and the diameter bound on a
compact earlier time interval cover the remaining times. In particular,
$d_{W_1}^{g(t)}(\delta_x,\mu_t)\le C\sqrt{T-t}$.

For two subsequential limits $\mu,\widetilde\mu$, backward
contraction \cite[Proposition~4.17]{BamlerStructure} and this bound
give, whenever $t<a<T$ and $x\in F_q$,
\[
 d_{W_1}^{g(t)}(\mu_t,\widetilde\mu_t)
 \le d_{W_1}^{g(a)}(\mu_a,\delta_x)
       +d_{W_1}^{g(a)}(\delta_x,\widetilde\mu_a)
 \le2C\sqrt{T-a}.
\]
Letting $a\nearrow T$ proves uniqueness, hence full convergence;
write the limit as $\mu^q$.

For nearby $q,q'\in B_0$, the same horizontal lift and fibre bound
give $d_a(x,x')\le L|w(q)-w(q')|+D\sqrt{T-a}$ for
$x\in F_q$, $x'\in F_{q'}$. Choose $x\in F_q$ and $x'\in F_{q'}$. By \eqref{ii:eq:SQ} and the fibre-diameter
bound, $d_a(x,x')\le L|w(q)-w(q')|+D\sqrt{T-a}$.
Backward contraction, the triangle inequality, and Cauchy--Schwarz give
\[
\begin{aligned}
d_{W_1}^{g(t)}(\mu_t^q,\mu_t^{q'})
&\le d_{W_1}^{g(a)}(\mu_a^q,\mu_a^{q'}) \le d_{W_1}^{g(a)}(\mu_a^q,\delta_x)+d_a(x,x')
   +d_{W_1}^{g(a)}(\delta_{x'},\mu_a^{q'})\\
&\le \left(\int_X d_a(x,y)^2\,d\mu_a^q(y)\right)^{1/2}
   +d_a(x,x') +\left(\int_X d_a(x',y)^2\,d\mu_a^{q'}(y)\right)^{1/2}\\
&\le L|w(q)-w(q')|+(2C+D)\sqrt{T-a}.
\end{aligned}
\]
Letting $a\nearrow T$ proves the claim.
Let $a\nearrow T$ and compare coordinate and base distances.
For a global bundle, a finite atlas and subdivision of a minimizing
base geodesic prove \eqref{ii:lim:eq:W1base} for every pair.

Finally, write $d\mu_t^q=v_t^q\dV_{g(t)}$. For fixed $t<a<T$,
heat duality gives
\[
 v_t^q(x)=\int_XK(y,a;x,t)\,d\mu_a^q(y).
\]
Every $x$-derivative of the smooth kernel is uniformly Lipschitz in
$y$ on the compact product. Thus, for every integer $b\ge0$,
\[
 \|v_t^q-v_t^{q'}\|_{C^b(X)}
 \le C_{b,t,a}d_{W_1}^{g(a)}(\mu_a^q,\mu_a^{q'})
 \le C_{b,t,a}L d_{\omega_Y}(q,q').
\]
All chart constants are uniform on compact regular subsets, proving
the stated continuity and the local version.
\end{proof}

Equation~\eqref{ii:lim:eq:moment} implies
$\Var_{g(t)}(\mu_t^q)\le4C(T-t)$, so $\mu^q$ is a limiting point
in the first-singular-time construction of
\cite[Definition~2.32]{BamlerStructure}.
Its tangents exist along every sequence of scales after taking a
subsequence, and are shrinking solitons by
\cite[Theorem~2.37]{BamlerStructure}.

Fix $q$. For $\tau_i\searrow0$ use
\begin{equation}\label{ii:lim:eq:rescaling}
 g_i(a)=\tau_i^{-1}g(T+\tau_i a),\qquad a<0,
 \quad \mu_{i,a}=\mu_{T+\tau_i a}^q.
\end{equation}
Write $g_S$ for the residual time-$-1$ metric below and $\omega_S$
for its complex form, so $g_S=2g_{\omega_S}$.

By \cite[Proposition~2.3, Corollary~3.1 and Theorem~3.3]{Splitting},
a tangent flow at a fixed limiting point above $q$ has
time-$-1$ slice
\begin{equation}\label{ii:lim:eq:split}
 Z=\C^m\times S
\end{equation}
as a K\"ahler metric and normal complex space. The residual factor
$S$ is connected, complete, locally compact and klt. Its regular
locus is dense, and its distance is the completion of the regular
length metric. The soliton potential is locally Lipschitz on $S$.
Regular convergence includes smooth convergence of the metrics
and complex structures. By
\cite[Proposition~3.7]{Splitting}, after a constant complex-linear
change of base coordinates, the normalized base maps converge
smoothly on regular charts to $\operatorname{pr}_{\C^m}$.

By \cite[Theorems~2.5 and~2.14]{BamlerStructure}, the
convergence is smooth on the full regular spacetime. Thus the
smooth-convergence locus in \cite[Theorem~9.31]{BamlerCompactness}
equals the regular locus.
Fix the regular convergence embeddings on an exhaustion, as in
\cite[Theorem~9.31]{BamlerCompactness}. For
$U\Subset U'\Subset\Reg S$, the pulled-back normalized base maps
on $B_\eps(0)\times U'$ converge smoothly to $(z,y)\mapsto z$.
For all large $i$, their zero sets near
$\{0\}\times\overline U$ are unique graphs $z=z_i(y)$ with
$z_i\to0$ smoothly. Their images in $X$ lie in $F_q$; we call
these images \emph{fibre graphs}. They are embedded and
single-sheeted, and their induced metrics and complex structures
converge smoothly. Using the same convergence embedding on nested
domains makes the graphs compatible by the implicit function theorem.

The curvature-radius estimates and singular cutoffs in
\cite[Proposition~3.2 and Lemma~3.1]{Splitting} imply zero weighted two-capacity of the
singular set. Once $S$ is compact, they give cutoffs
\begin{equation}\label{ii:lim:eq:capacity}
 \chi_j\in C_c^\infty(\Reg S),\quad 0\le\chi_j\le1,\quad
 \chi_j\to1\text{ locally on }\Reg S,\quad
 \int_{\Reg S}|\nabla\chi_j|^2\dV_S\longrightarrow0.
\end{equation}
We may choose $\chi_j=1$ on an increasing compact exhaustion of
$\Reg S$.
The capped spatial curvature radius of a flat
product equals that of its residual factor. Integrating the product
radius estimate over a fixed Euclidean ball gives the residual
estimate. Compactness bounds the locally Lipschitz residual
potential, so weighted and ordinary energies are comparable.
The cutoff construction of \cite[Lemma~3.1]{Splitting} then gives \eqref{ii:lim:eq:capacity}.

The same spectral argument applies to a moving sequence whose limit
is a connected shrinking soliton; this soliton structure is established
in Proposition~\ref{ii:lim:prop:dini}. The complex and algebraic regularity
follows from Hallgren--Zhang \cite[Theorems~A and~B]{HZ}.

\begin{lemma}\label{ii:lim:lem:compact}
The residual factor $S$ is compact. Its time-$-1$ regular volume
$V_S$ satisfies $0<V_S\le V$.
\end{lemma}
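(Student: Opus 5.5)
We will prove compactness from the ambient diameter bound, the upper bound $V_S\le V$ from lower semicontinuity of volume on the regular part, and positivity from volume noncollapsing of the tangent flow.

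\emph{Step 1: compactness.} Fix a point $x_\infty\in\{0\}\times S$ and a regular point $y_\infty\in\{0\}\times\Reg S$ at bounded distance, realized through the convergence embeddings by points $y_i\in X$. By the fibre-graph construction, $y_i$ can be chosen on $F_q$. Theorem~\ref{ii:thm:main} gives $\diam_{(X,g(t))}F_q\le C\sqrt{T-t}$, so in the rescaled metrics $g_i(-1)$ the whole fibre has diameter at most $C$. The projection to $\C^m$ is the limit of the normalized base maps (\cite[Proposition~3.7]{Splitting}). Hence a point $(0,p)\in\{0\}\times\Reg S$ is a limit of points on fibre graphs, and so of points of $F_q$. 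Every such point therefore lies within distance $C$ of $(0,y_\infty)$. Since $\Reg S$ is dense and the distance on $S$ is the completion of the regular length metric, $\diam S\le C$. A complete, locally compact, bounded length space is compact by Hopf--Rinow, so $S$ is compact.

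\emph{Step 2: $V_S\le V$.} Take a compact $K\subset\Reg S$. For large $i$, the fibre graph over $K$ is an embedded, single-sheeted piece of $F_q$. Its induced metrics and complex structures converge smoothly to those of $(K,g_S)$. Because of the rescaling by $\tau_i^{-1}$, its induced volume is bounded by the normalized real fibre volume $V$ from \eqref{ii:lim:eq:volumes}; here the conventions $g=2g_\omega$ and the time-$-1$ normalization must be checked. Passing to the limit gives $\Vol_{g_S}(K)\le V$, and exhausting $\Reg S$ gives $V_S\le V$.

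\emph{Step 3: $V_S>0$.} Use Bamler's noncollapsing for tangent flows: the regular set of the limit is nonempty and open in the slice $\C^m\times S$, and the regular part of $Z$ has positive $2n$-dimensional measure. Since $Z$ is a metric product, $\Reg S$ is a nonempty open subset of a smooth manifold of real dimension $2(n-m)$. Therefore $V_S>0$.

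The main point to watch is Step 1. The regular points on $\{0\}\times S$ must be reached by points of $F_q$ itself and not merely of nearby fibres; this is exactly what the zero-set graph description supplies. Without it one would only obtain points within $o(1)$ base distance, which the Lipschitz horizontal lift via \eqref{ii:eq:SQ} also handles. Step 2 needs only that fibre graphs are single-sheeted, so no overcounting occurs.
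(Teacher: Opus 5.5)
Your Steps 2 and 3 are the paper's argument: single-sheeted fibre graphs over compact regular domains plus exhaustion give $V_S\le V$, and a nonempty open regular set gives $V_S>0$. The outline of Step 1 (ambient fibre-diameter bound, fibre graphs, density of $\Reg S$, Hopf--Rinow) is also the paper's.

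\textbf{The gap in Step 1.} The problem is the sentence ``Every such point therefore lies within distance $C$ of $(0,y_\infty)$.'' Let $(0,p),(0,p')$ be regular limits of points $y_i,y_i'\in F_q$. You need
\[
d_S(p,p')\le\liminf_i d_{g_i(-1)}(y_i,y_i').
\]
Smooth convergence on the regular part gives only the opposite inequality. Pushing a short curve in $\C^m\times\Reg S$ forward by the convergence embeddings yields $\limsup_i d_{g_i(-1)}(y_i,y_i')\le d_S(p,p')$. A minimizing $g_i(-1)$-geodesic from $y_i$ to $y_i'$ of length at most $C$ may leave every region captured by the embeddings, so it tells you nothing about $d_S$.

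In $\Fconv$-convergence, distances between images of regular points are guaranteed to converge only at times where the convergence is time-wise \cite[Theorem~9.31(d)]{BamlerCompactness}. Such times form a full-measure set after passing to a subsequence \cite[Lemma~6.7]{BamlerCompactness}, but time $-1$ is not a priori one of them.

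\textbf{How the paper closes it.} The paper works at a time-wise time $a_0<0$. Fibre graphs at that time and the ambient bound $\diam_{g(T+\tau_i a_0)}F_q\le D\sqrt{\tau_i(-a_0)}$ give $d_{S_{a_0}}(y,y')=\lim_i d_{g_i(a_0)}(y_i,y_i')\le D\sqrt{-a_0}$ for regular $y,y'$. Density extends this to the completion, and exact self-similarity of the shrinking soliton then gives $\diam S_{-1}\le D$. With that insertion your Step 1 goes through.

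The point you flagged as the main issue, reaching regular points through $F_q$ itself rather than nearby fibres, is already handled by the fibre graphs. It is not where the difficulty lies.
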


\begin{proof}
Choose one negative time $a_0$ at which the $\Fconv$ convergence
is time-wise. Such times form a set of full measure after passing to a subsequence
\cite[Lemma~6.7]{BamlerCompactness}.
For $y,y'\in\Reg S_{a_0}$, fibre graphs give points
$y_i,y_i'\in F_q$ converging regularly to $(0,y),(0,y')$.
At a time-wise slice the regular convergence embeddings converge
uniformly on compact sets in the common slice metric
\cite[Theorem~9.31(d)]{BamlerCompactness}. Consequently
\[
 d_{S_{a_0}}(y,y')
 =\lim_i d_{g_i(a_0)}(y_i,y_i')\le D\sqrt{-a_0}.
\]
Density extends this bound to the residual completion. Exact
self-similarity gives $\diam S_{-1}\le D$.
The residual is a complete locally compact length space, hence
proper. Since it is bounded, it is compact.

At time $-1$, smooth fibre graphs over compact regular
domains give $V_S\le V$ by exhaustion. A nonempty regular domain
has positive volume, so $V_S>0$.
\end{proof}

\begin{proposition}\label{ii:lim:prop:no-loss}
Every residual factor of a fixed-fibre tangent satisfies $V_S=V$.
The same identity holds for a moving sequence whose limit is a
split shrinking soliton. Moreover, for the
horizontal determinant term of Subsection~\ref{ii:lim:subsec:geometry},
\begin{equation}\label{ii:lim:eq:Dvariance}
 \Var_{\nu_{q,s}}D_q(s)\longrightarrow0\qquad(s\to\infty).
\end{equation}
\end{proposition}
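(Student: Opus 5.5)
We argue by contradiction: we use the intrinsic fibre Poincar\'e inequality \eqref{ii:lim:eq:fibre-poincare} to show that the fibre mass cannot escape to the singular set or to infinity. Lemma~\ref{ii:lim:lem:compact} already gives $V_S\le V$, so only the lower bound $V_S\ge V$ needs proof. Fix cutoffs $\chi_j$ as in \eqref{ii:lim:eq:capacity}, equal to one on a compact exhaustion $K_j\Subset\Reg S$. Through the fibre graphs over $K_j$ (and a fixed neighbourhood of $\supp\chi_j$) we transplant $\chi_j$ to functions $u_{i,j}$ on $F_q$, extended by zero off the graph. Because the graphs are embedded, single-sheeted, and their induced metrics converge smoothly to $\omega_S$, we get
\[
\int_{F_q}u_{i,j}\,d\nu_{q,s_i}\to V^{-1}\int\chi_j\dV_S,\qquad
\int_{F_q}u_{i,j}^2\,d\nu_{q,s_i}\to V^{-1}\int\chi_j^2\dV_S,
\]
and $\int|\partial u_{i,j}|^2_{h_q}d\nu\to V^{-1}\int|\nabla\chi_j|^2\dV_S$. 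The time $-1$ normalized fibre metric agrees with $h_q$ at $s_i=-\log\tau_i$, and the real/complex factor $2^{n-m}$ is absorbed in $V$ versus $V_F$.

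Write $a_j=V^{-1}\int\chi_j\dV_S\le V_S/V$. Applying \eqref{ii:lim:eq:fibre-poincare} to $u_{i,j}$ and letting $i\to\infty$ gives
\[
V^{-1}\int\chi_j^2\dV_S-a_j^2\le C\,V^{-1}\int|\nabla\chi_j|^2\dV_S.
\]
Now let $j\to\infty$. By \eqref{ii:lim:eq:capacity} and monotone convergence, $x-x^2\le0$ with $x=V_S/V\in(0,1]$ (positivity from Lemma~\ref{ii:lim:lem:compact}), so $x=1$. The moving case is identical once Proposition~\ref{ii:lim:prop:dini} provides the split soliton structure and the cutoffs; the fibre graphs are constructed in the same way around moving base points.

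For \eqref{ii:lim:eq:Dvariance}, we argue by contradiction along a sequence $s_i\to\infty$ with $\Var D_q(s_i)\ge\eps$. Passing to a tangent at $q$ with $\tau_i=e^{-s_i}$, the no-loss statement shows that the fibre graphs over $K_j$ carry all but $o_j(1)$ of the $\nu$-mass, uniformly in large $i$. On the graphs, $D_q$ is the log of the ratio of $\det H$ to $e^{ms}\det\omega_Y$. By smooth convergence of the ambient metrics and of the normalized base maps to $\operatorname{pr}_{\C^m}$ (for which the quotient metric of the product $g_{\mathrm{Euc}}+g_S$ is constant), this ratio converges locally uniformly on $\Reg S$ to a constant, since $w(q)$ is fixed. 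Combined with $|D|\le C$ from Lemma~\ref{ii:pre:lem:quotient} on the small-mass complement, this makes the variance $o(1)$, a contradiction.

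The main obstacle is justifying that the transplanted cutoffs are genuine Lipschitz functions on all of $F_q$. Away from the graph they are set to zero, and one must check that no other sheets of $F_q$ enter the convergence region over $\supp\chi_j$; otherwise the gradient integral would pick up extra contributions. We handle this with the uniqueness of the zero-set graphs near $\{0\}\times\overline U$ in the regular convergence embeddings, which places every point of $F_q$ near $\{0\}\times U'$ on the single graph.
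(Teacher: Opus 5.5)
Your proposal is correct and follows the paper's proof essentially step for step. You transplant the capacity cutoffs through the single-sheeted fibre graphs and pass the fibre Poincar\'e inequality to the limit to get $b-b^2\le0$ with $0<b\le1$; for the variance you use the convergence of the horizontal cometric $\langle dz_i^\alpha,dz_i^\beta\rangle_{\widetilde{\omega}_i}=(H_{t_i}^{-1})^{\alpha\bar\beta}$ to a constant on regular graphs, together with $|D_q|\le C$ and no loss of volume, exactly as the paper does (the paper phrases the constancy as $\nabla dz_\infty=0$ and bounds $\Var_{\nu}D_q$ by $\int(D_q-c)^2\,d\nu$).
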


\begin{proof}
Use \eqref{ii:lim:eq:capacity}. For fixed $j$, transport $\chi_j$ to
$F_q$ through its fibre graph in $(X,g_i(-1))$, and extend by
zero outside a slightly larger regular graph domain. Denote the
result by $\chi_{j,i}$. Its compactly supported extension is smooth.
Smooth convergence gives convergence of its first and second
integrals and its Dirichlet energy. The fibre Poincar\'e inequality
therefore gives, as $i\to\infty$,
\[
 \frac1V\int_{\Reg S}\chi_j^2\dV_S
 -\left(\frac1V\int_{\Reg S}\chi_j\dV_S\right)^2
 \le\frac CV\int_{\Reg S}|\nabla\chi_j|^2\dV_S.
\]
Now let $j\to\infty$. If $b=V_S/V$, then
\[
 b-b^2\le0,\qquad 0<b\le1.
\]
Hence $b=1$. In particular, compact regular graph domains capture
all but an arbitrarily small amount of the ordinary fibre volume.

Put $t_i=T-\tau_i$, $s_i=-\log\tau_i$, and
\[
 \widetilde{\omega}_i=\tau_i^{-1}\omega(t_i),\qquad
 z_i=\tau_i^{-1/2}(w-w(q)).
\]
Using the Hermitian cometric and the block-inverse identity
\eqref{ii:lim:eq:schur}, we obtain
\[
\begin{aligned}
 G_i^{\alpha\bar\beta} :=\langle dz_i^\alpha,dz_i^\beta\rangle_{\widetilde{\omega}_i} =\tau_i\tau_i^{-1}
   \langle dw^\alpha,dw^\beta\rangle_{\omega(t_i)}
 =(H_{t_i}^{-1})^{\alpha\bar\beta}.
\end{aligned}
\]
Smooth convergence of the metrics and base maps gives
\[
 G_i\longrightarrow
 G_\infty
 :=\bigl(\langle dz_\infty^\alpha,dz_\infty^\beta
                         \rangle_{\widetilde{\omega}_\infty}\bigr).
\]
Since $\nabla dz_\infty^\alpha=0$, we have $dG_\infty=0$.
Thus $G_\infty$ is a constant positive-definite matrix on
$\C^m\times\Reg S$. On each compact regular fibre graph
$\iota_i:U\to F_q$,
\[
\begin{aligned}
 D_q(s_i)\circ\iota_i
 &=-\log\det(G_i\circ\iota_i)-\log\det\omega_Y(q)\\
 &\longrightarrow
 c:=-\log\det G_\infty-\log\det\omega_Y(q)
 \qquad\text{in }C^\infty(U).
\end{aligned}
\]
The constant $c$ is the same for all such graphs along this subsequence. By \eqref{ii:lim:eq:SQ}, $|D_q(s_i)|\le C$.
Volume preservation and exhaustion then imply, for each finite $p$,
\begin{equation}\label{ii:lim:eq:D-Lp}
 \int_{F_q}|D_q(s_i)-c|^p\,d\nu_{q,s_i}\longrightarrow0.
\end{equation}
Indeed, choose a smoothly bounded domain $U\Subset\Reg S$
with $\Vol(S\setminus U)<\varepsilon V$, and let $U_i$ be its fibre
graph. Smooth convergence and $V_S=V$ give
\[
 \nu_{q,s_i}(U_i)\longrightarrow\frac{\Vol(U)}V,
 \qquad
 \limsup_i\nu_{q,s_i}(F_q\setminus U_i)\le\varepsilon.
\]
After increasing the common bound so that $|c|\le C$, we obtain
\[
 \begin{aligned}
 \int_{F_q}|D_q(s_i)-c|^p\,d\nu_{q,s_i}
 &\le\sup_{U_i}|D_q(s_i)-c|^p
       +(2C)^p\nu_{q,s_i}(F_q\setminus U_i),\\
 \limsup_i\int_{F_q}|D_q(s_i)-c|^p\,d\nu_{q,s_i}
 &\le(2C)^p\varepsilon.
 \end{aligned}
\]
First let $i\to\infty$ and then $\varepsilon\searrow0$. Finally,
for any probability measure $\nu$ and real constant $c$,
\[
 \Var_\nu D
 =\int(D-c)^2\,d\nu-\left(\int(D-c)\,d\nu\right)^2
 \le\int(D-c)^2\,d\nu.
\]
Thus \eqref{ii:lim:eq:D-Lp} with $p=2$ controls the variance even though
the limiting constant $c$ may depend on the chosen subsequence.

Every sequence $s_i\to\infty$ has a splitting subsequence to which
this proof applies. If \eqref{ii:lim:eq:Dvariance} failed, a sequence with
variance bounded below would contradict \eqref{ii:lim:eq:D-Lp} with $p=2$.
\end{proof}

The compactness and volume arguments apply to moving sequences
once their limits are split shrinking solitons: the ambient
diameter bound and the fibre Poincar\'e inequality are uniform in
$q$, and the fibre graphs have the same regular convergence.

We give the argument over the regular product locus $Y^\circ$.
All smooth fibres under consideration are biholomorphic to the same
Fano manifold $F$. For uniform statements choose concentric coordinate
balls and a larger product chart with
\[
 K_0=\overline{B_0}\Subset B_1\Subset Y^\circ,
 \qquad \Phi^{-1}(B_1)\cong B_1\times F.
\]
The closed ball $K_0$ is compact and connected. Constants below may
depend on the larger chart and its buffer. The local estimates and
limiting heat measures established above apply
uniformly to centres in $K_0$. Every compact subset of $Y^\circ$ is
covered by finitely many such closed balls. Thus the conclusions
proved uniformly on $K_0$ imply the stated compact-local conclusions;
for a Fano bundle over a compact smooth base they are global.

\subsection{Anticanonical sections and polarized limits}\label{ii:lim:subsec:sections}

A compact residual factor carries an anticanonical polarization
by \cite{HZ}. To relate it to the original fibre, however, one
must show that the complete spaces of sections converge, including
across the singular set. Extension to ambient tubes gives the
uniform section bounds through the estimates of \cite{JST}, and
$\bar\partial$ estimates provide approximation in the opposite
direction. Preservation of volume then keeps orthonormal sections
independent in the limit and determines the central fibre of the
resulting degeneration.

\begin{lemma}\label{ii:lim:lem:qfano}
Every compact residual $S$ of a split shrinking limit of the rescaled flow is a
projective klt $\Q$-Fano variety. Its soliton current belongs to $2\pi c_1(S)$,
has bounded local potentials, and has full anticanonical mass.
In particular,
\begin{equation}\label{ii:lim:eq:degree}
 (-K_S)^{n-m}=(-K_F)^{n-m}.
\end{equation}
\end{lemma}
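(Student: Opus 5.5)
The plan has three parts. First, obtain the algebraic structure of $S$ from Hallgren--Zhang. Then identify the soliton current with the anticanonical class and show it has bounded potentials. Finally, compute the degree from the volume identity $V_S=V$ of Proposition~\ref{ii:lim:prop:no-loss}.

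\emph{Algebraic structure.} By \cite[Theorems~A and~B]{HZ}, the time-$-1$ slice of a tangent (or moving-sequence) shrinking soliton limit of a K\"ahler--Ricci flow with Type~I scalar bound is a normal complex space with klt singularities. On its regular part the soliton equation $\Ric(\omega_S)+\ddc f_S=\omega_S$ holds, and $\omega_S$ is the curvature of a limiting Hermitian metric on the anticanonical bundle obtained by partial $C^0$ estimates. Splitting \eqref{ii:lim:eq:split} off the flat factor $\C^m$ reduces everything to the compact residual $S$ given by Lemma~\ref{ii:lim:lem:compact}. The limiting $L^2$ sections of $-rK_S$, produced by the partial $C^0$ estimate of \cite[Section~8]{JST} applied on tubes of magnified radius, give a base-point-free embedding. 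Hence $-K_S$ is $\Q$-Cartier and ample, and $S$ is projective klt $\Q$-Fano. The soliton current is the curvature of the limiting metric on $-rK_S$ divided by $r$, so it lies in $2\pi c_1(S)$.

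\emph{Bounded potentials.} Locally, $\omega_S=\ddc(-\log|\sigma|^2_{h}/r)$ for a local generator $\sigma$. Its potential is bounded because the partial $C^0$ estimate gives a two-sided bound on the Bergman density $\rho_r$ of the limit; equivalently, the uniform bound \eqref{ii:lim:eq:tube-v} on the Ricci potential passes to the limit. Bounded local potentials imply, by Bedford--Taylor, that the non-pluripolar Monge--Amp\`ere mass equals the cohomological mass. Since the current puts no mass on the pluripolar singular set,
\[
 \int_{\Reg S}\omega_S^{n-m}=(2\pi)^{n-m}(-K_S)^{n-m}.
\]
This is the full-mass assertion.

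\emph{Degree.} The left-hand side equals $(n-m)!\,V_S$ up to the real/complex factor $2^{n-m}$ fixed in \eqref{ii:lim:eq:volumes}. Proposition~\ref{ii:lim:prop:no-loss} gives $V_S=V$, and \eqref{ii:lim:eq:volumes} gives $V=2^{n-m}(2\pi)^{n-m}(-K_F)^{n-m}/(n-m)!$. Comparing the two expressions yields \eqref{ii:lim:eq:degree}.

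The main obstacle is the bounded-potential step. A uniform lower bound on the Bergman kernel of $-rK_S$ across the singular set requires the ambient section-extension argument. This uses the Sobolev inequality \eqref{ii:lim:eq:ambient-sobolev} and Moser iteration for $\bar\partial$-estimates on tubes with the bounded weight $v$ from \eqref{ii:lim:eq:tube-v}, with constants uniform as $i\to\infty$. I would first try to cite \cite{HZ} directly for the local boundedness of the potential, and fall back on the JST tube estimates if needed.
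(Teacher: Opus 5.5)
Your second and third steps are essentially the paper's. The class $2\pi c_1(S)$ and the bounded local potentials come from the bounded continuous anticanonical weights of \cite[Section~2.3 and Proposition~4.6]{HZ}, which is your stated first choice. Full mass follows from Bedford--Taylor; the paper makes this precise on a resolution $\rho:\widetilde S\to S$, with $\rho^*\omega_S=\theta+\ddc\varphi$ and $\varphi\in L^\infty$. The degree identity then follows from $V_S=V$ with the same volume constants.

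The gap is in your first step. A partial $C^0$ estimate is a lower Bergman-density bound, i.e.\ base-point-freeness. In the limit it gives, at best, a holomorphic map $\Phi_\infty:S\to\PP^N$ with $\Phi_\infty^*\mathcal O(1)\cong\mathcal O(-rK_S)$ by reflexivity on normal $S$. That makes $-K_S$ $\Q$-Cartier, big and semiample. It does not make $\Phi_\infty$ an embedding, or even finite. That would require separating points and tangent directions on $S$, a Donaldson--Sun type argument that you assert but do not supply. Positivity of $\omega_S$ on $\Reg S$ rules out contracted curves meeting the regular locus, but not a curve $C\subset\mathrm{Sing}\,S$ with $-K_S\cdot C=0$. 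So as written you only obtain a weak Fano, not a $\Q$-Fano variety.

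The missing idea is to use the full strength of \cite[Theorem~A]{HZ}, which you already cite. It gives a polarized Fano fibration $p:Z\to A$ with $A$ affine and $-K_Z$ relatively ample and $\Q$-Cartier. Holomorphic functions on the compact connected normal space $S$ are constant, so $p$ is constant on $\{0\}\times S$. Hence the slice lies in a projective fibre of $p$, and it is algebraic by \cite{Chow}. Restricting an ample Cartier multiple of $-K_Z$ and using $K_Z|_{\{0\}\times S}=K_S$ gives ampleness of $-rK_S$ directly. The paper takes klt from \cite[Theorem~3.3]{Splitting}.

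Separately, your fallback ``equivalently, \eqref{ii:lim:eq:tube-v} passes to the limit'' is not equivalent. Take a local generator $\sigma$ of $-rK_S$ near a singular point, with adapted volume form $\mu_\sigma$. The local potential of $\omega_S$ is then $v_\infty-\log\bigl(\omega_S^{n-m}/\mu_\sigma\bigr)$ up to constants. A bounded Ricci potential therefore must be supplemented by a two-sided comparison of $\omega_S^{n-m}$ with $\mu_\sigma$, which is exactly the weight bound supplied by \cite{HZ}.
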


\begin{proof}
Hallgren--Zhang \cite[Theorem~A and Definition~7.1]{HZ} give a
polarized Fano fibration $p:Z\to A$, with $A$ affine and
$-K_Z$ relatively ample and $\Q$-Cartier. Holomorphic functions
on compact connected normal $S$ are constant, so the slice
$\{0\}\times S$ lies in a projective fibre of $p$. It is algebraic
by Chow's theorem \cite{Chow}. The product identification gives
$K_Z|_{\{0\}\times S}=K_S$; hence an ample Cartier multiple
of $-K_Z$ restricts to an ample multiple of $-K_S$.
Since $S$ is klt by \cite[Theorem~3.3]{Splitting}, it is $\Q$-Fano.

The bounded continuous anticanonical weights of
\cite[Section~2.3 and Proposition~4.6]{HZ} restrict to the slice
and give $[\omega_S]=2\pi c_1(S)$. On a resolution
$\rho:\widetilde S\to S$, write
\[
 \rho^*\omega_S=\theta+\ddc\varphi,\qquad
 [\theta]=2\pi c_1(\rho^*(-K_S)),\qquad \varphi\in L^\infty.
\]
The Bedford--Taylor product has no mass on the exceptional set;
its telescoping identity and Stokes' theorem \cite{BedfordTaylor} give
\[
 \int_{\Reg S}\omega_S^{n-m}
 =\int_{\widetilde S}(\theta+\ddc\varphi)^{n-m}
 =\int_{\widetilde S}\theta^{n-m}=(2\pi)^{n-m}(-K_S)^{n-m}.
\]
Thus $V_S=(4\pi)^{n-m}(-K_S)^{n-m}/(n-m)!$, and $V_S=V$ proves
\eqref{ii:lim:eq:degree}. Finally, the regular soliton equation
is the Monge--Amp\`ere equation for these weights. Its right-hand
side is the anticanonical volume measure times a bounded
exponential factor, and is locally integrable by the klt
condition. Both sides have zero mass on the singular set, so
the equation extends to all of $S$ as the weak soliton equation.
\end{proof}

Fix a sequence $(q_i,t_i)$ with $\tau_i=T-t_i\to0$ that admits a
split shrinking limit. In a product chart, use the normalized tube
$\Omega_i=\Phi^{-1}(|z|<R_0)$, where
$z=(w-w(q_i))/\sqrt{\tau_i}$ and $R_0$ is a fixed small number.
Let $\widetilde{\omega}_i=\tau_i^{-1}\omega(t_i)$ and
$\eta_i=\widetilde{\omega}_i|_{F_{q_i}}$. On a slightly larger tube choose $v_i$
as in \eqref{ii:lim:eq:tube-v}. Let $G_i$ denote the determinant metric on
$-K_{\Omega_i}$ induced by $\widetilde{\omega}_i$, and put
\begin{equation}\label{ii:lim:eq:Hambient}
 H_i=e^{-v_i}G_i,\qquad \sqrt{-1}\Theta(H_i)=\widetilde{\omega}_i.
\end{equation}
The base volume form $dz^1\wedge\cdots\wedge dz^m$ fixes the
adjunction identification $(-K_{\Omega_i})|_{F_{q_i}}=-K_{F_{q_i}}$.
Write $L_i=-K_{F_{q_i}}$ and $H_{F,i}=H_i|_{F_{q_i}}$ in this
identification. If $G_{F,i}$ is the fibre determinant metric and
$\mathsf S_i$ is the quotient matrix relative to $z$, then
\begin{equation}\label{ii:lim:eq:a-metric}
 H_{F,i}=a_iG_{F,i},\qquad
 a_i=e^{-v_i}\det \mathsf S_i,\qquad c\le a_i\le C,\qquad
 \sqrt{-1}\Theta(H_{F,i})=\eta_i.
\end{equation}
The two-sided bound uses the scaled coordinates: $\mathsf S_i$ equals the
unnormalized quotient in the original coordinates.

\begin{proposition}\label{ii:lim:prop:section-sup}
For every fixed integer $r\ge1$ there is a constant $C_r$, independent
of $i$, such that every $\sigma\in H^0(F_{q_i},L_i^r)$ satisfies
\begin{equation}\label{ii:lim:eq:section-sup}
 \sup_{F_{q_i}}|\sigma|_{H_{F,i}^r}^2
 \le C_r\int_{F_{q_i}}|\sigma|_{H_{F,i}^r}^2\dV_{\eta_i}.
\end{equation}
\end{proposition}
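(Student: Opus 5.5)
The plan is to extend each fibre section to the ambient tube $\Omega_i$ with controlled $L^2$ norm, and then apply the ambient mean-value (Moser) estimate for holomorphic sections of $-K^r$ in the metric $H_i^r$. That estimate is available uniformly through the Sobolev inequality \eqref{ii:lim:eq:ambient-sobolev} and the bounded local potential \eqref{ii:lim:eq:tube-v}. Restricting back to $F_{q_i}$ then gives \eqref{ii:lim:eq:section-sup}, using the adjunction identification and \eqref{ii:lim:eq:a-metric}.

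\textbf{Step 1 (extension).} In the product chart, $\Omega_i\cong\{|z|<R_0\}\times F$, and $-K_{\Omega_i}\cong \mathrm{pr}_F^*(-K_F)$ trivialized along the base by $dz^1\wedge\cdots\wedge dz^m$. Given $\sigma\in H^0(F_{q_i},L_i^r)$, I would take the trivial extension $\widetilde\sigma=\mathrm{pr}_F^*\sigma\otimes(\partial_{z^1}\wedge\cdots\wedge\partial_{z^m})^{r}$. This is holomorphic on $\Omega_i$ and restricts to $\sigma$. The difficulty is that its $H_i^r$-norm on a nearby fibre $F_{q'}$ is measured with the metric on that fibre, not on $F_{q_i}$.

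\textbf{Step 2 (controlling nearby fibres).} I would compare $H_{F,q'}$ with $H_{F,q_i}$ for $|z(q')|<R_0$. Both are metrics on $-K_F$ with curvature $\eta_{q'},\eta_{q_i}\in 2\pi c_1(F)$. Writing $\eta_{q'}=\eta_{q_i}+\ddc\varphi$, the ratio of the metrics is $e^{-r\varphi}$ up to a constant. The forced flow \eqref{ii:lim:eq:forced-flow} is in the time variable, not the base variable, so this bound does not come for free. Instead, I would bound the ratio via the Ricci potential: $\log(H_i\text{-density})$ differs between the two fibres by $v_i$, $\log\det\mathsf S$ and the fibre volume-form ratio. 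The first two are uniformly bounded by \eqref{ii:lim:eq:tube-v} and \eqref{ii:lim:eq:a-metric}.

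For the third, I would avoid pointwise comparison altogether. The idea is to apply the ambient $L^2$ Hörmander estimate in place of an explicit trivial extension: cut off $\widetilde\sigma$ by $\chi(|z|)$ and solve $\bar\partial u=\bar\partial\chi\,\widetilde\sigma$ with weight $e^{-2m\log|z|}$ to force $u|_{F_{q_i}}=0$. The curvature $r\widetilde{\omega}_i+\Ric$-type positivity here comes from \eqref{ii:lim:eq:Hambient}, and on $\Omega_i$ one has $\Ric(\widetilde{\omega}_i)=\widetilde{\omega}_i-\ddc v_i$. Alternatively, the Ohsawa--Takegoshi extension theorem with the same curvature input directly yields an extension $\widehat\sigma$ with $\int_{\Omega_i}|\widehat\sigma|^2_{H_i^r}\dV_{\widetilde{\omega}_i}\le C\int_{F_{q_i}}|\sigma|^2_{H_{F,i}^r}\dV_{\eta_i}$. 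Its constant depends only on the diameter of $\{|z|<R_0\}$ and a bound on the weights, and the weight $e^{-v_i}$ is bounded. I expect this to be the main obstacle: verifying the positivity hypotheses uniformly, with the adjunction factor $\det\mathsf S_i$ entering as a bounded weight.

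\textbf{Step 3 (mean value).} For a holomorphic section, $\Delta_{\widetilde{\omega}_i}|\widehat\sigma|^2_{H_i^r}\ge -r n|\widehat\sigma|^2$, since the curvature of $H_i^r$ is $r\widetilde{\omega}_i$. Moser iteration with the uniform Sobolev inequality \eqref{ii:lim:eq:ambient-sobolev}, using cutoffs in $z$ with gradients bounded by the Schwarz bound, then bounds $\sup_{F_{q_i}}|\widehat\sigma|^2$ by $C_r\int_{\Omega_i}|\widehat\sigma|^2$. Unlike the base directions, the fibres have bounded diameter ($|f_q|$ and the diameter bound hold), so the iteration balls can be taken to be whole tubes. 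Combining Steps 2 and 3 gives \eqref{ii:lim:eq:section-sup}, with constants depending only on $r$, the Sobolev constant, $\sup|v_i|$ and the bounds on $a_i$.
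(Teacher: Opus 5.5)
Your final route is essentially the paper's proof. You extend $\sigma$ from $F_{q_i}$ to the magnified tube $\Omega_i$ by an Ohsawa--Takegoshi type $L^2$ extension, and then apply the Moser $L^2$-to-$L^\infty$ estimate with the uniform ambient Sobolev inequality \eqref{ii:lim:eq:ambient-sobolev} and cutoffs pulled back from the base. The paper uses the Zhou--Zhu adjoint extension theorem with $\psi=m\log|z|^2$, and its residue measure produces exactly the bounded factor $\det\mathsf S_i$ you anticipate.

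The positivity you flag as the main obstacle is settled by writing $-rK_{\Omega_i}=K_{\Omega_i}\otimes(-K_{\Omega_i})^{r+1}$ and putting $H_i^{r+1}$ on the second factor, not $G_i\otimes H_i^r$ (whose curvature $(r+1)\widetilde\omega_i-\ddc v_i$ is uncontrolled). Its curvature is then $(r+1)\widetilde\omega_i>0$, and the only cost is the bounded factor in $|\Sigma|^2_{K_{\Omega_i}\otimes H_i^{r+1}}=e^{-v_i}|\Sigma|^2_{H_i^r}$.
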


\begin{proof}
The tube is weakly pseudoconvex: pull back a proper smooth
plurisubharmonic exhaustion of its base ball; properness of $\Phi$
makes the pullback proper. It is K\"ahler with metric $\widetilde{\omega}_i$.
Set $E_i=(-K_{\Omega_i})^{r+1}$, with metric $H_i^{r+1}$.
Its curvature is $(r+1)\widetilde{\omega}_i$. By adjunction, $\sigma$ is the
restriction of the bundle $K_{\Omega_i}\otimes E_i=-rK_{\Omega_i}$.

Apply the adjoint extension theorem of Zhou--Zhu
\cite[Theorem~1.1]{ZhouZhu} with
\[
 \psi=m\log|z|^2,\qquad \alpha_0=\alpha_1=0,\qquad
 R(t)=e^{-t}.
\]
Shrink $R_0$ once so that $\psi\le-1$, and take their continuous
function $\alpha=-1/2$. The weight has neat analytic singularities and multiplier
ideal equal to the ideal of the central fibre: a germ vanishing
there is $O(|z|)$, and its normal squared-norm integral is bounded
by $C\int_0^\epsilon\rho\,d\rho$, whereas a nonzero restriction
gives $\int_0^\epsilon\rho^{-1}d\rho=\infty$. The normal integral
for $e^{-(1-\delta)\psi}$ is
$\int_0^\epsilon\rho^{2m\delta-1}d\rho<\infty$ for $0<\delta<1$,
so the singularities are log canonical along the fibre.
The function $R$ is admissible by \cite[Remark~1.3]{ZhouZhu},
$C_R=1$, and $e^\psi R(\psi)=1$. Both curvature inequalities in
that theorem hold because $\ddc\psi\ge0$ and the bundle curvature
is positive.

The theorem supplies a holomorphic extension $\Sigma$ to $\Omega_i$.
To check its norms, the canonical metric induced by $\widetilde{\omega}_i$ gives
\begin{equation}\label{ii:lim:eq:adjointnorm}
 |\Sigma|^2_{K_{\Omega_i}\otimes H_i^{r+1}}
       =e^{-v_i}|\Sigma|^2_{H_i^r}.
\end{equation}
The residue measure for $\psi$ is
\begin{equation}\label{ii:lim:eq:residuemeasure}
 dV_{\widetilde{\omega}_i}[\psi]=c_m\det \mathsf S_i\,dV_{\eta_i},
\end{equation}
where $c_m>0$ depends only on dimension and the fixed volume
convention. To check both identities in local frames, let $e$ be
the coordinate frame of $-K_{\Omega_i}$ and put
$J_i=|e|_{G_i}^2=\det(\widetilde{\omega}_{i,I\bar J})$.
For $\Sigma=\sigma e^{\otimes r}$ the adjoint identification is
$\sigma e^*\otimes e^{\otimes(r+1)}$, and hence
\[
 \begin{aligned}
 |\Sigma|^2_{K_{\Omega_i}\otimes H_i^{r+1}}
 &=|\sigma|^2J_i^{-1}(e^{-v_i}J_i)^{r+1}\\
 &=e^{-v_i}|\sigma|^2(e^{-v_i}J_i)^r
  =e^{-v_i}|\Sigma|_{H_i^r}^2.
 \end{aligned}
\]
For the residue measure, the Schur identity gives, at $z=0$,
$dV_{\widetilde{\omega}_i}=\det\mathsf S_i\,dV_{\eta_i}\,dV_z$.
Write $dV_z=c'_m\rho^{2m-1}d\rho\,d\vartheta$, with the angular
measure normalized to have total mass one. The normal integral is
\[
 \int_{\{t<m\log|z|^2<t+1\}}|z|^{-2m}\,dV_z
 =c'_m\int_{e^{t/(2m)}}^{e^{(t+1)/(2m)}}\frac{d\rho}{\rho}
 =\frac{c'_m}{2m}.
\]
As $t\to-\infty$, the annuli shrink to the central fibre. Against
a continuous test function, the remaining smooth volume density
therefore tends to its value at $z=0$. This proves
\eqref{ii:lim:eq:residuemeasure}, with $c_m=c'_m/(2m)$, and shows that
no power of $\tau_i$ remains in the extension norm.
Equations~\eqref{ii:lim:eq:adjointnorm}--\eqref{ii:lim:eq:residuemeasure},
bounded $v_i$, and bounded $\det \mathsf S_i$ give
\begin{equation}\label{ii:lim:eq:extension-L2}
 \int_{\Omega_i}|\Sigma|^2_{H_i^r}\dV_{\widetilde{\omega}_i}
 \le C\int_{F_{q_i}}|\sigma|^2_{H_{F,i}^r}\dV_{\eta_i}.
\end{equation}

The curvature of $H_i^r$ is $r\widetilde{\omega}_i$. The Bochner inequality
used in \cite[Proposition~8.4, proof of equation~(8.4)]{JST}
therefore applies to the regularized norm
$w_\eps=(|\Sigma|_{H_i^r}^2+\eps^2)^{1/2}$ and gives
\[
 \Delta_{\widetilde{\omega}_i}w_\eps\ge-\frac{rn}{2}w_\eps.
\]
The local Moser argument in the same proof uses this inequality,
a Sobolev bound, and cutoffs with controlled gradients.
Here the Sobolev bound is \eqref{ii:lim:eq:ambient-sobolev}.
Nested base cutoffs of widths comparable to $2^{-j}$ pull back to
cutoffs with $|\partial\zeta_j|_{\widetilde{\omega}_i}\le C2^j$ by the Schwarz bound \eqref{ii:eq:basic}.
Thus the cited argument, followed by $\eps\searrow0$, gives
\[
 \sup_{\Omega_i'}|\Sigma|^2_{H_i^r}\le C_r
       \int_{\Omega_i}|\Sigma|^2_{H_i^r}\dV_{\widetilde{\omega}_i},
\]
where the fixed inner tube still contains the entire central fibre.
Combining with \eqref{ii:lim:eq:extension-L2} proves the proposition.
\end{proof}

\begin{remark}\label{ii:lim:rem:jst-section-estimate}
The local $L^2$-to-$L^\infty$ estimate above is the Moser estimate
of Jian--Song--Tian \cite[Proposition~8.4]{JST}, applied with the
uniform ambient Sobolev inequality and the pulled-back base
cutoffs. The fibrewise extension estimate
\eqref{ii:lim:eq:extension-L2} verifies its input here. The
complete convergence of anticanonical section spaces, including
the behavior near the singular set, is proved below.
\end{remark}

\begin{lemma}\label{ii:lim:lem:hormander}
If $e$ is a smooth $\bar\partial$-closed $(0,1)$-form with values
in $L_i^r$, there is $u$ with $\bar\partial u=e$ and
\begin{equation}\label{ii:lim:eq:hormander}
 \int_{F_{q_i}}|u|^2_{H_{F,i}^r}a_i\dV_{\eta_i}
 \le\frac1{r+1}\int_{F_{q_i}}|e|^2_{H_{F,i}^r,\eta_i}a_i\dV_{\eta_i}.
\end{equation}
Thus the corresponding unweighted $L^2$ estimate is uniform in $i$.
\end{lemma}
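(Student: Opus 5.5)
We apply Hörmander's $L^2$ estimate on the compact Kähler manifold $(F_{q_i},\eta_i)$ for the line bundle $L_i^r=-rK_{F_{q_i}}$ with a modified metric. Write $L_i^r=K_{F_{q_i}}\otimes L_i^{r+1}$ and give $L_i^{r+1}$ the metric $H_{F,i}^{r+1}$; by \eqref{ii:lim:eq:a-metric} its curvature is $(r+1)\eta_i$. Hörmander's theorem for $(n-m,1)$-forms with values in a bundle of curvature at least $(r+1)\eta_i$ gives a solution $u$ of $\bar\partial u=e$ satisfying
\[
\|u\|^2\le\tfrac1{r+1}\|e\|^2,
\]
with all norms computed using the fibre metric $\eta_i$ on $K_{F_{q_i}}$ and $H_{F,i}^{r+1}$ on $L_i^{r+1}$. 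Nakano positivity is automatic here because the bundle is a line bundle whose curvature is the Kähler form itself. The compactness of $F_{q_i}$ removes any pseudoconvexity issue, and $H^{0,1}$ obstructions cannot occur since the curvature is strictly positive.

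The next step identifies these norms with the weighted norms in \eqref{ii:lim:eq:hormander}. As in the frame computation for \eqref{ii:lim:eq:adjointnorm}, take a local frame $e_F$ of $L_i=-K_F$ with $|e_F|^2_{G_{F,i}}=J_F=\det\eta_i$. For $u=\sigma e_F^{\otimes r}$, regarded as $\sigma e_F^*\otimes e_F^{\otimes(r+1)}$, the adjoint norm is
\[
|\sigma|^2J_F^{-1}(a_iJ_F)^{r+1}=a_i|u|^2_{H_{F,i}^r}.
\]
The same computation applies pointwise to the $(0,1)$-form $e$, with the $\eta_i$-contraction on the form part. Thus the adjoint norms are exactly the integrands weighted by $a_i$, and Hörmander's inequality becomes \eqref{ii:lim:eq:hormander}. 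One bookkeeping point needs care: Hörmander's estimate is stated for $(n-m,q)$-forms with $|\cdot|^2 dV$ normalized by the Kähler metric, so the conventions must match the fibre volume $\dV_{\eta_i}$. Any mismatch is only a dimensional constant, and since the bound $1/(r+1)$ is sharp, that constant must be tracked.

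Finally, \eqref{ii:lim:eq:a-metric} gives $c\le a_i\le C$ uniformly in $i$. Hence
\[
\int|u|^2_{H_{F,i}^r}\dV_{\eta_i}\le\frac{C}{c(r+1)}\int|e|^2\dV_{\eta_i},
\]
which is the uniform unweighted estimate. The main obstacle is justifying the twisting. One could instead use the metric $H_{F,i}^r$ on $L_i^r$ directly, but that bundle is not of the form $K\otimes(\text{positive})$ relative to $\eta_i$ unless $\eta_i$ is Kähler--Einstein, and the Bochner term would involve $\Ric(\eta_i)$. The adjoint decomposition avoids this, at the cost of producing the weight $a_i$, which enters precisely through the comparison $H_{F,i}=a_iG_{F,i}$ between the ambient-induced metric and the determinant metric of $\eta_i$.
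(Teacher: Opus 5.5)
Your proposal is correct and follows the paper's proof essentially step for step: you write $L_i^r=K_{F_{q_i}}\otimes L_i^{r+1}$, apply Demailly's $L^2$ estimate to $(n-m,1)$-forms with values in $(L_i^{r+1},H_{F,i}^{r+1})$, whose curvature is $(r+1)\eta_i$, identify the adjoint metric as $G_{F,i}^{-1}H_{F,i}^{r+1}=a_iH_{F,i}^r$, and then use $c\le a_i\le C$. The dimensional constant you flag does not actually arise: $G_{F,i}$ is by definition dual to the $\eta_i$-induced metric on $K_{F_{q_i}}$, so identifying $(n-m,1)$-forms with $K_{F_{q_i}}$-valued $(0,1)$-forms is isometric with respect to $\dV_{\eta_i}$, and the factor $1/(r+1)$ holds exactly.
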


\begin{proof}
Use $L_i^r=K_{F_{q_i}}\otimes L_i^{r+1}$ and apply
\cite[Theorem~8.4 and formula~(7.7)]{DemaillyL2} to $(n-m,1)$-forms
with values in $(L_i^{r+1},H_{F,i}^{r+1})$ on the compact
K\"ahler fibre. The coefficient curvature is $(r+1)\eta_i$,
whose commutator with $\Lambda$ on these forms is
$(r+1)((n-m)+1-(n-m))\mathrm{Id}=(r+1)\mathrm{Id}$.
The adjoint metric is, by \eqref{ii:lim:eq:a-metric},
\[
 G_{F,i}^{-1}H_{F,i}^{r+1}
 =G_{F,i}^{-1}(a_iG_{F,i})^{r+1}=a_iH_{F,i}^r.
\]
Thus the cited estimate has measure $a_i\dV_{\eta_i}$ in
the $H_{F,i}^r$ norm and inverse curvature factor $(r+1)^{-1}$,
which is \eqref{ii:lim:eq:hormander}. The bounds $c\le a_i\le C$
give its unweighted form without an intrinsic Ricci bound.
\end{proof}

\label{ii:lim:sec:algebraic}

We identify the limiting metrics and record the integral
convergence used for the section spaces. Normalize the residual
potential by
\[
 \Ric(\omega_S)+\ddc u_S=\omega_S,\qquad
 d\nu_S=V^{-1}dV_S,\qquad \int_Se^{-u_S}d\nu_S=1.
\]
It is bounded because it is locally Lipschitz on compact $S$.
On regular fibre graphs, the trace equations for $v_i$ and
$f_i=f_{q_i}(s_i)$, their uniform bounds, and interior elliptic
estimates give smooth local limits $v_\infty|_S$ and $f_\infty$.
Their differences from $u_S$ are bounded and pluriharmonic.
For either difference $b$, the capacity cutoffs satisfy
\[
 \int\chi_j^2|\nabla b|^2dV_S
 =-2\int\chi_j b\langle\nabla\chi_j,\nabla b\rangle dV_S
 \le\tfrac12\int\chi_j^2|\nabla b|^2dV_S
       +2\|b\|_\infty^2\int|\nabla\chi_j|^2dV_S.
\]
Letting $j\to\infty$ shows that $b$ is constant on connected
$\Reg S$.

Here is the common integral convergence argument. Put
$dV_i=\eta_i^{n-m}/(n-m)!$ and $dV_\infty=\omega_S^{n-m}/(n-m)!$, both of mass
$V_F$. If $\zeta_i\to \zeta_\infty$ smoothly on regular graphs and
$|\zeta_i|,|\zeta_\infty|\le M$, then for a compact regular domain $U$
and its graph $U_i$,
\[
 \begin{aligned}
 \left|\int_{F_{q_i}}\zeta_i\,dV_i-\int_S\zeta_\infty\,dV_\infty\right|
 &\le\left|\int_{U_i}\zeta_i\,dV_i-\int_U\zeta_\infty\,dV_\infty\right|\\
 &\quad+M\bigl(\Vol_{dV_i}(F_{q_i}\setminus U_i)
                     +\Vol_{dV_\infty}(S\setminus U)\bigr).
 \end{aligned}
\]
The first term tends to zero; no loss of volume makes the last
line arbitrarily small as $U$ exhausts $\Reg S$. Apply this to
$e^{-f_i}$, bounded by the uniform bound for $f_i$. Dividing by
$V_F$ gives $\int_Se^{-f_\infty}d\nu_S=1$. Since
$f_\infty=u_S+c$, this equality reads $1=e^{-c}$, and hence
\begin{equation}\label{ii:lim:eq:residual-potential}
 f_\infty=u_S,\qquad |u_S|\le C.
\end{equation}
The quotient determinant converges to a positive constant on the
product slice. Therefore \eqref{ii:lim:eq:a-metric} gives
\begin{equation}\label{ii:lim:eq:soliton-Hlimit}
 H_{F,i}\longrightarrow H_S^{\mathrm{tube}}=c_0e^{-u_S}G_S
       \quad\text{locally smoothly on }\Reg S,
\end{equation}
where $c_0>0$ includes that determinant and the constant in
$v_\infty|_S-u_S$. The bounded nonvanishing anticanonical frames
of \cite[proof of Proposition~4.8, equation~(4.7)]{HZ} and the
continuous weights of \cite[Proposition~4.6]{HZ} restrict to
$S$. Thus every Cartier power of $H_S^{\mathrm{tube}}$ has
bounded continuous weights, and its global sections have bounded
norm. Recall that $dV_\infty=2^{-(n-m)}dV_S$.

\begin{proposition}\label{ii:lim:prop:h0}
For every sufficiently divisible integer $r>0$,
\begin{equation}\label{ii:lim:eq:h0}
 h^0(F,-rK_F)=h^0(S,-rK_S).
\end{equation}
There are bases on the left which converge locally smoothly on
fibre graphs to a basis on the right.
\end{proposition}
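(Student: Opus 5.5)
We prove the two inequalities $h^0(F,-rK_F)\le h^0(S,-rK_S)$ and $h^0(F,-rK_F)\ge h^0(S,-rK_S)$ separately. Each direction uses the uniform analytic estimates already established, together with the no-volume-loss statement of Proposition~\ref{ii:lim:prop:no-loss}.

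\emph{Lower semicontinuity (sections pass to the limit and stay independent).} Fix $r$ so that $-rK_S$ is Cartier; since $F_{q_i}\cong F$, we have $N_r:=h^0(F_{q_i},L_i^r)=h^0(F,-rK_F)$. Choose bases $\{\sigma_{i,k}\}_{k=1}^{N_r}$ of $H^0(F_{q_i},L_i^r)$ that are orthonormal in $L^2(H_{F,i}^r,\dV_{\eta_i})$. Proposition~\ref{ii:lim:prop:section-sup} gives $\sup|\sigma_{i,k}|\le C_r$ uniformly in $i$. On each regular fibre graph these sections are holomorphic for smoothly converging complex structures and hermitian metrics, by \eqref{ii:lim:eq:soliton-Hlimit}. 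Interior estimates and a diagonal argument therefore produce subsequential locally smooth limits $\sigma_{\infty,k}$, which are holomorphic sections of $-rK_S$ over $\Reg S$ with bounded $H_S^{\mathrm{tube}}$-norm. Bounded sections extend across the singular set: the norms are bounded and $S$ is normal, the weights of $H_S^{\mathrm{tube}}$ are bounded and continuous, and $\operatorname{codim}S_{\mathrm{sing}}\ge2$, so the Riemann extension theorem applies. The inner products $\int\langle\sigma_{i,k},\sigma_{i,l}\rangle\,\dV_{\eta_i}$ are uniformly bounded integrands converging smoothly on regular graphs. The common integral convergence argument (no loss of volume) shows that they converge to the corresponding inner products on $S$. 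Hence the limits are orthonormal, and so linearly independent, which gives $h^0(S,-rK_S)\ge N_r$.

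\emph{Upper bound (sections on $S$ are approximated by sections on the fibres).} Let $\{s_k\}$ be an orthonormal basis of $H^0(S,-rK_S)$. For each $s_k$, fix a capacity cutoff $\chi_j$ from \eqref{ii:lim:eq:capacity} and a compact regular domain $U$ with $\chi_j$ supported in $U$. Transport $\chi_js_k$ to $F_{q_i}$ through the fibre graph and the smooth convergence of the bundles. Trivializing frames are obtained by parallel transport, or by the $\bar\partial$-closeness coming from the smooth convergence of the complex structures. This yields smooth sections $\tilde s_{k,i}$ of $L_i^r$ with
\[
 \limsup_i\|\bar\partial\tilde s_{k,i}\|_{L^2}^2\le C\int_S|\nabla\chi_j|^2|s_k|^2\,\dV_S+o_j(1)\le C\|s_k\|_\infty^2\int|\nabla\chi_j|^2\,\dV_S\to0.
\]
Lemma~\ref{ii:lim:lem:hormander} then gives $u_{k,i}$ with $\bar\partial u_{k,i}=\bar\partial\tilde s_{k,i}$ and uniformly small $L^2$ norm, because the weights $a_i$ are uniformly comparable to constants. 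The holomorphic sections $\tilde s_{k,i}-u_{k,i}$ have Gram matrix close to that of $\{\chi_js_k\}$, which is close to the identity, again by no volume loss. Hence they are linearly independent for large $i$, and $N_r\ge h^0(S,-rK_S)$.

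Combining the two directions gives \eqref{ii:lim:eq:h0}. Because the dimensions are equal, the orthonormal bases from the first step converge to a basis of $H^0(S,-rK_S)$, which is the stated convergence of bases. The main obstacle is the $L^2$ convergence of inner products across the singular set in both steps. The uniform sup bound, the bounded weights, and $V_S=V$ from Proposition~\ref{ii:lim:prop:no-loss} are exactly what prevent $L^2$ mass from escaping to the singular set or to the complement of the graph domains. The other delicate point is ensuring that the transported frames match the adjunction identification used to define $H_{F,i}$; I would check this directly from \eqref{ii:lim:eq:a-metric}.
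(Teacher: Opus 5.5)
Your proposal is correct and follows essentially the same route as the paper. Both get $h^0(S,-rK_S)\ge h^0(F,-rK_F)$ from orthonormal bases bounded by Proposition~\ref{ii:lim:prop:section-sup}, extended across the singular set by normal Hartogs extension, and kept orthonormal by the no-volume-loss integral convergence; both get the reverse inequality by transporting $\chi_j\sigma$ through fibre graphs and correcting with Lemma~\ref{ii:lim:lem:hormander}. The only real difference is how independence is shown in the approximation step: you compare Gram matrices at a fixed large $j$ and then take $i$ large, whereas the paper makes a diagonal choice $j(i)$ and uses interior elliptic estimates so that the corrected sections converge locally smoothly to $\sigma$.
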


\begin{proof}
Take $r$ divisible by the Cartier index of $S$, and first let
$\sigma\in H^0(S,-rK_S)$. Transfer $\chi_j\sigma$ to the
corresponding regular fibre graph and extend by zero, obtaining
a smooth global section $a_{j,i}$ on $F_{q_i}$. The bundle
identification $T_i$ is obtained by projecting between the
converging $(1,0)$ tangent bundles and taking determinants.
On the fixed support of $\chi_j$, write
$\bar\partial_i^{\rm tr}=T_i^{-1}\bar\partial T_i$. Then
\[
 T_i^{-1}\bar\partial a_{j,i}
 =(\bar\partial\chi_j)\sigma
   +(\bar\partial_i^{\rm tr}-\bar\partial)(\chi_j\sigma).
\]
The coefficients of the last first-order operator tend smoothly
to zero. Consequently
\begin{equation}\label{ii:lim:eq:barerror}
 \limsup_{i\to\infty}\|\bar\partial a_{j,i}\|_{L^2}^2
 \le C_r\|\sigma\|_{L^\infty((H_S^{\mathrm{tube}})^r)}^2
             \int_{\Reg S}|\nabla\chi_j|^2dV_S.
\end{equation}
Either the weighted or the uniformly comparable unweighted norms
may be used. The datum is $\bar\partial$-closed, so
Lemma~\ref{ii:lim:lem:hormander} supplies corrections $u_{j,i}$
with $\bar\partial u_{j,i}=\bar\partial a_{j,i}$ and this same
$L^2$ bound up to a fixed factor.

Choose the cutoffs with energy at most $2^{-4j}$ and increasing
integers $I_j$ such that, for $i\ge I_j$, the fixed-support
operator errors have $L^2$ norm at most $2^{-j}$ and are at most
$2^{-j}$ in $C^j$ on the first $j$ members of a regular
exhaustion, lying where $\chi_j=1$. With
$j(i)=\max\{j:I_j\le i\}$, the corrections satisfy
$\|u_{j(i),i}\|_{L^2}\le C_r2^{-j(i)}\to0$.
On regular domains $U\Subset U'$, interior elliptic estimates
and Sobolev embedding give, for every $\ell$,
\[
 \|u_{j(i),i}\|_{C^\ell(U)}
 \le C_{\ell,U,U'}\bigl(\|u_{j(i),i}\|_{L^2(U')}
       +\|\bar\partial a_{j(i),i}\|_{H^{\ell+n-m+1}(U')}\bigr)
 \longrightarrow0.
\]
The constants are uniform because the local coefficients
converge smoothly. Hence the holomorphic sections
$\sigma_i=a_{j(i),i}-u_{j(i),i}$ converge locally smoothly to
$\sigma$. Applying the same construction to a finite independent
family preserves independence for large $i$, and proves
\begin{equation}\label{ii:lim:eq:h0-first}
 h^0(F,-rK_F)\ge h^0(S,-rK_S).
\end{equation}

For the reverse inequality, take an orthonormal basis
$\sigma_i^0,\ldots,\sigma_i^N$ of $H^0(F_{q_i},-rK_{F_{q_i}})$
for $H_{F,i}^r$ and $dV_i$. Its dimension is fixed because
$F_{q_i}\cong F$. Proposition~\ref{ii:lim:prop:section-sup}
and regular elliptic estimates give locally smooth limits
$\sigma^a$ on $\Reg S$. Since $-rK_S$ is Cartier, normal
Hartogs extension in its local holomorphic frames makes these
global sections. Their pointwise pairings are uniformly bounded
by the section estimate. The integral convergence proved above,
applied to $\langle\sigma_i^a,\sigma_i^b\rangle$, therefore gives
\[
 \int_S\langle\sigma^a,\sigma^b\rangle_{(H_S^{\mathrm{tube}})^r}
                  \frac{\omega_S^{n-m}}{(n-m)!}=\delta_{ab}.
\]
Thus the limits are independent. This proves the reverse
inequality and, together with \eqref{ii:lim:eq:h0-first}, their
completeness. Constants may depend on $r$.
\end{proof}

\begin{proposition}\label{ii:lim:prop:flat}
There is a projective flat $\Q$-Gorenstein family
$\mathcal X\to(C,0)$ over a smooth pointed curve, isomorphic to
$F\times(C\setminus\{0\})$ away from $0$, with central fibre $S$
and relatively ample $-K_{\mathcal X/C}$.
\end{proposition}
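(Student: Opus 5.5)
The plan is to build the degeneration by the standard Hilbert scheme / embedding argument from the convergence of complete section spaces in Proposition~\ref{ii:lim:prop:h0}.

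\emph{Step 1: a uniform anticanonical embedding.} Fix $r$ sufficiently divisible so that three things hold simultaneously:
\begin{enumerate}
\item $-rK_F$ is very ample, with vanishing higher cohomology;
\item $-rK_S$ is Cartier and very ample on the $\Q$-Fano variety $S$ of Lemma~\ref{ii:lim:lem:qfano};
\item both embeddings are projectively normal, with ideals generated in bounded degree.
\end{enumerate}
Proposition~\ref{ii:lim:prop:h0} gives $N+1=h^0(F,-rK_F)=h^0(S,-rK_S)$. It also gives $H_{F,i}^r$-orthonormal bases $\{\sigma_i^a\}$ on $F_{q_i}$ converging locally smoothly on fibre graphs to a basis $\{\sigma^a\}$ on $S$. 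These define embeddings $\iota_i:F_{q_i}\to\P^N$ and $\iota:S\to\P^N$.

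\emph{Step 2: convergence in the Hilbert scheme.} Every $\iota_i(F_{q_i})$ is a $\mathrm{PGL}$-translate $g_i\cdot F_0$ of one fixed embedded copy $F_0$, since $F_{q_i}\cong F$ and the polarization is canonical. All of these have the same Hilbert polynomial by \eqref{ii:lim:eq:degree} and the fixed $h^0$. By properness of the Hilbert scheme, after passing to a subsequence $g_i\cdot F_0$ converges to a point $[W]$. I then need to identify $W=\iota(S)$. On compact regular domains, $\iota_i\circ(\text{graph})\to\iota$ smoothly, so $\iota(\Reg S)\subset W$; density and compactness give $\iota(S)\subset W$. The Hilbert polynomials agree: $\deg W=(-K_F)^{n-m}$ equals $\deg\iota(S)=(-K_S)^{n-m}$ by \eqref{ii:lim:eq:degree}. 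Hence $W$ agrees with $\iota(S)$ up to embedded or lower-dimensional components. To exclude these I compare degree-$k$ pieces of the homogeneous ideals. For each $k$, the same $\bar\partial$ and extension argument (Proposition~\ref{ii:lim:prop:h0} with $kr$) shows that sections of $-krK$ on $S$ are limits of sections on $F_{q_i}$. Thus the restriction maps $H^0(\P^N,\mathcal O(k))\to H^0(\cdot,\mathcal O(k))$ have ranks converging to the rank for $S$, and the ideals of $g_i F_0$ converge to $I_{\iota(S)}$ in each degree. This is the step I expect to be the main obstacle: it requires the ideal in every degree, with $k$-dependent constants, to pass to the limit without loss.

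\emph{Step 3: a curve through the limit point.} The orbit closure $\overline{\mathrm{PGL}(N+1)\cdot[F_0]}$ is a quasi-projective variety containing $[\iota(S)]$ in its closure. By the curve selection lemma (or by taking a general curve section of the orbit closure through that point), there is a smooth pointed curve $(C,0)$ with a map to the orbit closure sending $C\setminus\{0\}$ into the orbit and $0\mapsto[\iota(S)]$. Pulling back the universal family gives a flat projective family $\mathcal X\to C$ with central fibre $S$. Over $C\setminus\{0\}$ the fibres are all isomorphic to $F$, and after an étale or finite base change this family trivializes as $F\times(C\setminus\{0\})$. Isotriviality plus a smooth fibre allows this: the Isom scheme is finite type over the base.

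\emph{Step 4: the $\Q$-Gorenstein property.} The total space is normal, since a normal central fibre and normal general fibres give normality by Serre's criterion. $K_{\mathcal X/C}$ is $\Q$-Cartier, because the central fibre is klt $\Q$-Fano and inversion of adjunction plus the constancy of $(-K)^{n-m}$ imply that the index-$r$ reflexive sheaf $\omega^{[-r]}_{\mathcal X/C}$ restricts to $\mathcal O(1)$ on each fibre. Concretely, $\mathcal O_{\P^N}(1)|_{\mathcal X}$ agrees with $\omega_{\mathcal X/C}^{[-r]}$ off the central fibre. The two reflexive sheaves therefore differ by a multiple of the Cartier divisor $\mathcal X_0$, which is trivial, and this gives both the $\Q$-Gorenstein property and the relative ampleness of $-K_{\mathcal X/C}$.
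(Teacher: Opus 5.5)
Your proposal is correct and follows the paper's route: a Hilbert-scheme limit of the anticanonical embeddings furnished by Proposition~\ref{ii:lim:prop:h0}, a curve in the $\mathrm{PGL}$-orbit closure with a finite base change to trivialize the punctured family, and the observation that $H+rK_{\mathcal X/C}$ differs from a principal divisor by a multiple of the reduced integral central fibre $\operatorname{div}(t)$ (your ``concretely'' argument is what does the work; the appeal to inversion of adjunction and volume constancy is unnecessary, and inversion of adjunction in fact presupposes the $\Q$-Cartier property). The step you flag as the main obstacle is simpler than you expect, since no uniformity in the degree is needed: applying Proposition~\ref{ii:lim:prop:h0} separately in each degree $ar$ gives $P_W(a)=h^0(F,-arK_F)=h^0(S,-arK_S)=P_S(a)$, and together with $\mathcal I_W\subset\mathcal I_S$ the exact sequence $0\to\mathcal I_S/\mathcal I_W\to\mathcal O_W\to\mathcal O_S\to0$ forces $W=S$ scheme-theoretically, exactly as the paper does.
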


\begin{proof}
Choose $r$ sufficiently divisible that $-rK_F$ and $-rK_S$
are very ample. Proposition~\ref{ii:lim:prop:h0} gives complete
bases defining embeddings $\Phi_i:F\hookrightarrow\PP^N$
converging on regular graphs to an embedding of $S$. Take a
Hilbert-scheme limit $W$ of $\Phi_i(F)$. The closed universal
family implies $S\subset\supp W$; since $S$ is reduced, every
function in $\mathcal I_W$ vanishes on $S$, so
$\mathcal I_W\subset\mathcal I_S$. For all sufficiently large $a$,
\[
 P_W(a)=P_F(a)=h^0(F,-arK_F)=h^0(S,-arK_S)=P_S(a),
\]
again by Proposition~\ref{ii:lim:prop:h0}. The exact sequence
\[
 0\longrightarrow\mathcal I_S/\mathcal I_W
 \longrightarrow\mathcal O_W\longrightarrow\mathcal O_S
 \longrightarrow0
\]
shows that $\mathcal I_S/\mathcal I_W$ has zero Hilbert
polynomial, hence is zero. Thus $W=S$ scheme-theoretically.

The points $[\Phi_i(F)]$ lie in one $\operatorname{PGL}(N+1)$
orbit. Choose an algebraic curve through $[S]$ in its closure
with generic point in that orbit. The generic orbit-map fibre
has a closed point over a finite field extension. After the
corresponding finite base change and normalization, the generic
point therefore lifts to $\operatorname{PGL}(N+1)$. Represent
the lift by an invertible matrix over the curve's function field,
and shrink around $0$ so that the matrix and its inverse are
regular away from $0$. Pulling back the universal family gives
the required projective flat family, with a polarized
trivialization over the punctured curve.

We check the $\Q$-Gorenstein property. All fibres are normal:
they are $S$ at $0$ and $F$ elsewhere. Flatness over the smooth
curve gives normality of the total space \cite[Tag~0C22]{Stacks};
it is integral because the generic fibre is integral and flatness
excludes vertical components. Let $H$ be the relative hyperplane
divisor and $t$ a uniformizer at $0$. On the punctured family
$H\sim-rK_{\mathcal X/C}$. Using the Weil-divisor correspondence
on the normal total space \cite[Tag~0EBM]{Stacks}, choose a
rational function $\xi$ representing this equivalence. Then
\[
 E=H+rK_{\mathcal X/C}-\operatorname{div}(\xi)
\]
is supported on the integral central fibre, so $E=bS$ for an
integer $b$. Reducedness of that fibre implies that the valuation
of $t$ at its generic point is one; hence $\operatorname{div}(t)=S$.
Consequently
\[
 H+rK_{\mathcal X/C}=\operatorname{div}(\xi t^b),\qquad
 \mathcal O_{\mathcal X}(-rK_{\mathcal X/C})
       \cong\mathcal O_{\mathcal X}(H).
\]
Thus $-rK_{\mathcal X/C}$ is Cartier and relatively ample.
Adjunction on the smooth locus of each normal fibre, followed
by reflexive extension across codimension two, identifies its
restriction with $-rK_F$ or $-rK_S$. All fibres are klt, so this
is a $\Q$-Gorenstein family of $\Q$-Fano varieties.
\end{proof}

\begin{proposition}\label{ii:lim:prop:soliton-static}
Every fixed-fibre tangent has a compact residual soliton $S$ with
\[
 \Vol(S,g_S)=V,\qquad (-K_S)^{n-m}=(-K_F)^{n-m},\qquad |u_S|\le C.
\]
There is a polarized $\Q$-Gorenstein degeneration of $F$ to $S$,
and complete anticanonical bases converge in each sufficiently
divisible degree. The same conclusions hold for moving sequences
whose limits are split shrinking solitons. On each fixed fibre,
$B_q(s):=\Var_{\nu_{q,s}}D_q(s)\to0$.
\end{proposition}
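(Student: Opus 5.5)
This proposition is essentially a collection of results proved earlier in the section, so the plan is to assemble them in order rather than prove anything new. Fix a fixed-fibre tangent at a limiting point above $q$. By the splitting input \cite[Proposition~2.3, Corollary~3.1 and Theorem~3.3]{Splitting}, its time-$-1$ slice is $\C^m\times S$ with $S$ connected, complete, klt and with dense regular part. Lemma~\ref{ii:lim:lem:compact} then gives compactness of $S$ and $0<V_S\le V$. The compactness comes from the uniform ambient fibre-diameter bound of Theorem~\ref{ii:thm:main}, transported through fibre graphs. Proposition~\ref{ii:lim:prop:no-loss} upgrades this to $V_S=V$. Its mechanism is to apply the fibre Poincar\'e inequality \eqref{ii:lim:eq:fibre-poincare} to the capacity cutoffs \eqref{ii:lim:eq:capacity}, which forces $b-b^2\le0$ for $b=V_S/V$. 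The same proposition yields $\Var_{\nu_{q,s}}D_q(s)\to0$, which is exactly the final assertion $B_q(s)\to0$.

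Next I would invoke Lemma~\ref{ii:lim:lem:qfano}. It shows that $S$ is a projective klt $\Q$-Fano variety whose soliton current lies in $2\pi c_1(S)$ with full mass. Combined with $V_S=V$, this gives $(-K_S)^{n-m}=(-K_F)^{n-m}$. The bound $|u_S|\le C$ is \eqref{ii:lim:eq:residual-potential}. It follows from identifying $u_S$ with the smooth limit $f_\infty$ of the intrinsic fibre potentials, which are uniformly bounded by Proposition~\ref{ii:rel:fbound}. The identification uses the capacity argument, which shows bounded pluriharmonic differences are constant, and the normalization fixed by the integral convergence lemma. Convergence of complete anticanonical bases in each sufficiently divisible degree is Proposition~\ref{ii:lim:prop:h0}. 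That proof has two halves: the sup bound of Proposition~\ref{ii:lim:prop:section-sup} with volume preservation gives one inequality of dimensions, and cutoff plus H\"ormander correction (Lemma~\ref{ii:lim:lem:hormander}) gives the other. The polarized $\Q$-Gorenstein degeneration is then Proposition~\ref{ii:lim:prop:flat}, obtained from the Hilbert-scheme limit of the converging embeddings.

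For moving sequences whose limit is a split shrinking soliton, I would rerun the same chain, checking that each ingredient is uniform in $q$. The relevant ingredients are the ambient diameter bound, the Poincar\'e constant, the bound on $f_q$, the quotient bounds \eqref{ii:lim:eq:SQ}, and the tube potentials \eqref{ii:lim:eq:tube-v}. All of these were stated uniformly for centres in the compact ball $K_0$, and the fibre graphs converge in the same way. The section estimates of Proposition~\ref{ii:lim:prop:section-sup} and Lemma~\ref{ii:lim:lem:hormander} are already formulated along arbitrary $(q_i,t_i)$.

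The step I expect to be most delicate is the moving case of the no-loss-of-volume argument. It needs capacity cutoffs \eqref{ii:lim:eq:capacity} for a limit that is only assumed to be a split shrinking soliton, not a tangent flow at a fixed point. I would obtain these from the curvature-radius estimates in \cite[Proposition~3.2 and Lemma~3.1]{Splitting} applied to the limit soliton, using compactness of $S$ (already available from the uniform diameter bound) to compare weighted and unweighted energies. Everything else is a direct citation of the preceding results.
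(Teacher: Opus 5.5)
Your proposal is correct and matches the paper's proof, which is exactly this assembly: Lemma~\ref{ii:lim:lem:compact} and Proposition~\ref{ii:lim:prop:no-loss} give compactness, $V_S=V$ and $B_q(s)\to0$; Lemma~\ref{ii:lim:lem:qfano} and \eqref{ii:lim:eq:residual-potential} give the degree and potential bounds; and Propositions~\ref{ii:lim:prop:h0} and~\ref{ii:lim:prop:flat} give the section convergence and the degeneration. The paper treats the moving case the same way you do: it checks uniformity in $q$ and obtains the capacity cutoffs from self-similarity together with \cite[Proposition~3.2 and Lemma~3.1]{Splitting} on the compact residual.
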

\begin{proof}
Compactness and volume preservation follow from
Lemma~\ref{ii:lim:lem:compact} and Proposition~\ref{ii:lim:prop:no-loss}.
Lemma~\ref{ii:lim:lem:qfano} and \eqref{ii:lim:eq:residual-potential} give the
degree and potential bounds. Propositions~\ref{ii:lim:prop:h0}
and~\ref{ii:lim:prop:flat} give the section convergence and degeneration.
The last assertion is the variance conclusion of
Proposition~\ref{ii:lim:prop:no-loss}.
\end{proof}

\subsection{Entropy and moving fibres}\label{ii:lim:sec:uniform}\label{ii:lim:sec:soliton-moving}\label{ii:lim:sec:moving}

A tangent at a fixed terminal point is a shrinker, but this does
not by itself settle the case of base points moving with the
rescaling parameter. The necessary uniformity comes from entropy.
We first express the entropy of a split shrinker as its volume
contribution minus a relative entropy. When the fixed-fibre
terminal entropies agree, continuity and monotonicity give uniform
convergence by compactness; entropy rigidity then makes every
moving limit a shrinker as well.

Suppose $Z=\C^m\times S$ is any split shrinking soliton with compact
residual $S$ and real regular volume $V_S$. At time $-1$ its
distinguished probability is the product of a Euclidean Gaussian
and a residual probability $\sigma_S$; denote the full product probability by
$\sigma_Z$. Normalize the residual
potential $u$ by
\[
 d\sigma_S=V_S^{-1}e^{-u}\dV_S,\qquad
 \int_{\Reg S}e^{-u}\dV_S=V_S.
\]
After translating the Euclidean origin, the full density potential is
\[
 f_Z(z,y)=\frac{|z|^2}{4}+u(y)+\log\frac{V_S}{(4\pi)^{n-m}}.
\]
Indeed, the normalized product probability is
\[
 d\sigma_Z=d\gamma_m\,d\sigma_S,\qquad
 d\gamma_m=(4\pi)^{-m}e^{-|z|^2/4}\,dz,
 \qquad \int_{\C^m}\frac{|z|^2}{4}\,d\gamma_m=m.
\]
The last equality is the sum of the second moments of the $2m$
real Gaussian coordinates, each of variance $2$. Hence
\[
 \begin{aligned}
 \int f_Z\,d\sigma_Z-n
 &=m+\int_Su\,d\sigma_S
       +\log\frac{V_S}{(4\pi)^{n-m}}-n\\
 &=\log\frac{V_S}{(4\pi)^{n-m}}-(n-m)+\int_Su\,d\sigma_S.
 \end{aligned}
\]
For probability measures $d\sigma=\rho\,dP$, the relative entropy
(Kullback--Leibler divergence) is defined by
\[
 \Ent(\sigma\mid P):=\int\rho\log\rho\,dP.
\]
Here $dP=V_S^{-1}dV_S$ and $\rho=e^{-u}$, so
$d\sigma_S=\rho\,dP$, $\int\rho\,dP=1$, and
\[
 \Ent(\sigma_S\mid P)=-\int u\,d\sigma_S\ge0.
\]
Here the inequality follows by applying convexity to $x\log x$;
its strict convexity makes equality equivalent to $\rho=1$
almost everywhere. The Nash entropy $\int f_Z\,d\sigma_Z-n$
therefore satisfies the exact identity
\begin{equation}\label{ii:lim:eq:entropy-volume}
 \Nash_Z=\log\frac{V_S}{(4\pi)^{n-m}}-(n-m)
  -\Ent\!\left(\sigma_S\,\middle|\,V_S^{-1}dV_S\right)
 \le\log\frac{V_S}{(4\pi)^{n-m}}-(n-m).
\end{equation}
Equality holds exactly when $u=0$, in which case the metric is
K\"ahler--Einstein. The locally Lipschitz potential on compact $S$
makes all these integrals finite. 

\begin{proposition}\label{ii:lim:prop:dini}
Assume that the limiting Nash entropy at every fixed-fibre
limiting point over $B_1$ is the same constant $\Theta$. For $0<r<T$,
where $r$ denotes backward time and $q\in K_0$, define
\[
 d\mu_{T-r}^q=(4\pi r)^{-n}e^{-f_{q,r}}\dV_{g(T-r)},\qquad
 \Nash_q(r)=\int_Xf_{q,r}\,d\mu_{T-r}^q-n.
\]
Then
\begin{equation}\label{ii:lim:eq:dini}
 \lim_{r\searrow0}\sup_{q\in K_0}|\Nash_q(r)-\Theta|=0.
\end{equation}
For arbitrary $q_i\in K_0$ and $\tau_i\searrow0$, every
$\Fconv$-subsequential limit of the rescaled flows with measures
$\mu_{T+\tau_i a}^{q_i}$ is a shrinking soliton with distinguished
Nash entropy $\Theta$.
\end{proposition}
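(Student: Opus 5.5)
The plan is a Dini-type argument on the compact parameter set $K_0$, followed by Bamler's entropy rigidity for the moving limits. For each fixed $q$, the Nash entropy $\Nash_q(r)$ of the conjugate heat flow $\mu^q$ from Proposition~\ref{ii:lim:prop:heat} is nonincreasing in backward time $r$. Indeed, $\mu^q$ is a limit of point conjugate heat kernels, and monotonicity of pointed Nash entropy passes to the limit at fixed $t<T$. Hence $\Nash_q(r)$ increases as $r\searrow0$. Its limit is the limiting Nash entropy at the limiting point $\mu^q$, which by hypothesis equals $\Theta$ for every $q\in K_0$.

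The first step is continuity of $q\mapsto\Nash_q(r)$ at each fixed $r$. Proposition~\ref{ii:lim:prop:heat} gives $C^\infty(X)$-continuity in $q$ of the density $v_t^q$ at fixed $t=T-r<T$. The densities are smooth and positive on the compact manifold $X$, with lower bounds uniform for $q$ in compact sets by the heat-duality representation against a smooth positive kernel. Therefore $f_{q,r}=-\log\bigl((4\pi r)^nv_t^q\bigr)$ and $\Nash_q(r)$ depend continuously on $q$.

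With continuity in hand, the functions $\Theta-\Nash_q(r)\ge0$ are continuous on compact $K_0$, monotone in $r$, and tend pointwise to zero. Dini's theorem, applied along any sequence $r_j\searrow0$ and sandwiched by monotonicity as in Proposition~\ref{ii:prop:terminal}, gives \eqref{ii:lim:eq:dini}.

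For the moving limits, take $q_i\in K_0$ and $\tau_i\searrow0$, and pass to an $\Fconv$-subsequential limit by Bamler's compactness theory. The moment bound \eqref{ii:lim:eq:moment} is uniform in $q$, so the variances of the rescaled measures at time $-1$ stay bounded and the limit is a metric flow pair over $(-\infty,0)$. The rescaled Nash entropy at scale $-a$ equals $\Nash_{q_i}(-\tau_ia)$. Fix a compact range $a\in[-A,-A^{-1}]$. By \eqref{ii:lim:eq:dini} these entropies converge to $\Theta$ uniformly in $i$, because $\tau_iA\to0$.

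Nash entropy is continuous under $\Fconv$ convergence, using the uniform scalar lower bound and the Type I bound $\tau|R|\le C$ from Theorem~\ref{ii:thm:main}. So the limit has Nash entropy identically $\Theta$ on every scale. Bamler's rigidity then shows that the limit is a shrinking soliton with distinguished entropy $\Theta$; this is the equality case of monotonicity, as in \cite[Theorem~2.37]{BamlerStructure}.

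I expect the main obstacle to be the passage of entropy through the $\Fconv$ limit for moving basepoints, since the semicontinuity direction must be checked. I would first try to reduce to Bamler's statement that limits of conjugate heat flows of uniformly bounded variance with entropy constant on scales are solitons. That statement is applied exactly as for tangent flows, with the uniform moment bound replacing the fixed-point hypothesis.
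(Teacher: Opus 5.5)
Your proposal is correct and follows the paper's proof. Both arguments rest on the same four ingredients: continuity of $\Nash_q(r)$ in $q$ at fixed scale (from the $C^\infty$ dependence of the densities in Proposition~\ref{ii:lim:prop:heat}), monotonicity in scale, compactness of $K_0$ (your Dini argument is exactly the paper's subsequence argument $\Nash_{q_\infty}(r)\le\liminf_i\Nash_{q_i}(b\tau_i)\le\Theta$), and parabolic invariance combined with Bamler's entropy convergence and soliton rigidity. One correction: what lets the entropy pass to the $\Fconv$ limit is the uniform Nash-entropy (non-collapsing) lower bound supplied by the original compact flow, not the scalar lower bound or the Type~I bound $\tau|R|\le C$ that you cite.
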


\begin{proof}
At each fixed $r>0$, Proposition~\ref{ii:lim:prop:heat} gives
continuous $C^\infty$ dependence of the positive probability density
on $q$, hence continuity of $\Nash_q(r)$.
Given $q_i\in K_0$ and $\tau_i\searrow0$, pass to a subsequence
with $q_i\to q_\infty\in K_0$. For fixed $b>0$ and $0<r<T$,
we have $b\tau_i<r$ for large $i$, and Nash monotonicity
\cite[Theorem~2.37]{BamlerStructure} gives
\[
 \Nash_{q_i}(r)\le\Nash_{q_i}(b\tau_i)\le\Theta.
\]
By continuity at fixed scale,
\[
 \Nash_{q_\infty}(r)
 \le\liminf_i\Nash_{q_i}(b\tau_i)
 \le\limsup_i\Nash_{q_i}(b\tau_i)\le\Theta.
\]
Letting $r\searrow0$ proves $\Nash_{q_i}(b\tau_i)\to\Theta$.
Since every sequence has such a subsequence, compactness also
gives the uniform convergence in \eqref{ii:lim:eq:dini}.

By parabolic invariance, the rescaled entropy at backward time $b$
is $\Nash_{q_i}(b\tau_i)$. The original compact flow supplies
the uniform entropy lower bound
\cite[equation~(4.34) and Proposition~4.35]{BamlerStructure}.
Entropy convergence and soliton rigidity
\cite[Theorems~2.14 and~2.18]{BamlerStructure} therefore imply
that every subsequential limit is a shrinking soliton with
Nash entropy $\Theta$.
\end{proof}

We apply \cite[Section~3]{Splitting} to the moving soliton of
Proposition~\ref{ii:lim:prop:dini}. Use
$g_i(a)=\tau_i^{-1}g(T+\tau_i a)$ and
$\mu_{i,a}=\mu_{T+\tau_i a}^{q_i}$. After passing to a subsequence,
$q_i\to q_\infty$ in a common regular product chart. Set
\begin{equation}\label{ii:lim:eq:moving-maps}
 \mathcal B_i=\tau_i^{-1/2}(w-w(q_i))\circ\Phi.
\end{equation}
Equations~\eqref{ii:lim:eq:SQ} and \eqref{ii:lim:eq:basemoment}
give uniform Lipschitz, second-moment and full-rank Gram bounds;
the original flow supplies the entropy lower bounds at all centres
used in regular convergence.

To check centering, choose a base cutoff $\zeta$ supported in
$B_1$ and equal to one near $K_0$. Extend
$\psi_i=\Phi^*(\zeta(w-w(q_i)))$ by zero and put
$\widetilde{\mathcal B}_i=\tau_i^{-1/2}\psi_i$.
The unscaled base $C^2$ norms are uniform, so Schwarz gives
$|\Delta_{g(t)}\psi_i|\le C$. Heat duality, first for point
kernels and then for their limits, gives, for every fixed $b>0$, componentwise
\[
 \int_X\psi_i\,d\mu_{T-b\tau_i}^{q_i}
 =\int_{T-b\tau_i}^{T}\!\int_X
        \Delta_{g(t)}\psi_i\,d\mu_t^{q_i}\,dt,
 \qquad
 \left|\int_X\widetilde{\mathcal B}_i
                   \,d\mu_{T-b\tau_i}^{q_i}\right|
 \le Cb\sqrt{\tau_i}.
\]
Here the limiting pole value is zero. Since $|\psi_i|^2\le C\Phi^*\rho_{q_i}$,
\eqref{ii:lim:eq:basemoment} also gives
\[
 \int_X|\widetilde{\mathcal B}_i|^2\,d\mu_{T-b\tau_i}^{q_i}\le Cb,
 \qquad
 \int_{\{|\widetilde{\mathcal B}_i|>R\}}
       |\widetilde{\mathcal B}_i|\,d\mu_{T-b\tau_i}^{q_i}
 \le Cb/R.
\]
Bounded truncation followed by $R\to\infty$ passes the first
moments to the limit. The limiting map is therefore centered;
placing the Euclidean origin at the Gaussian center removes its
affine constant.

The sharp kernel Poincar\'e inequality
\cite[Lemmas~3.3--3.4]{Splitting}, applied to each limiting measure
and rescaled to time $-1$, gives, for every smooth real test function $u$,
\[
 \Var_{\mu_{i,-1}}u\le
       2\int_X|\nabla u|_{g_i(-1)}^2\,d\mu_{i,-1}.
\]
It passes to compact regular test functions on the moving soliton.
The point-kernel diagonal below and the entropy bound verify the
inputs of \cite[Proposition~3.2, equations~(3.3)--(3.12)]{Splitting};
exact self-similarity gives the spatial inverse-radius estimate
and hence the cutoffs of \cite[Lemma~3.1]{Splitting}.
These are precisely the inputs, together with the soliton
identities, for the $1/2$-eigenspace argument in
\cite[Theorem~3.1 and Propositions~3.4--3.5]{Splitting}:
compactness of the holomorphic maps and the sharp inequality above
give a full-rank parallel limiting map. The intrinsic product
theorem \cite[Theorem~3.3]{Splitting} extends the splitting across
the normal singular set. Thus $Z=\C^m\times S$, and the limit of
\eqref{ii:lim:eq:moving-maps} is an invertible constant linear map
composed with the projection.

For the point-kernel formulations of \cite[Section~2]{Splitting}
and \cite[Section~2.2, setting~(I)]{HZ}, choose approximating poles
at $T_i<T$ with $(T-T_i)/\tau_i\to0$. By a diagonal choice their
scaled $W_1$ error at the latest time of each compact negative
interval tends to zero; backward contraction controls the whole
interval. The time translation to the limiting-measure rescalings
is $o(1)$, so the limit and its regular convergence agree.

\section{Proof of Theorem 1.2}\label{ii:sec:curvature}

The Einstein assumption now provides the rigidity needed to
identify the residual factor. The Ding functional has a lower
bound, while the error caused by the horizontal geometry tends
to zero in fibrewise variance. It follows that each fixed fibre
has an Einstein tangent. The entropy of this tangent determines
all moving limits, and K-polystable separatedness identifies
their residuals with the original fibre. Smooth convergence then
yields the full curvature and intrinsic diameter estimates.

\subsection{Entropy and identification of the limiting fibre}\label{ii:subsec:einstein}

The induced flow on a fibre differs from its intrinsic
K\"ahler--Ricci flow by a horizontal determinant term. Its vanishing
variance is enough to recover the part of Ding dissipation needed
here: there are times at which the intrinsic Ricci potential tends
to zero. The corresponding tangent is Einstein and attains the
entropy bound determined by the fibre volume. Equality in this
bound forces all moving residuals to be Einstein, and the
anticanonical degeneration identifies their complex structure.

Assume that $F$ admits a K\"ahler--Einstein metric $\omega_F$,
normalized by $\Ric(\omega_F)=\omega_F$, and put $g_F=2g_{\omega_F}$.
Then $\Ric(g_F)=\tfrac12g_F$. We use the normalized fibre metrics $h_q(s)$, Ricci potentials
$f_q$, determinant terms $D_q$, and probabilities $\nu_{q,s}$
of Section~\ref{ii:lim:sec:geometry}. The estimates give
\[
 |f_q|\le K,\qquad
 B_q(s):=\Var_{\nu_{q,s}}D_q(s)\longrightarrow0,
\]
by Theorem~\ref{ii:thm:relative} and
\eqref{ii:lim:eq:Dvariance}. The real normalized fibre volume is
$V=(4\pi)^{n-m}(-K_F)^{n-m}/(n-m)!$.

Fix $q$, set $h_0=\omega_F$, and write
$h_q(s)=h_0+\ddc\varphi(s)$ and $V_0=\int_Fh_0^{n-m}$.
By \eqref{ii:lim:eq:forced-flow},
$\dot\varphi=f_q+D_q+c(s)$.
The Aubin--Mabuchi energy and its first variation are
\cite[Section~5.1]{DarvasRubinstein}
\[
 E(\varphi)=\frac1{(n-m+1)V_0}
       \sum_{j=0}^{n-m}\int_F\varphi\,h_q^j\wedge h_0^{n-m-j},
 \qquad \dot E=\int_F\dot\varphi\,d\nu_{q,s}.
\]
Define
\[
 \Ding(\varphi)=-E(\varphi)
       -\log\left(V_0^{-1}\int_Fe^{-\varphi}h_0^{n-m}\right).
\]
Since $E(\varphi+c)=E(\varphi)+c$, the functional is unchanged
by adding constants. It is bounded below because $F$ admits a
K\"ahler--Einstein metric \cite[Theorem~7.1]{DarvasRubinstein}.

\begin{lemma}\label{ii:lem:ding}
Put $A_q(s)=\int_Ff_q^2\,d\nu_{q,s}$ and
$B_q(s)=\Var_{\nu_{q,s}}D_q$. If $|f_q|\le K$, then
\begin{equation}\label{ii:eq:ding-ineq}
 \frac{d}{ds}\Ding(\varphi(s))
 \le-\frac12e^{-K}A_q(s)+\frac12e^{3K}B_q(s).
\end{equation}
There is a sequence $s_i\to\infty$ with $A_q(s_i)\to0$.
\end{lemma}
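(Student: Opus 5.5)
We compute the derivative of the Ding functional along $\dot\varphi=f_q+D_q+c(s)$. Because $\Ding$ is invariant under constants, $c(s)$ drops out. By the first variation of $E$,
\[
 \frac{d}{ds}E=\int_F\dot\varphi\,d\nu_{q,s}.
\]
For the second term, the normalization of $f_q$ in \eqref{ii:lim:eq:fibre-f} gives $e^{-\varphi}h_0^{n-m}=\kappa(s)\,e^{-f_q}h_q^{n-m}$ with a spatial constant $\kappa(s)$. Indeed, both sides have $\ddc\log$ equal to $\Ric(h_q)-h_q+\ddc f_q$ minus the same quantity, hence their ratio is constant on compact $F$. So the derivative of the logarithm term is $-\int\dot\varphi\,d\mu_{q,s}$, where $d\mu_{q,s}=e^{-f_q}d\nu_{q,s}$ is a probability measure. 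Hence
\[
 \frac{d}{ds}\Ding=-\int(f_q+D_q)\,d\nu_{q,s}+\int(f_q+D_q)\,d\mu_{q,s}
 =\int(f_q+D_q)(e^{-f_q}-1)\,d\nu_{q,s}.
\]

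Next we split this into an $f$-term and a $D$-term. The pure $f$-term is $\int f_q(e^{-f_q}-1)\,d\nu\le -e^{-K}\int f_q^2\,d\nu=-e^{-K}A_q$, by the mean value theorem applied to $x(e^{-x}-1)\le -e^{-K}x^2$ for $|x|\le K$.

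For the $D$-term, $\int(e^{-f_q}-1)\,d\nu=0$, so we may replace $D_q$ by $D_q-\bar D_q$. Cauchy--Schwarz then gives
\[
 \left|\int (D_q-\bar D_q)(e^{-f_q}-1)\,d\nu\right|\le B_q^{1/2}\Bigl(\int(e^{-f_q}-1)^2d\nu\Bigr)^{1/2}\le B_q^{1/2}e^{K}A_q^{1/2},
\]
using $|e^{-x}-1|\le e^K|x|$. Young's inequality $e^KA^{1/2}B^{1/2}\le \frac12e^{-K}A+\frac12e^{3K}B$ yields \eqref{ii:eq:ding-ineq}.

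For the sequence, $\Ding(\varphi(s))$ is bounded below by \cite[Theorem~7.1]{DarvasRubinstein}, and $B_q(s)\to0$ by \eqref{ii:lim:eq:Dvariance}. Suppose instead that $A_q(s)\ge\delta>0$ for all $s\ge s_0$. Choosing $s_0$ larger so that $e^{3K}B_q\le\frac12e^{-K}\delta$, we get $\frac{d}{ds}\Ding\le-\frac14e^{-K}\delta$ for $s\ge s_0$, so $\Ding\to-\infty$, a contradiction. Hence $\liminf_{s\to\infty}A_q=0$, which produces the sequence $s_i$.

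The main delicate step is the constant-ratio identity relating $e^{-\varphi}h_0^{n-m}$ to $e^{-f_q}h_q^{n-m}$. It depends on $\Ric(h_0)=h_0$ and on the potentials' normalization; checking it carefully is what lets the variation of the log term be expressed through $\mu_{q,s}$. A second point to verify is that $\varphi(s)$ is smooth in $s$, which holds for each finite $s$ since the flow is smooth before $T$.
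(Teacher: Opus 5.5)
Your proposal is correct and follows the paper's proof essentially line for line. The steps coincide: the same first-variation formula $\frac{d}{ds}\Ding=\int_F(f_q+D_q)(e^{-f_q}-1)\,d\nu_{q,s}$, the same elementary bounds, and the same Cauchy--Schwarz and Young steps with identical constants. Your pluriharmonic-ratio argument is equivalent to the paper's explicit formula for $f_q$.

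The only difference is in the final step. You argue by contradiction that $A_q\ge\delta$ on a tail would force $\Ding\to-\infty$, whereas the paper integrates the inequality and shows that the time averages of $A_q$ tend to zero. Both give the required sequence.
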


\begin{proof}
The exponentially normalized intrinsic Ricci potential is
\[
 f_q=\log\frac{h_q^{n-m}}{h_0^{n-m}}+\varphi
       +\log\left(V_0^{-1}\int_Fe^{-\varphi}h_0^{n-m}\right).
\]
Thus $e^{-f_q}d\nu_{q,s}=e^{-\varphi}h_0^{n-m}/\int_Fe^{-\varphi}h_0^{n-m}$.
Put $\overline D_q=\int_FD_q\,d\nu_{q,s}$.
Using \eqref{ii:lim:eq:forced-flow} and $\int(e^{-f_q}-1)d\nu_{q,s}=0$,
\begin{align*}
 \dot\Ding
 &=-\int_F\dot\varphi\,d\nu_{q,s}
      +\int_F\dot\varphi e^{-f_q}\,d\nu_{q,s}\\
 &=\int_F(f_q+D_q)(e^{-f_q}-1)\,d\nu_{q,s}\\
 &=-\int_F f_q(1-e^{-f_q})\,d\nu_{q,s}
   +\int_F(D_q-\overline D_q)(e^{-f_q}-1)\,d\nu_{q,s}.
\end{align*}
For $|x|\le K$,
\[
 x(1-e^{-x})=x^2\int_0^1e^{-tx}\,dt\ge e^{-K}x^2,
 \qquad |e^{-x}-1|\le e^K|x|.
\]
Writing $A=A_q(s)$ and $B=B_q(s)$, Cauchy--Schwarz gives
\[
 \left|\int_F(D_q-\overline D_q)(e^{-f_q}-1)\,d\nu_{q,s}\right|
 \le e^K\sqrt{AB}.
\]
Young's inequality gives
\[
 e^K\sqrt{AB}
 =(e^{-K/2}\sqrt A)(e^{3K/2}\sqrt B)
 \le\tfrac12e^{-K}A+\tfrac12e^{3K}B.
\]
Combining these estimates with the negative term
$-e^{-K}A$ proves \eqref{ii:eq:ding-ineq}. If $\Ding\ge-L$,
integration gives
\[
 \frac{e^{-K}}2\int_{s_0}^{S}A_q(s)\,ds
 \le\Ding(\varphi(s_0))+L
        +\frac{e^{3K}}2\int_{s_0}^{S}B_q(s)\,ds.
\]
Fix $\epsilon>0$ and choose
$S_\epsilon$ so that $B_q(s)\le\epsilon$ for $s\ge S_\epsilon$,
using \eqref{ii:lim:eq:Dvariance}. Then, for $S>S_\epsilon$,
\[
 \frac1{S-s_0}\int_{s_0}^SB_q(s)\,ds
 \le\frac{\int_{s_0}^{S_\epsilon}B_q(s)\,ds}{S-s_0}
       +\epsilon\frac{S-S_\epsilon}{S-s_0}.
\]
First let $S\to\infty$ and then $\epsilon\to0$. Dividing the
integrated Ding inequality by $S-s_0$ therefore yields
\[
 \frac1{S-s_0}\int_{s_0}^SA_q(s)\,ds\longrightarrow0.
\]
In particular, for every integer $j$ there is $s_j\ge j+s_0$
with $A_q(s_j)<1/j$. Otherwise $A_q\ge1/j$ on that whole tail,
contradicting the vanishing average. These are the required times.
\end{proof}

\begin{proposition}\label{ii:prop:identify-KE}
Every fixed or moving Type-I-scale limit under the
K\"ahler--Einstein fibre hypothesis has smooth time-$-1$ slice
holomorphically isometric to
$(\C^m\times F,g_{\mathrm{Euc}}+g_F)$.
The convergence on regular spacetime includes the metrics and
complex structures; after a constant invertible complex-linear
change of base coordinates, the magnified base maps converge to
the Euclidean projection.
\end{proposition}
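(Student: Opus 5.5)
The proof has four steps: first show that a fixed-fibre tangent is Einstein, then transfer this to moving sequences by entropy rigidity, then identify the residual with $F$ by K-polystable separatedness, and finally upgrade to smooth convergence.

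\emph{Step 1: a fixed-fibre tangent is Einstein.} Fix $q$. Lemma~\ref{ii:lem:ding} gives times $s_i\to\infty$ with $A_q(s_i)\to0$. Set $\tau_i=e^{-s_i}$ and pass to a subsequence along which the rescaled flows converge to a tangent $\C^m\times S$, as in Section~\ref{ii:lim:sec:geometry}. By \eqref{ii:lim:eq:residual-potential}, the intrinsic potentials $f_{q}(s_i)$ converge smoothly on regular fibre graphs to $u_S$, and they are uniformly bounded. The no-loss-of-volume integral convergence argument then gives
\[
 \int_S u_S^2\,d\nu_S=\lim_i A_q(s_i)=0 .
\]
Hence $u_S=0$ and $S$ is a singular K\"ahler--Einstein $\Q$-Fano variety with $V_S=V$. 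Equality in \eqref{ii:lim:eq:entropy-volume} shows that the Nash entropy of this tangent equals
\[
 \Theta_{\max}=\log\frac{V}{(4\pi)^{n-m}}-(n-m).
\]
Nash entropy at a fixed limiting point is independent of the tangent sequence, since it is the limit of a monotone quantity. So the limiting entropy at every fixed fibre is $\Theta_{\max}$, and every fixed-fibre tangent along any sequence has this entropy.

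\emph{Step 2: moving limits are split Einstein solitons.} The limiting entropy is the same constant $\Theta_{\max}$ at all fixed fibres over $B_1$. Proposition~\ref{ii:lim:prop:dini} therefore applies: every moving limit is a shrinking soliton with entropy $\Theta_{\max}$. The discussion following that proposition shows it splits as $\C^m\times S'$, and Proposition~\ref{ii:lim:prop:soliton-static} gives $V_{S'}=V$. The identity \eqref{ii:lim:eq:entropy-volume} then forces $\Ent(\sigma_{S'}\mid P)=0$, i.e.\ $u_{S'}=0$. So every fixed or moving residual is K\"ahler--Einstein.

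\emph{Step 3: identifying $S'$ with $F$.} Proposition~\ref{ii:lim:prop:flat} gives a $\Q$-Gorenstein degeneration of $F$ to $S'$. Both $F$ and $S'$ carry K\"ahler--Einstein metrics and are K-polystable, by Berman's theorem in the klt setting. Separatedness of K-polystable degenerations (Blum--Xu, or the uniqueness of K-polystable limits in $\Q$-Gorenstein families) then gives $S'\cong F$. Uniqueness of K\"ahler--Einstein metrics up to automorphisms (Bando--Mabuchi, Berndtsson) identifies $\omega_{S'}$ with $\omega_F$ after a biholomorphism. With the real normalization $g=2g_\omega$ at time $-1$, this is $g_F$.

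\emph{Step 4: smooth convergence.} Since the time-$-1$ slice is smooth, self-similarity makes the whole limiting flow smooth. Bamler's theory (\cite[Theorems~2.5 and~2.14]{BamlerStructure}) gives smooth convergence on the regular spacetime, which is now everything. Convergence of complex structures and of the magnified base maps, after a constant invertible complex-linear change of base coordinates, is \cite[Proposition~3.7]{Splitting} together with the centering argument for moving sequences at the end of Section~\ref{ii:lim:sec:geometry}.

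I expect Step 3 to be the main obstacle. The delicate points are making sure the analytic K\"ahler--Einstein structure on the klt limit $S'$ really yields K-polystability in the form that separatedness requires, and that the limit of the anticanonical embeddings lands in the same polarized family. My first attempt would be to combine Proposition~\ref{ii:lim:prop:flat} with the Li--Wang--Xu separatedness theorem for K-polystable $\Q$-Fano degenerations. If that does not apply directly, I would fall back on an argument via orbit closures: the Hilbert point of $F$ is polystable and $[S']$ lies in its orbit closure, and I would compare the Futaki/Chow data to force the orbit to be closed.
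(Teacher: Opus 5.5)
Your Steps 1--3 follow the paper's proof closely. Ding dissipation gives times with $A_q(s_i)\to0$, so the residual potential vanishes and the fixed-fibre entropy is $\log\frac{V}{(4\pi)^{n-m}}-(n-m)$. Proposition~\ref{ii:lim:prop:dini} and the entropy--volume identity then force every moving residual to be K\"ahler--Einstein. Finally, Berman's theorem and Blum--Xu separatedness, applied to the family of Proposition~\ref{ii:lim:prop:flat}, give $S'\cong F$. Step 3 is not the obstacle you expect. That family already satisfies \cite[Definition~2.1]{BlumXu}, so neither Li--Wang--Xu nor an orbit-closure argument is needed.

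The gap is at the start of Step 4, where you write ``since the time-$-1$ slice is smooth'' and ``the regular spacetime, which is now everything.'' Step 3 gives you only two things:
\begin{enumerate}
\item the normal complex space $S'$ is biholomorphic to $F$;
\item via Berndtsson's uniqueness for bounded-potential K\"ahler--Einstein currents, the limiting current is a pullback of $\omega_F$.
\end{enumerate}
These are statements about the complex-analytic structure and a current. Bamler's regular set is defined metrically: it is where the limit is locally a smooth Ricci flow, and it is where the convergence is smooth. Nothing you have shown rules out a nonempty metric singular set, of codimension at least four, sitting inside the smooth complex manifold $F$. In that case you would not have smooth convergence on all of $\C^m\times S'$. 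That everywhere-smooth convergence is exactly what Lemma~\ref{ii:lem:whole-fibre} needs, since it takes a convergence embedding on a neighbourhood of $\overline B(0,2a)\times S_*$, and the curvature bound depends on it in turn.

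The paper closes this step with the equality of the metric and complex-analytic regular loci, \cite[Theorem~B(iii)]{HZ}. Since $S'\cong F$ has no complex singularities, the metric singular set is empty and the limiting metric is smooth everywhere. Only after that is Bando--Mabuchi, which concerns smooth metrics, used to identify the metric with $\omega_F$. You need to insert this input between the identification $S'\cong F$ and your Step 4. Alternatively, supply an equivalent argument showing that every point of the limit is regular, for example removability of the codimension-four set for the heat kernels followed by $\varepsilon$-regularity.
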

\begin{proof}
\emph{Step 1: a K\"ahler--Einstein tangent at each fixed fibre.}
Fix $q$ and take the times from Lemma~\ref{ii:lem:ding}.
The compactness, convergence and volume statements of
Proposition~\ref{ii:lim:prop:soliton-static} give a residual $S$
and regular fibre graphs along a subsequence. By
\eqref{ii:lim:eq:residual-potential}, the pulled-back intrinsic
potentials converge smoothly to the normalized soliton potential:
$f_q(s_i)\to u_S$. For $U\Subset\Reg S$, with fibre graph $U_i$,
\[
 \frac1V\int_Uu_S^2\,dV_S
 =\lim_{i\to\infty}\int_{U_i}f_q(s_i)^2\,d\nu_{q,s_i}
 \le\lim_{i\to\infty}A_q(s_i)=0.
\]
Thus $u_S=0$. Passing
$\Ric(h_q(s_i))+\ddc f_q(s_i)=h_q(s_i)$ to the regular limit gives
$\Ric(\omega_S)=\omega_S$.
The tangent-entropy identity
\cite[Theorem~2.37]{BamlerStructure}, together with
\eqref{ii:lim:eq:entropy-volume}, shows that the limiting entropy
at every fixed-fibre point is
\begin{equation}\label{ii:eq:Ntop}
 \Nash_*:=\log\frac{V}{(4\pi)^{n-m}}-(n-m)
          =\log\frac{(-K_F)^{n-m}}{(n-m)!}-(n-m).
\end{equation}

\emph{Step 2: every moving residual is K\"ahler--Einstein.}
Proposition~\ref{ii:lim:prop:dini}, applied with $\Theta=\Nash_*$,
and the splitting argument in Subsection~\ref{ii:lim:sec:moving}
give split shrinking limits for arbitrary moving sequences.
Their residuals are compact and have volume $V$ by
Proposition~\ref{ii:lim:prop:soliton-static}. Put
$d\nu_S=V^{-1}dV_S$ and $d\sigma_S=e^{-u_S}d\nu_S$.
The entropy identity \eqref{ii:lim:eq:entropy-volume} becomes
\[
 \Nash_*=\log\frac{V}{(4\pi)^{n-m}}-(n-m)
               -\Ent(\sigma_S\mid\nu_S)
          =\Nash_*-\Ent(\sigma_S\mid\nu_S).
\]
Hence the relative entropy is zero, so $\sigma_S=\nu_S$ and
$u_S=0$. Every such residual is therefore K\"ahler--Einstein.

\emph{Step 3: identification with the original fibre.}
Lemma~\ref{ii:lim:lem:qfano} makes $S$ a $\Q$-Fano variety with
full anticanonical mass. Both $S$ and $F$ are K-polystable by
\cite[Theorem~1.1]{Berman}. Compare the $\Q$-Gorenstein family
in Proposition~\ref{ii:lim:prop:flat} with $F\times C$.
They are isomorphic away from $0$ and satisfy
\cite[Definition~2.1]{BlumXu}. K-polystable separatedness
\cite[Theorem~1.1(2)]{BlumXu} gives $S\cong F$.
Equality of the metric and complex-analytic regular loci
\cite[Theorem~B(iii)]{HZ} makes the limiting metric smooth
everywhere. Bando--Mabuchi uniqueness \cite{BandoMabuchi}
identifies it with $\omega_F$, up to a holomorphic automorphism.
The regular spacetime and base-map convergence are the statements
of Subsection~\ref{ii:lim:sec:moving}.
\end{proof}

\subsection{Full curvature and intrinsic fibre diameter}\label{ii:subsec:transfer}

It remains to transfer the smooth limiting geometry to the whole
original fibre. A priori, regular convergence only describes the
regions captured by the convergence maps. When the limiting fibre
is smooth and compact, the implicit function theorem gives a
compact graph inside the exact original fibre. This graph is both
open and closed, so connectedness makes it the whole fibre. The
curvature and diameter estimates then follow by applying this
observation to arbitrary moving sequences.

\begin{lemma}\label{ii:lem:whole-fibre}
Let $\Phi:X\to Y$ satisfy the hypotheses of
Theorem~\ref{ii:thm:main}. Suppose that a moving Type-I-scale
limit is the smooth product
$\C^m\times S_*$, where $S_*$ is compact and connected, and that
the normalized base maps converge to its projection. Then one
smooth convergence region contains the entire original central
fibre for all large indices. Its ambient curvature is uniformly
bounded there in the rescaled metrics, and the induced normalized
fibre metrics converge smoothly to the metric of $S_*$ after
diffeomorphisms.
\end{lemma}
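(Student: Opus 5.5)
The plan is the open-and-closed argument sketched in the preamble of this subsection. Fix a moving sequence $(q_i,\tau_i)$ whose rescaled flows $g_i(a)=\tau_i^{-1}g(T+\tau_i a)$ converge to $\C^m\times S_*$ at time $-1$. Let $\mathcal B_i$ be the magnified base maps of \eqref{ii:lim:eq:moving-maps}, which converge to $\operatorname{pr}_{\C^m}$ after a fixed linear change. Since $S_*$ is smooth and compact, the whole slice is regular. Smooth convergence on compact subsets of the regular spacetime therefore gives, for a fixed $\eps>0$ and all large $i$, open embeddings
\[
 \Psi_i:B_{2\eps}(0)\times S_*\to X
\]
onto open sets of $X$. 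Along these embeddings $\Psi_i^*g_i(-1)$ and $\Psi_i^*J$ converge smoothly to the product data, and $\mathcal B_i\circ\Psi_i\to\operatorname{pr}_{\C^m}$ in $C^\infty$. The first step is the implicit function theorem, applied exactly as in the construction of fibre graphs before Lemma~\ref{ii:lim:lem:compact}. The zero set of $\mathcal B_i\circ\Psi_i$ in $B_{2\eps}(0)\times S_*$ is, for large $i$, a single smooth graph $\{z=z_i(y)\}$ over \emph{all} of $S_*$, with $z_i\to0$ smoothly. Its image $G_i=\Psi_i(\operatorname{graph} z_i)$ is contained in $F_{q_i}$, since $\mathcal B_i=0$ exactly there. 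Moreover $G_i$ is a compact embedded complex submanifold of dimension $n-m$.

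Next I would show $G_i=F_{q_i}$. Compactness of $S_*$ makes $G_i$ compact, hence closed in $F_{q_i}$. For openness, $G_i$ and $F_{q_i}$ are both smooth submanifolds of the same dimension $n-m$, and $G_i\subset F_{q_i}$. So $G_i$ is open in $F_{q_i}$ by invariance of domain; equivalently, $G_i=F_{q_i}\cap\Psi_i(B_{2\eps}(0)\times S_*)$ is the intersection of $F_{q_i}$ with an open set. Since $F_{q_i}\cong F$ is connected and $G_i\neq\emptyset$, we get $G_i=F_{q_i}$. Thus the single convergence region $\Psi_i(B_{2\eps}(0)\times S_*)$ contains the whole central fibre. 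The curvature conclusion then follows at once: $|\Rm(g_i(-1))|$ is bounded on the image by smooth convergence to the flat-times-compact product. The same holds at nearby times in the spacetime version. Finally, the induced metrics are $\widetilde\omega(s_i)|_{F_{q_i}}$ up to the fixed factor convention. Pulling them back by $y\mapsto\Psi_i(z_i(y),y)$ gives diffeomorphisms $S_*\to F_{q_i}$, and these pullbacks converge smoothly to $g_{S_*}$ because $z_i\to0$ and the metrics converge.

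The main obstacle is the first step: guaranteeing one embedding defined on a neighbourhood of the entire compact slice $\{0\}\times S_*$. This must also be uniform enough that the implicit function theorem gives a graph over all of $S_*$ rather than over an exhausting family of domains. I would handle it by using the identification of the smooth-convergence locus with the regular locus from \cite[Theorem~9.31]{BamlerCompactness} and \cite[Theorems~2.5, 2.14]{BamlerStructure}. Since $\Reg=\C^m\times S_*$ here, a compact neighbourhood of $\{0\}\times S_*$ lies in one stage of the exhaustion. A secondary point is connectedness of the exact fibre, which is immediate for a Fano bundle; in the local setting it follows from the regular product locus.
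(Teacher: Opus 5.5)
Your proposal is correct and follows essentially the paper's argument. Both take a single convergence embedding over a neighbourhood of $\overline B(0,2a)\times S_*$ from Bamler's regular-convergence theorems, use a uniform implicit-function (contraction) argument to produce a graph over all of $S_*$ lying in $F_{q_i}$, and close with an open-and-closed argument using connectedness of $F_{q_i}$, the pullback identity for the fibre metrics, and the curvature bound; the only cosmetic difference is that you get openness via invariance of domain (or as $F_{q_i}$ intersected with an open set), whereas the paper uses the full-rank zero-set description.
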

\begin{proof}
At time $-1$, choose a convergence embedding $\Psi_i$ defined on a
neighborhood of $\overline B(0,2a)\times S_*$, for a fixed $a>0$,
using \cite[Theorems~2.5 and~2.14]{BamlerStructure}.
After a constant linear change of base variables,
\[
 \mathcal B_i\circ\Psi_i\longrightarrow
                   \operatorname{pr}_{\C^m}
 \quad\hbox{smoothly}.
\]
Put $b_i(z,y)=\mathcal B_i(\Psi_i(z,y))$. On the fixed compact
product, taking derivatives in the underlying real coordinates,
smooth convergence gives
\[
 \sup_y|b_i(0,y)|\longrightarrow0,\qquad
 \sup_{(z,y)}\|D_zb_i(z,y)-I\|\longrightarrow0.
\]
For large $i$, we have $\sup_y|b_i(0,y)|\le a/2$ and
$\sup\|I-D_zb_i\|\le1/2$. Thus, for $|z|\le a$,
\begin{align*}
 |z-b_i(z,y)|
 &\le |b_i(0,y)|+
       \int_0^1\|I-D_zb_i(tz,y)\|\,|z|\,dt\\
 &\le a/2+|z|/2\le a.
\end{align*}
The map $z\mapsto z-b_i(z,y)$ is therefore a contraction of the
closed ball into itself. It has a unique fixed point $u_i(y)$ in that
ball. The parameter-dependent implicit function theorem gives
smooth dependence on $y$ and, by differentiating
$b_i(u_i(y),y)=0$, smooth convergence to zero. Thus
\[
 u_i\to0\text{ in }C^\infty,\qquad
 \mathcal B_i\bigl(\Psi_i(u_i(y),y)\bigr)=0.
\]
For example, the first derivative is
\[
 du_i=-(D_zb_i)^{-1}D_yb_i
             \quad\hbox{along }(u_i(y),y),
\]
and higher derivatives follow inductively because
$(D_zb_i)^{-1}$ is uniformly bounded. Uniqueness makes
the constructions in different coordinate charts of $S_*$ agree.

Let $j_i(y)=(u_i(y),y)$ and $\iota_i=\Psi_i\circ j_i$.
Its image $\Sigma_i$ is a compact embedded submanifold of the exact
fibre $F_{q_i}$. Indeed, $\mathcal B_i=0$ means
$w\circ\Phi=w(q_i)$ in this product chart, hence $\Phi=q_i$.
The differential of $\mathcal B_i\circ\Psi_i$ has real rank $2m$
along the graph. Its zero set therefore has real dimension $2(n-m)$,
the same as $F_{q_i}$, and the local zero-set description shows
that $\Sigma_i$ is open in $F_{q_i}$. Compactness makes it closed.
As $F_{q_i}$ is connected and $\Sigma_i$ is nonempty,
\[
                         \Sigma_i=F_{q_i}.
\]
Thus $\iota_i:S_*\to F_{q_i}$ is a diffeomorphism. Writing
$\widehat g_i=\Psi_i^*g_i(-1)$, one has the exact pullback identity
\[
 \iota_i^*(g_i(-1)|_{F_{q_i}})=j_i^*\widehat g_i.
\]
Since $j_i\to j_\infty$, $j_\infty(y)=(0,y)$, and
$\widehat g_i\to g_{\mathrm{Euc}}+g_{S_*}$ smoothly on a fixed
larger neighborhood, the right-hand side converges smoothly to
$g_{S_*}$. The same ambient convergence gives a uniform curvature
bound on the graph, hence on the whole original fibre. No assertion
that the graph diffeomorphisms are biholomorphic is needed.
\end{proof}

\begin{proposition}\label{ii:prop:diameter-limit}
Under the hypotheses of Theorem~\ref{ii:thm:einstein}, put
$d_F=\diam(F,g_F)$. Then
\begin{equation}\label{ii:eq:diameter-limit}
 \sup_{q\in Y}\left|
 \frac{\diam_{(F_q,g(t)|_{F_q})}F_q}{\sqrt{T-t}}-d_F
 \right|\longrightarrow0\qquad(t\nearrow T).
\end{equation}
\end{proposition}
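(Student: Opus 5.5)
We argue by contradiction and compactness, combining Proposition~\ref{ii:prop:identify-KE} with Lemma~\ref{ii:lem:whole-fibre}. First reduce to $T=1$ by the parabolic rescaling used at the end of the proof of Theorem~\ref{ii:thm:main}; the ratio $\diam/\sqrt{T-t}$ is invariant. If \eqref{ii:eq:diameter-limit} failed, there would be $\epsilon>0$, points $q_i\in Y$ and times $t_i\nearrow1$ with
\[
 \left|\tau_i^{-1/2}\diam_{(F_{q_i},g(t_i)|_{F_{q_i}})}F_{q_i}-d_F\right|\ge\epsilon,
 \qquad \tau_i=1-t_i .
\]
Since $Y$ is compact, we pass to a subsequence with $q_i\to q_\infty$ in a common product chart.

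Next, choose $x_i\in F_{q_i}$ and consider the rescaled flows $g_i(a)=\tau_i^{-1}g(1+\tau_i a)$ with the measures $\mu^{q_i}_{1+\tau_i a}$. Proposition~\ref{ii:lim:prop:dini}, applied with $\Theta=\Nash_*$ from Step~1 of Proposition~\ref{ii:prop:identify-KE}, and the moving splitting argument give a further subsequence with an $\Fconv$-limit. By Proposition~\ref{ii:prop:identify-KE}, its time-$(-1)$ slice is holomorphically isometric to $\C^m\times F$ with metric $g_{\mathrm{Euc}}+g_F$, and after a constant linear change of base coordinates the magnified base maps $\mathcal B_i$ converge to the projection. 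This is exactly the hypothesis of Lemma~\ref{ii:lem:whole-fibre} with $S_*=F$, which is compact and connected. That lemma supplies diffeomorphisms $\iota_i:F\to F_{q_i}$ such that $\iota_i^*\bigl(g_i(-1)|_{F_{q_i}}\bigr)\to g_F$ smoothly on $F$.

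Time $a=-1$ corresponds to $t=1-\tau_i$, which is not $t_i$. To correct this, evaluate at $a=-1$ with scale $\tau_i$ itself: then $t=1-\tau_i=t_i$ and $g_i(-1)=\tau_i^{-1}g(t_i)$ exactly. Intrinsic diameter is continuous under smooth convergence of Riemannian metrics on the fixed compact manifold $F$, because the metrics are uniformly bi-Lipschitz with constants tending to one. Hence
\[
 \tau_i^{-1/2}\diam_{(F_{q_i},g(t_i)|_{F_{q_i}})}F_{q_i}
 =\diam\bigl(F,\iota_i^*(g_i(-1)|_{F_{q_i}})\bigr)\longrightarrow d_F,
\]
which contradicts the choice of $\epsilon$.

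The main obstacle is making the moving limits legitimate: every sequence $(q_i,\tau_i)$ must admit a subsequence whose limit is the split Einstein product with convergence regions and base maps as required, and this must hold uniformly in $q$. The uniformity is supplied by the entropy argument of Proposition~\ref{ii:lim:prop:dini}, which relies on the fixed-fibre entropy being the constant $\Nash_*$ established in Step~1. One further point needs checking. Lemma~\ref{ii:lem:whole-fibre} gives smooth convergence of the induced fibre metrics at the single time slice $a=-1$, which is the only slice used here. We also confirm that the convergence there is time-wise, i.e.\ smooth on the regular spacetime, which holds by \cite[Theorems~2.5 and~2.14]{BamlerStructure} since the limit is everywhere regular.
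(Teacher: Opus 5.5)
Your proposal is correct and follows the paper's own proof. You argue by contradiction and compactness, use Proposition~\ref{ii:prop:identify-KE} and Lemma~\ref{ii:lem:whole-fibre} to obtain diffeomorphisms $\iota_i:F\to F_{q_i}$ with $\iota_i^*\bigl(\tau_i^{-1}g(t_i)|_{F_{q_i}}\bigr)\to g_F$ smoothly, and conclude from the $(1\pm\varepsilon_i)$-comparison of metrics that the rescaled intrinsic diameters converge to $d_F$. The only slip is your remark that $a=-1$ ``is not $t_i$'': since $t_i=1-\tau_i$ by definition, $g_i(-1)=\tau_i^{-1}g(t_i)$ holds from the outset and no correction is needed.
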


\begin{proof}[Proof of Theorem~\ref{ii:thm:einstein} and
Proposition~\ref{ii:prop:diameter-limit}]
Let $q_i\in Y$, $t_i\nearrow T$, $\tau_i=T-t_i$, and
$g_i(a)=\tau_i^{-1}g(T+\tau_i a)$.
Proposition~\ref{ii:prop:identify-KE} and
Lemma~\ref{ii:lem:whole-fibre} give, after a subsequence,
diffeomorphisms $\iota_i:F\to F_{q_i}$ such that
\[
 \iota_i^*\bigl(\tau_i^{-1}g(t_i)|_{F_{q_i}}\bigr)
       \longrightarrow g_F\quad\hbox{smoothly},\qquad
 \sup_{F_{q_i}}|\Rm(g_i(-1))|\le C.
\]
The covariant curvature tensor scales by $c$ under $g\mapsto cg$,
whereas its squared norm uses four inverse metrics. Hence
\[
 |\Rm(cg)|_{cg}^2=c^2c^{-4}|\Rm(g)|_g^2,\qquad
 \tau_i\sup_{F_{q_i}}|\Rm(g(t_i))|\le C.
\]
Failure of \eqref{ii:eq:typeI} would yield a sequence for which
the last quantity diverges, a contradiction.

For some $\varepsilon_i\searrow0$, the pulled-back fibre metrics
lie between $(1-\varepsilon_i)g_F$ and $(1+\varepsilon_i)g_F$.
Taking lengths, infima over curves and suprema over endpoints gives
\[
 \sqrt{1-\varepsilon_i}\,d_F
 \le\tau_i^{-1/2}\diam_{(F_{q_i},g(t_i)|_{F_{q_i}})}F_{q_i}
 \le\sqrt{1+\varepsilon_i}\,d_F.
\]
A sequence violating uniform convergence would contradict these
inequalities, proving \eqref{ii:eq:diameter-limit}. Since $n-m>0$,
$d_F>0$, so the two-sided bound \eqref{ii:eq:intrinsic-diameter}
follows near $T$. On a remaining compact time interval, smooth
metric equivalence and a finite trivializing cover give uniform
curvature and positive lower and finite upper intrinsic diameter
bounds. Enlarging $C$ proves assertions \textup{(1)} and \textup{(2)}
of Theorem~\ref{ii:thm:einstein}.

The residual potential vanishes by the proof of
Proposition~\ref{ii:prop:identify-KE}. Thus every limiting flow is
\eqref{ii:eq:einstein-model}; its normalization is
\[
 \partial_a\bigl(g_{\mathrm{Euc}}+(-a)g_F\bigr)
 =-g_F=-2\Ric\bigl(g_{\mathrm{Euc}}+(-a)g_F\bigr).
\]
The regular spacetime convergence is smooth on the entire limiting
product. If central points $x_i\in F_{q_i}$ are marked, write
$x_i=\iota_i(y_i)$ and take a subsequence $y_i\to y_\infty$ in
compact $F$. Their preimages in the product are
$p_i=(u_i(y_i),y_i)\to(0,y_\infty)$. A diagonal exhaustion of
compact negative-time intervals, followed by precomposition with
spatial diffeomorphisms tending smoothly to the identity and
sending $(0,y_\infty)$ to $p_i$, gives pointed smooth
Cheeger--Gromov convergence. The complex structures converge under
the same maps, since the limiting product is the smooth K\"ahler limit
identified in Proposition~\ref{ii:prop:identify-KE}.
The base-map conclusion follows from
Proposition~\ref{ii:prop:identify-KE}, proving
Theorem~\ref{ii:thm:einstein}\textup{(3)}.
\end{proof}

For the compact-local assertions in Remark~\ref{ii:rem:local},
apply the same argument to sequences above a fixed compact subset
of the regular product locus. A convergent base subsequence lies
in one larger product chart. All analytic and limiting inputs are
uniform there, and the graph construction stays inside this chart.
The resulting estimates have constants depending on the compact
subset.


\begin{thebibliography}{99}

\bibitem{Stacks}
The Stacks Project Authors, \emph{The Stacks Project}.
Perfect direct images and base change: Tag~0A1G,
\url{https://stacks.math.columbia.edu/tag/0A1G}.
Normality under flat morphisms: Tag~0C22,
\url{https://stacks.math.columbia.edu/tag/0C22}.
Weil divisors on normal schemes: Tag~0EBM,
\url{https://stacks.math.columbia.edu/tag/0EBM}.

\bibitem{BamlerCompactness}
R. H. Bamler,
\emph{Compactness theory of the space of super Ricci flows},
Invent. Math. \textbf{233} (2023), 1121--1277.
Preprint: \href{https://arxiv.org/abs/2008.09298v2}{arXiv:2008.09298v2}.

\bibitem{BamlerHeat}
R. H. Bamler, \emph{Entropy and heat kernel bounds on a Ricci flow background},
\href{https://arxiv.org/abs/2008.07093v3}{arXiv:2008.07093v3} (2021).

\bibitem{BamlerStructure}
R. H. Bamler,
\emph{Structure theory of non-collapsed limits of Ricci flows},
\href{https://arxiv.org/abs/2009.03243v2}{arXiv:2009.03243v2}.

\bibitem{BandoMabuchi}
S. Bando and T. Mabuchi,
\href{https://doi.org/10.2969/aspm/01010011}{\emph{Uniqueness of Einstein K\"ahler metrics modulo connected group
actions}}, in \emph{Algebraic Geometry, Sendai, 1985}, Adv. Stud.
Pure Math. \textbf{10} (1987), 11--40.

\bibitem{BedfordTaylor}
E. Bedford and B. A. Taylor,
\href{https://doi.org/10.1007/BF02392348}{\emph{A new capacity for plurisubharmonic functions}},
Acta Math. \textbf{149} (1982), 1--40.

\bibitem{Berman}
R. J. Berman,
\emph{K-polystability of $\Q$-Fano varieties admitting
K\"ahler--Einstein metrics}, Invent. Math. \textbf{203} (2016),
973--1025; \href{https://arxiv.org/abs/1205.6214v3}{arXiv:1205.6214v3}.

\bibitem{BlumXu}
H. Blum and C. Xu,
\emph{Uniqueness of K-polystable degenerations of Fano varieties},
Ann. of Math. (2) \textbf{190} (2019), 609--656;
\href{https://arxiv.org/abs/1812.03538v2}{arXiv:1812.03538v2}.

\bibitem{CCHZ}
C. Cifarelli, R. J. Conlon, M. Hallgren and J. Zhang,
\emph{Finite time singularities of the Ricci flow on compact
K\"ahler surfaces are of Type I},
\href{https://arxiv.org/abs/2609.16733}{arXiv:2609.16733} (2026).

\bibitem{CDSIII}
X. Chen, S. Donaldson and S. Sun,
\emph{K\"ahler--Einstein metrics on Fano manifolds. III: Limits as
cone angle approaches $2\pi$ and completion of the main proof},
J. Amer. Math. Soc. \textbf{28} (2015), 235--278;
\href{https://arxiv.org/abs/1302.0282}{arXiv:1302.0282}.

\bibitem{Chow}
W. L. Chow,
\href{https://doi.org/10.2307/2372375}{\emph{On compact complex analytic varieties}},
Amer. J. Math. \textbf{71} (1949), no.~4, 893--914.

\bibitem{CollinsSzekelyhidi}
T. C. Collins and G. Sz\'ekelyhidi, \emph{The twisted K\"ahler--Ricci flow},
J. Reine Angew. Math. \textbf{716} (2016), 179--205.
Preprint: \href{https://arxiv.org/abs/1207.5441v2}{arXiv:1207.5441v2}.

\bibitem{DarvasRubinstein}
T. Darvas and Y. A. Rubinstein,
\emph{Tian's properness conjectures and Finsler geometry of the
space of K\"ahler metrics}, J. Amer. Math. Soc. \textbf{30} (2017),
347--387; \href{https://arxiv.org/abs/1506.07129v2}{arXiv:1506.07129v2}.

\bibitem{Demailly}
J.-P. Demailly, \emph{Complex Analytic and Differential Geometry},
online book, version of June 21, 2012.
\href{https://www-fourier.univ-grenoble-alpes.fr/~demailly/manuscripts/agbook.pdf}{Institut Fourier manuscript}.

\bibitem{DemaillyL2}
J.-P. Demailly,
\emph{$L^2$ estimates for the $\bar\partial$-operator on complex
manifolds}, author lecture notes.
\href{https://www-fourier.univ-grenoble-alpes.fr/~demailly/manuscripts/estimations_l2.pdf}{Institut Fourier lecture notes}.

\bibitem{FuZhang}
X. Fu and S. Zhang, \href{https://doi.org/10.1007/s00209-017-1881-4}{\emph{The K\"ahler--Ricci flow on Fano bundles}},
Math. Z. \textbf{286} (2017), no. 3--4, 1605--1626.
Preprint: \href{https://arxiv.org/abs/1610.03366v2}{arXiv:1610.03366v2}.

\bibitem{GilbargTrudinger}
D. Gilbarg and N. S. Trudinger, \href{https://doi.org/10.1007/978-3-642-61798-0}{\emph{Elliptic Partial Differential Equations of Second Order}},
second edition, Classics in Mathematics, Springer-Verlag, Berlin, 2001.

\bibitem{HZ}
M. Hallgren and J. Zhang,
\emph{Singular K\"ahler--Ricci shrinkers and polarized Fano fibrations},
\href{https://arxiv.org/abs/2605.25213v2}{arXiv:2605.25213v2}, 31 August 2026.


\bibitem{Splitting}
W. Jian and J. Song,
\emph{Finite-time singularities of the K\"ahler--Ricci flow on Fano
bundles}, \href{https://arxiv.org/abs/2609.02878v1}{arXiv:2609.02878v1} (2026).

\bibitem{JSfb3}
W. Jian and J. Song,
\emph{Finite-time singularities of the K\"ahler--Ricci flow on Fano
bundles III}, note primarily generated by AI, available at \href{https://sites.google.com/site/jiansongrutgers/home/research}{https://sites.google.com/site/jiansongrutgers/home/research}


\bibitem{JST}
W. Jian, J. Song and G. Tian,
\href{https://doi.org/10.1007/s00222-026-01449-x}{\emph{Finite time singularities of the K\"ahler--Ricci flow}},
Invent. Math. (2026), published online 15 September 2026.
Preprint: \href{https://arxiv.org/abs/2310.07945v2}{arXiv:2310.07945v2}
(the numbering used here).



\bibitem{PSSWPositive}
D. H. Phong, J. Song, J. Sturm and B. Weinkove,
\href{https://doi.org/10.1007/s00222-008-0134-x}{\emph{The K\"ahler--Ricci flow with positive bisectional curvature}},
Invent. Math. \textbf{173} (2008), no.~3, 651--665.

\bibitem{SSW}
J. Song, G. Sz\'ekelyhidi and B. Weinkove, \href{https://doi.org/10.1093/imrn/rnr265}{\emph{The K\"ahler--Ricci flow on projective bundles}},
Int. Math. Res. Not. IMRN \textbf{2013} (2013), no. 2, 243--257.
Preprint: \href{https://arxiv.org/abs/1107.2144}{arXiv:1107.2144}.

\bibitem{SongTianSurfaces}
J. Song and G. Tian,
\href{https://doi.org/10.1007/s00222-007-0076-8}{\emph{The K\"ahler--Ricci flow on surfaces of positive Kodaira dimension}},
Invent. Math. \textbf{170} (2007), no.~3, 609--653.

\bibitem{SongTianMeasures}
J. Song and G. Tian,
\href{https://doi.org/10.1090/S0894-0347-2011-00717-0}{\emph{Canonical measures and K\"ahler--Ricci flow}},
J. Amer. Math. Soc. \textbf{25} (2012), no.~2, 303--353.

\bibitem{SongTianSingularities}
J. Song and G. Tian,
\href{https://doi.org/10.1007/s00222-016-0674-4}{\emph{The K\"ahler--Ricci flow through singularities}},
Invent. Math. \textbf{207} (2017), no.~2, 519--595.

\bibitem{TianKE}
G. Tian,
\href{https://doi.org/10.1002/cpa.21578}{\emph{K-stability and K\"ahler--Einstein metrics}},
Comm. Pure Appl. Math. \textbf{68} (2015), 1085--1156;
\href{https://arxiv.org/abs/1211.4669}{arXiv:1211.4669}.
\href{https://doi.org/10.1002/cpa.21612}{Corrigendum}, Comm. Pure Appl. Math. \textbf{68} (2015),
no.~11, 2082--2083.

\bibitem{TianZhangActa}
G. Tian and Z. Zhang,
\href{https://doi.org/10.1007/s11511-016-0137-1}{\emph{Regularity of K\"ahler--Ricci flows on Fano manifolds}},
Acta Math. \textbf{216} (2016), no.~1, 127--176.

\bibitem{TianZhuFlow}
G. Tian and X. Zhu,
\href{https://doi.org/10.1090/S0894-0347-06-00552-2}{\emph{Convergence of K\"ahler--Ricci flow}},
J. Amer. Math. Soc. \textbf{20} (2007), no.~3, 675--699.

\bibitem{TianZhuFlowII}
G. Tian and X. Zhu,
\href{https://doi.org/10.1515/crelle.2012.021}{\emph{Convergence of the K\"ahler--Ricci flow on Fano manifolds}},
J. Reine Angew. Math. \textbf{678} (2013), 223--245.

\bibitem{XuZhang}
T. Xu and Z. Zhang,
\emph{Finite time singularities of collapsing K\"ahler Ricci flow on ruled surfaces},
\href{https://arxiv.org/abs/2609.01442}{arXiv:2609.01442} (2026).

\bibitem{XuZhangSurfaces}
T. Xu and Z. Zhang,
\emph{Collapsed finite time singularities of the K\"ahler--Ricci
flow on complex surfaces are of Type I},
\href{https://arxiv.org/abs/2609.18834}{arXiv:2609.18834} (2026).

\bibitem{Ye}
R. Ye,
\emph{The logarithmic Sobolev and Sobolev inequalities along the Ricci flow},
Comm. Math. Stat. \textbf{3} (2015), no.~1, 1--36.
Preprint: \href{https://arxiv.org/abs/0707.2424v4}{arXiv:0707.2424v4},
\emph{The logarithmic Sobolev inequality along the Ricci flow}.

\bibitem{ZhouZhu}
X. Zhou and L. Zhu,
\emph{Optimal $L^2$ extension of sections from subvarieties in weakly
pseudoconvex manifolds}, Pacific J. Math. \textbf{309} (2020),
475--510; \href{https://arxiv.org/abs/1909.08820v1}{arXiv:1909.08820v1}.

\end{thebibliography}
\end{document}